\documentclass[12pt,a4paper]{amsart}
\usepackage[top=35mm, bottom=32mm, left=30mm, right=30mm]{geometry}
\usepackage{newtxtext,newtxmath}
\usepackage{mathrsfs}

\usepackage{url}
\usepackage[all]{xy}
\usepackage[colorlinks=true,citecolor=blue]{hyperref}

\theoremstyle{plain}

\newtheorem{thm}{Theorem}[section]
\newtheorem{lem}[thm]{Lemma}
\newtheorem{prop}[thm]{Proposition}
\newtheorem{cor}[thm]{Corollary}

\theoremstyle{definition}

\newtheorem{rem}[thm]{Remark}
\newtheorem{ques}{Question}

\newcommand{\Z}{{\mathbb{Z}_+}}
\newcommand{\N}{\mathbb{N}}

\newcommand{\ep}{\varepsilon}
\newcommand{\ra}{\rightarrow}

\newcommand{\T}{\mathbb{T}}
\newcommand{\R}{\mathbb{R}}

\DeclareMathOperator{\spann}{span}

\newcommand{\dd}{\mathop{}\!\mathrm{d}}

\newcommand{\I} {\mathcal I}

\begin{document}
\title{Bounded complexity, mean equicontinuity and discrete spectrum}
\author[W. Huang, J. Li, J. Thouvenot, L. Xu and X. Ye]
{Wen Huang, Jian Li, Jean-Paul Thouvenot, Leiye Xu and Xiangdong Ye}

\date{\today}

\address[W.~Huang, L.~Xu and X.~Ye]
{Wu Wen-Tsun Key Laboratory of Mathematics, USTC, Chinese Academy of Sciences and Department of Mathematics, University of Science and
Technology of China, Hefei, Anhui 230026, China}
\email{wenh@mail.ustc.edu.cn, leoasa@mail.ustc.edu.cn, yexd@ustc.edu.cn}

\address[J.~Li]{Department of Mathematics, Shantou University,
Shantou, Guangdong 515063, P.R. China}
\email{lijian09@mail.ustc.edu.cn}

\address[J.~Thouvenot]{Universit\'e Paris 6-LPMA, Case courrier 188, 4 Place Jussieu, 75252 Paris Cedex 05, France}
\email{jeanpaul.thouvenot@gmail.com}

\subjclass[2010]{Primary: 37A35; Secondary 37B05}

\keywords{Bounded complexity, equicontinuity, mean equicontinuity, discrete spectrum}

\begin{abstract}
We study dynamical systems which have bounded complexity with respect to
three kinds metrics: the Bowen metric $d_n$, the max-mean metric $\hat{d}_n$
and the mean metric $\bar{d}_n$, both in topological dynamics and ergodic theory.

It is shown that a topological dynamical system $(X,T)$ has bounded complexity with respect to $d_n$
(resp.\ $\hat{d}_n$) if and only if
it is equicontinuous (resp.\ equicontinuous in the mean).
However, we construct minimal systems which have  bounded complexity with respect to
$\bar{d}_n$ but not equicontinuous in the mean.

It turns out that an invariant  measure $\mu$ on $(X,T)$ has bounded complexity with respect to $d_n$
if and only if $(X,T)$ is $\mu$-equicontinuous. Meanwhile, it is shown that
$\mu$ has bounded complexity with respect to $\hat{d}_n$
if and only if  $\mu$ has bounded complexity with respect to $\bar{d}_n$
if and only if $(X,T)$ is $\mu$-mean equicontinuous if and only if it has
discrete spectrum.
\end{abstract}

\maketitle
\section{Introduction}
Throughout this paper,
by a topological dynamical system (t.d.s.\ for short)
we mean a pair $(X,T)$,
where $X$ is a compact metric space with a metric $d$
and $T$ is a continuous map from $X$ to itself.
Let $\mathcal{B}_X$ be the Borel $\sigma$-algebra on $X$
and $\mu$ be a probability measure on $(X,\mathcal{B}_X)$.
We say that $\mu$ is an invariant measure for $T$
if for every $B\in\mathcal{B}_X$, $\mu(T^{-1}B)=\mu(B)$.

Entropy is a very useful invariant to describe the complexity of a dynamical system
which measures the rate of the exponential growth of the orbits. For some simple systems
(for example dynamical systems with zero entropy) it is useful to consider the complexity function
itself. This kind of considerations can be traced back to the work by Morse and Hedlund, who studied the
complexity function of a subshift and proved that the boundedness of the function is equivalent to
the eventual periodicity of the system (for progress on the high dimensional analogue see \cite{CK}). In \cite{Fer}, Ferenczi studied  measure-theoretic complexity of
ergodic systems using $\alpha$-names of a partition and the Hamming distance. He proved that when the
measure is ergodic, the complexity function is bounded
if and only if the system has discrete spectrum (for the result dealing with non-ergodic case, see \cite{YU}). In \cite{Katok} Katok introduced a notion using the modified notion of spanning sets
with respect to an invariant measure $\mu$ and an error $\ep$, which can be used to define the complexity function.
In \cite{BHM}, Blanchard, Host and Maass studied topological complexity via the complexity function of an open cover and
showed that the complexity function is bounded for any open cover if and only if the system is equicontinuous.

Recently, in the investigation of the Sarnak's conjecture, Huang, Wang and Ye \cite{HWY} introduced the measure complexity of
an invariant  measure $\mu$ similar to the one introduced by Katok \cite{Katok}, by using the mean metric instead of the
Bowen metric (for discussion and results related to mean metric, see also \cite{LM,AR}).
They showed that
if an invariant measure has discrete spectrum, then the measure complexity with respect to this invariant measure is bounded. An
open question was posed as whether the converse statement holds. Motivated by this open question and
inspired by the discussions in \cite{Fer, Katok, BHM, HLY, LTY, F-1, Felip, HKM-2}, in this paper,
we study topological and measure-theoretic complexity via a sequence of metrics induced by a metric $d$,
namely the metrics $d_n,\hat d_n$ and $\bar d_n$.

\medskip
To be precise, for $n\in\mathbb{N}$, we define three  metrics on $X$ as follows. For $x,y\in X$, let
\[d_n(x,y)=\max\{d(T^ix,T^iy)\colon 0\le i\le n-1\},\    \bar{d}_n(x,y)= \frac{1}{n}\sum_{i=0}^{n-1}d(T^ix,T^iy) \] and
\[\hat{d}_n(x,y)= \max\Bigl\{\bar d_k(x,y)\colon 1\le k\le n\Bigr\}.\]

It is clear that for all $x,y\in X$,
\[d_n(x,y)\geq \hat{d}_n(x,y) \geq \bar{d}_n(x,y).\]

For $x\in X$, $\ep>0$ and a metric $\rho$ on $X$, let $B_{\rho}(x,\ep)=\{y\in X\colon \rho(x,y)<\ep\}.$
We say that  a dynamical system $(X,T)$ has
bounded topological complexity
with respect to a sequence of metrics $\{\rho_n\}$
if for every $\ep>0$ there exists a positive integer $C$
such that for each $n\in\N$ there are points
$x_1,x_2,\dotsc,x_m\in X$ with $m\le C$ satisfying
$X=\bigcup_{i=1}^m B_{\rho_n}(x_i,\ep)$.
In this paper  we will focus on the situation when $\rho_n=d_n$, $\hat d_n$ and $\bar d_n$.

We also study the measure-theoretic complexity of invariant measures. That is, for a given $\ep>0$
and an invariant measure $\mu$  we consider the measure complexity with respect to $\{\rho_n\}$ with $\rho_n=d_n,\hat d_n$ and $\bar d_n$, defined by
$$\min\{m\in\Z: \exists x_1,\ldots,x_m\in X, \mu(\cup_{i=1}^m B_{\rho_n}(x_i,\ep))>1-\ep\}.$$
As expected, the bounded complexity
of a topological dynamical system or a measure preserving system is related to various notions of equicontinuity.

It is shown that (see Theorem \ref{thm:T-equicontinuous-subset} and Theorem \ref{thm:strongly-mean-equi}) a topological dynamical system $(X,T)$ has bounded complexity with respect to $d_n$
(resp.\ $\hat{d}_n$) if and only if
it is equicontinuous (resp.\ equicontinuous in the mean).
At the same time, we construct minimal systems which have  bounded complexity with respect to
$\bar{d}_n$ but not equicontinuous in the mean, which are not uniquely ergodic or uniquely ergodic (see Proposition \ref{prop:key-example1} and  Proposition \ref{prop:key-example2}).

It turns out that an invariant  measure $\mu$ on $(X,T)$ has bounded complexity with respect to $d_n$
if and only if $(X,T)$ is $\mu$-equicontinuous (see Theorem \ref{thm-mu-equi}). Meanwhile, it is shown that
$\mu$ has bounded complexity with respect to $\hat{d}_n$
if and only if  $\mu$ has bounded complexity with respect to $\bar{d}_n$
if and only if $(X,T)$ is $\mu$-mean equicontinuous if and only if $(X,T)$ is $\mu$-equicontinuous in the mean
if and only if it has
discrete spectrum (see Theorem  \ref{thm:mu-eq-in-mean}, Theorem \ref{nov-5} and Theorem \ref{thm:mu-discrete-spectrum}).

\medskip
The structure of the paper is the following. In Section 2, we recall some basic notions which we will use in the paper. In Section 3,
we prove the topological results for systems with bounded complexity with respect to three kinds of metrics. In Section 4,
we consider the corresponding results in the measure-theoretical setting. In the Appendix we give some examples.

\medskip

\noindent {\bf Acknowledgments.}
We would like to thank Bryna Kra for the helpful discussion on the subject,
specially on Theorem \ref{thm:mu-discrete-spectrum} and allowing us to include an example (due to Cyr and Kra) in Appendix B.
We also would like to thank Nhan-Phu Chung for bringing \cite{VZP12} into our attention.
W. Huang was partially supported by NNSF of China (11431012,11731003),
J. Li (Corresponding author) was partially supported by NNSF of China (11771264) and NSF of Guangdong Province (2018B030306024),
L. Xu was partially supported by NNSF of China (11801538, 11871188)
and X. Ye was partially supported by NNSF of China (11431012)

\section{Preliminaries}

In this section we recall some notions and aspects of dynamical systems which will be used later.
\subsection{General notions}
In the article, the sets of integers, nonnegative integers
and natural numbers are denoted by $\mathbb{Z}$, $\mathbb{Z}_+$ and $\mathbb{N}$, respectively.
We use $\#(A)$ to denote the number of elements of a finite set $A$.

A t.d.s. $(X, T)$ is \emph{transitive} if for each pair of non-empty
open subsets $U$ and $V$, $N(U, V)=\{n \in \mathbb{Z}_+:
U\cap T^{-n}V \neq \emptyset\}$ is infinite; it is \emph{totally
transitive} if $(X, T^n)$ is transitive for each $n \in
\mathbb{N}$; and it is \emph{weakly mixing} if $(X \times X, T \times
T)$ is transitive. We say that $x \in X$ is a \emph{transitive point}
if its orbit $\mathrm{Orb}(x, T)=\{x, Tx, T^2x, \ldots\}$ is dense
in $X$. The set of transitive points is denoted by
$\mathrm{Trans}(X,T)$. It is well known that if $(X, T)$ is transitive,
then $\mathrm{Trans}(X,T)$ is a dense $G_{\delta}$ subset of $X$.

A t.d.s. $(X, T)$ is \emph{minimal} if $\mathrm{Trans}(X,T)=X$, i.e., it
contains no proper subsystems. A point $x \in X$ is called a \emph{minimal point} or \emph{almost periodic point} if
$(\overline{\mathrm{Orb}(x, T)}, T)$ is a minimal subsystem of $(X,
T)$.

\subsection{Equicontinuity and mean equicontinuity}
A t.d.s.\ $(X,T)$ is called \emph{equicontinuous} if for every $\ep>0$ there is a $\delta>0$ such that
whenever $x,y\in X$ with $d(x,y)<\delta$, $d(T^nx,T^ny)<\ep$ for $n=0,1,2,\dotsc$.
It is well known that a t.d.s.\ $(X,T)$ with $T$ being surjective is equicontinuous if and only if
there exists a compatible metric $\rho$ on $X$ such that $T$ acts on $X$ as an isometry, i.e.,
$\rho(Tx,Ty)=\rho(x,y)$ for any $x,y\in X$.
Moreover, a transitive equicontinuous system is conjugate to
a minimal rotation on a compact abelian metric group,
and $(X,T,\mu)$ has discrete spectrum, where $\mu$ is the unique normalized Haar measure on $X$.

When studying dynamical systems with discrete spectrum, Fomin~\cite{F51} introduced a notion
called \emph{stable in the mean in the sense of Lyapunov} or simply \emph{mean-L-stable}.
A t.d.s.\ $(X,T)$ is \emph{mean-L-stable}
if for every $\ep>0$, there is a $\delta>0$ such that $d(x,y)<\delta$
implies $d(T^nx,T^ny)<\ep$ for all $n\in\Z$ except a set of
upper density less than $\ep$.
Fomin proved that if a minimal system is mean-L-stable then it is uniquely ergodic.
Mean-L-stable systems are also discussed briefly by Oxtoby in~\cite{O52},
and he proved that each transitive mean-L-stable system is uniquely ergodic.
Auslander in~\cite{A59} systematically studied mean-L-stable systems, and provided new examples.
See Scarpellini \cite{S82} for a related work.
It was an open question whether every ergodic invariant measure on a mean-L-stable system has discrete spectrum~\cite{S82}.
This question was answered affirmatively by Li, Tu and Ye in \cite{LTY}.

A t.d.s.\ $(X,T)$ is called \emph{mean equicontinuous} (resp. \emph{equicontinuous in the mean}) if for every $\ep>0$,
there exists a $\delta>0$ such that whenever $x,y\in X$ with $d(x,y)<\delta$,
$\limsup_{n\to\infty} \bar d_n(x,y)<\ep$ (resp. $\bar d_n(x,y)<\ep$ for each $n\in\N$).
It is not hard to show that a dynamical system is mean equicontinuous if and only if it is mean-L-stable.
For works related to mean equicontinuity, we refer to \cite{LTY, DG16, Felip, J-G, Li}.
We remark that by the result in \cite{DG16}, a minimal null or tame system is mean equicontinuous.
We will show in this paper that a minimal system is mean equicontinuous if and only if it is equicontinuous in the mean
(for the proof for the general case, see \cite{QZ18}).

\subsection{$\mu$-equicontinuity and $\mu$-mean equicontinuity}
When studying the chaotic behaviors of dynamical systems, Huang, Lu and Ye \cite{HLY} introduced a notion which connects the equicontinuity
with respect to a subset or a measure.

Following \cite{HLY}, for a t.d.s.\ $(X,T)$, we say that a subset $K$ of $X$ is \emph{equicontinuous} if for every $\ep>0$,
there exists a $\delta>0$ such that $d(T^nx,T^ny)<\ep$ for all $n\in \Z$ and
all $x,y\in K$ with $d(x,y)<\delta$. For an invariant measure $\mu$ on $(X,T)$,
we say that $T$ is \emph{$\mu$-equicontinuous} if for any $\tau>0$
there exists a $T$-equicontinuous measurable subset $K$ of $X$ with $\mu(K)>1-\tau$.
It was shown in \cite{HLY} that if $(X,T)$ is $\mu$-equicontinuous and $\mu$ is ergodic
then $\mu$ has discrete spectrum. We note that $\mu$-equicontinuity was studied further in \cite{F-1}.

In the process to study mean equicontinuity, the above notions were generalized to
mean equicontinuity with respect to  an invariant measure by Garc{\'\i}a-Ramos in \cite{Felip}.
Particularly, he proved that for an ergodic invariant measure $\mu$,
$(X,T)$ is $\mu$-mean equicontinuous
if and only if $\mu$ has discrete spectrum.
For a different approach, see \cite{L16}.

\subsection{Hausdorff metric}
Let $K(X)$ be the hyperspace on $X$, i.e., the space of non-empty closed subsets of $X$ equipped with
the Hausdorff metric $d_H$ defined by
\[d_H(A,B)=\max\Bigl\{\max_{x\in A}\min_{y\in B} d(x,y),\ \max_{y\in B}\min_{x\in A}
d(x,y)\Bigr\}\ \text{for}\ A,B\in K(X).\]
As $(X,d)$ is compact, $(K(X),d_H)$ is also compact.
For $n\in\N$, it is easy to see that the map $X^n\to K(X)$,
$(x_1,\dotsc,x_n)\mapsto \{x_1,\dotsc,x_n\}$,
is continuous. Then $\{A\in K(X)\colon \#(A)\leq n\}$ is a closed subset of $K(X)$.

\subsection{Discrete spectrum}
Let $(X, T )$ be an invertible t.d.s.,
that is, $T$ is a homeomorphism on $X$.
Let $\mu$ be an invariant  measure on $(X,T)$ and
let $L^2(\mu) = L^2(X,\mathcal{B}_X,\mu)$ for short.
An \emph{eigenfunction } for $\mu$ is some non-zero
function $f\in  L^2(\mu)$ such that $Uf := f\circ T =\lambda f$
for some $\lambda \in \mathbb{C}$.
In this case, $\lambda$ is
called the \emph{eigenvalue} corresponding to $f$. It is easy to see every eigenvalue has norm one,
that is $|\lambda| = 1$. If $f\in L^2(\mu)$ is an eigenfunction, then $\{U^nf : n\in \mathbb{Z}\}$ is  precompact in $L^2(\mu)$,
that is the closure of $\{U^nf : n\in \mathbb{Z}\}$ is compact in $L^2(\mu)$.
Generally, we say that $f$ is \emph{almost periodic} if $\{U^nf : n\in \mathbb{Z}\}$ is precompact
in $L^2(\mu)$. It is well known that the set of all bounded almost periodic functions forms a
$U$-invariant and conjugation-invariant subalgebra of $L^2(\mu)$ (denoted by $A_c$). The set of all
almost periodic functions is just the closure of $A_c$ (denoted by $H_c$), and is also spanned by
the set of eigenfunctions.
The invariant measure $\mu$ is said  to have \emph{discrete spectrum} if $L^2(\mu)$
is spanned by the set of eigenfunctions, that is $H_c = L^2(\mu)$.
We remark that when $\mu$ is not
ergodic, the structure of a system $(X,T,\mu)$ with discrete spectrum can be very complicated, we refer to \cite{Kwi8, EN} and
the example we provide at the end of Section 4 for details.

\section{Topological dynamical systems with bounded topological complexity}
In this section we will study the topological complexity of dynamical systems
with respect to three kinds of metrics.

\subsection{Topological complexity with respect to  $\{d_n\}$}
Let $(X,T)$ be a t.d.s.
For $n\in\mathbb{N}$ and $x,y\in X$, define
\[d_n(x,y)=\max\{d(T^ix,T^iy)\colon i=0,1,\dotsc,n-1\}.\]
It is easy to see that for each $n\in\N$, $d_n$ is a metric on $X$ which is topologically equivalent to
the metric $d$.
Let $x\in X$ and $\ep>0$.
The open ball of centre $x$ and radius $\ep$ in the metric $d_n$ is
\[B_{d_n}(x,\ep)=\{y\in X\colon d_n(x,y)<\ep\}=\bigcap_{i=0}^{n-1}T^{-i}B(T^ix,\ep).\]
Let $K$ be a subset of $X$, $n\in\N$ and $\ep>0$.
A subset $F$ of $K$ is said to \emph{$(n,\ep)$-span $K$ with respect to $T$} if
for every $x\in K$ there exists $y\in F$ with $d_n(x,y)<\ep$, that is
\[K\subset \bigcup_{x\in F}B_{d_n}(x,\ep).\]
Let $\spann_K(n,\varepsilon)$ denote the smallest cardinality of any
$(n,\ep)$-spanning set for $K$ with respect to $K$, that is
\[\spann_K(n,\varepsilon)=
\min\biggl\{\#(F)\colon
F\subset K\subset \bigcup_{x\in F}B_{d_n}(x,\ep)\biggr\}.\]
We say that a subset $K$ of $X$ has \emph{bounded topological complexity
with respect to $\{d_n\}$}
if for every $\ep>0$ there exists a positive integer $C=C(\ep)$
such that $\spann_K(n,\varepsilon)\leq C$ for all $n\geq 1$.
If the whole set $X$ has bounded topological complexity
with respect to $\{d_n\}$,
we will say that the dynamical system $(X,T)$ has
the property.

We first show that a subset with bounded topological complexity
with respect to $\{d_n\}$
is equivalent to the equicontinuity property.

\begin{thm}\label{thm:T-equicontinuous-subset}
Let $(X,T)$ be a t.d.s.\ and $K\subset X$ be a compact set.
Then $K$ has bounded topological complexity
with respect to $\{d_n\}$ if and only if it is equicontinuous.
\end{thm}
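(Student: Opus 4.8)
The plan is to encode equicontinuity of $K$ through a single metric. For $x,y\in X$ put $d_\infty(x,y)=\sup_{i\ge 0}d(T^ix,T^iy)$; this is a metric on $X$, and since $d\le d_n\le d_\infty$ for every $n$, the statement ``$K$ is equicontinuous'' is literally the statement ``the identity map $(K,d)\to(K,d_\infty)$ is uniformly continuous''. With this reformulation the direction \emph{equicontinuous $\Rightarrow$ bounded complexity} is immediate: given $\ep>0$ choose $\delta>0$ from equicontinuity, cover the compact set $K$ by finitely many $d$-balls $B(z_1,\delta),\dots,B(z_m,\delta)$ with $z_1,\dots,z_m\in K$, and note that for any $x\in K$ we have $x\in B(z_i,\delta)$ for some $i$, whence $d(T^jx,T^jz_i)<\ep$ for all $j\ge 0$ and so $d_n(x,z_i)<\ep$; thus $\{z_1,\dots,z_m\}$ is an $(n,\ep)$-spanning set for $K$ for every $n$, giving $\spann_K(n,\ep)\le m$ for all $n$.

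For the converse, assume $K$ has bounded topological complexity with respect to $\{d_n\}$. I would first verify that $(K,d_\infty)$ is complete: a $d_\infty$-Cauchy sequence $(x_k)$ is $d$-Cauchy, so it $d$-converges to some $x\in K$ by compactness of $(K,d)$; then, given $\ep>0$, pick $N_0$ with $d_\infty(x_k,x_l)\le\ep$ for $k,l\ge N_0$, fix $k\ge N_0$ and any $i\ge 0$, and let $l\to\infty$ — continuity of the individual map $T^i$ yields $d(T^ix_k,T^ix)\le\ep$, so $d_\infty(x_k,x)\le\ep$ for all $k\ge N_0$, i.e. $x_k\to x$ in $d_\infty$. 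Next comes the substantial step, that $(K,d_\infty)$ is totally bounded. Fix $\ep>0$ and a bound $C=C(\ep)$ with $\spann_K(n,\ep)\le C$ for all $n$, and for each $n$ choose an $(n,\ep)$-spanning set $F_n\subset K$ with $\#(F_n)\le C$. Regarding the $F_n$ as points of the hyperspace $K(X)$, compactness of $(K(X),d_H)$ together with the fact (recorded in Section 2.4) that $\{A\in K(X)\colon \#(A)\le C\}$ is closed allows me to pass to a subsequence $F_{n_j}\to F$ in $d_H$, with $F\subset K$ and $\#(F)\le C$.

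I then claim $F$ is a $d_\infty$-$\ep$-net of $K$. Fix $x\in K$ and, for each $N$, let $A_N=\{z\in F\colon d_N(x,z)\le\ep\}$. This is nonempty: for $j$ large, $F_{n_j}$ is $(N,\ep)$-spanning, so there is $y_j\in F_{n_j}$ with $d_N(x,y_j)<\ep$; choosing $z_j\in F$ with $d(y_j,z_j)=\min_{z\in F}d(y_j,z)\le d_H(F_{n_j},F)\to 0$ and using that $F$ is finite, some single $z\in F$ occurs as $z_j$ for infinitely many $j$, and then the uniform continuity of $T^0,\dots,T^{N-1}$ forces $d_N(y_j,z)\to 0$ along that subsequence, so $d_N(x,z)\le\ep$, i.e. $z\in A_N$. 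Since $d_N\le d_{N+1}$, the $A_N$ are decreasing nonempty subsets of the finite set $F$, hence $\bigcap_N A_N\ne\emptyset$; any $z$ in this intersection satisfies $d_\infty(x,z)=\sup_N d_N(x,z)\le\ep$. Applying this with $\ep/2$ in place of $\ep$ produces, for every $\ep>0$, a finite open $d_\infty$-$\ep$-net of $K$, so $(K,d_\infty)$ is totally bounded.

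Finally, complete and totally bounded gives that $(K,d_\infty)$ is compact; since $d\le d_\infty$, the identity $(K,d_\infty)\to(K,d)$ is a continuous bijection from a compact space onto a Hausdorff space, hence a homeomorphism. Therefore the identity $(K,d)\to(K,d_\infty)$ is continuous, and being a map out of the compact metric space $(K,d)$ it is uniformly continuous, which is exactly the equicontinuity of $K$. I expect the main obstacle to be the total-boundedness step: one must pass to a Hausdorff limit $F$ of the spanning sets and show that this \emph{single} finite set $\ep$-spans $K$ simultaneously in \emph{all} the metrics $d_n$, and the point is to arrange this through the nested family $\{A_N\}_N$ (equivalently, a diagonal argument) rather than trying to pick the approximating centre uniformly in $N$ from the outset.
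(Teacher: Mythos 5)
Your proof is correct, and the converse direction is packaged genuinely differently from the paper's. The paper argues by contradiction: it fixes a witness to non-equicontinuity ($x_k\to x_0$ with $d_{m_k+1}(x_0,x_k)\ge\ep/2$), passes to a Hausdorff limit $F$ of the spanning sets at scale $\ep/6$, decomposes $K$ into the finitely many closed sets $K_j=\bigcap_n\{x\in K\colon d_n(x,z_j)\le\ep/6\}$, and pigeonholes the bad sequence into a single $K_j$ to reach a contradiction via the triangle inequality. You instead give a direct proof by introducing the sup-metric $d_\infty=\sup_i d(T^i\cdot,T^i\cdot)$ and showing $(K,d_\infty)$ is complete and totally bounded, hence compact, so that the continuous bijection $(K,d_\infty)\to(K,d)$ is a homeomorphism and its inverse is uniformly continuous --- which is exactly equicontinuity. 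The crucial technical device is the same in both arguments: take a Hausdorff limit $F$ of the $(n,\ep)$-spanning sets and exploit the monotonicity of $\{d_n\}$ to show that this single finite set $\ep$-spans $K$ in every $d_n$ simultaneously (your nested family $\{A_N\}$ plays the role of the paper's sets $K_j$, which are precisely the closed $d_\infty$-balls of radius $\ep/6$ about the points of $F$). What your route buys is a cleaner conceptual statement --- bounded complexity with respect to $\{d_n\}$ is exactly compactness of $K$ in the sup-metric --- and it avoids both the contradiction setup and the extraction of the witness sequence; what the paper's route buys is a self-contained elementary argument that transfers verbatim (as the authors note) to $\{\hat d_n\}$ in Theorem \ref{thm:strongly-mean-equi}, where your argument would need $\hat d_\infty=\sup_n\hat d_n$ in place of $d_\infty$ but otherwise also goes through, since the only properties used are that the metric sequence is increasing and dominates $d$.
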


\begin{proof}
($\Leftarrow$)
Fix $\varepsilon>0$.
By the definition of equicontinuity,
there exists $\delta>0$ such that $d(T^nx,T^ny)<\ep$ for all $n\in \Z$ and
all $x,y\in K$ with $d(x,y)<\delta$.
By the compactness of $K$, there exists a finite subset $F$ of $K$
such that $K\subset \bigcup_{x\in F} B(x,\delta)$.
Then $K\subset \bigcup_{x\in F} B_{d_n}(x,\ep)$ for all $n\geq 1$.
So $K$ has bounded topological complexity with respect to $\{d_n\}$.

\medskip

($\Rightarrow$)
Assume the contrary that $K$ is not equicontinuous.
There exists $\ep>0$
such that for any $k\geq 1$ there are $x_k,y_k\in K$
and $m_k\in\N$ such that $d(x_k,y_k)<\frac{1}{k}$ and $d(T^{m_k}x_k,T^{m_k}y_k)\geq \ep$.
Without loss of generality, we may assume that $x_k\to x_0$ as
$k\to\infty$.
Then we have $x_0\in K$ and $y_k\to x_0$ as $k\to\infty$.
For any $k\in\N$, by the triangle inequality, either
$d(T^{m_k}x_k,T^{m_k}x_0)\geq \frac{\ep}{2}$ or $d(T^{m_k}y_k,T^{m_k}x_0)\geq \frac{\ep}{2}$.
Without loss of generality, we always have $d(T^{m_k}x_k,T^{m_k}x_0)\geq \frac{\ep}{2}$ for all $k\in \N$.
Then $d_{m_k+1}(x_0,x_k)\geq \ep/2$ for all $k\in\N$.

As $K$ has bounded topological complexity with respect to $\{d_n\}$,
for the constant $\varepsilon/6$,
there exists $C>0$ such that for every $n\geq 1$
there exists a subset $F_n$ of $K$ with $\#(F_n)\leq C$
such that $K\subset \bigcup_{x\in F_n} B_{d_n}(x,\ep/6)$.
We view $\{F_n\}$ as a sequence in the hyperspace $K(X)$.
By the compactness of $K(X)$, there is a subsequence
$F_{n_i}\to F$ as $i\to\infty$ in the Hausdorff metric $d_H$.
As $F_n\subset K$ and $K$ is compact, we have $F\subset K$.
By the fact $\{A\in K(X)\colon  \#(A)\leq C\}$ is closed,
we have $\#(F)\leq C$.
For any $i\in\N$ and any $x\in K$,
there exists $z_{n_i}\in F_{n_i}$ such that $d_{n_i}(x,z_{n_i})<\ep/6$.
Without loss of generality, assume that $z_{n_i}\to z$
as $i\to\infty$.
Then $z\in F$.
As the sequence $\{d_n\}$ of metrics is increasing,
that is $d_{n}(u,v)\leq d_{n+1}(u,v)$
for all $u,v\in X$ and $n\in\N$,
we have  $d_{n_i}(x,z_{n_j})\le d_{n_j}(x,z_{n_j})<\ep$ for all $j\geq i$.
Letting $j$ go to infinity, we get $d_{n_i}(x,z)\leq \ep/6$.
This implies that
\[
K\subset \bigcup_{z\in F} \{x\in K \colon d_{n_i}(x,z)\leq \ep/6\}
\] for all $n_i$.
By the monotonicity of $\{d_n\}$, we have
\[
K\subset \bigcup_{z\in F} \{x\in K \colon d_{n}(x,z)\leq \ep/6\}
\] for all $n\in\N$.
Enumerate $F$ as $\{z_1,\dotsc,z_m\}$ and let
\[K_j=\bigcap_{n=1}^\infty\{x\in K\colon d_n(x,z_j)\leq \ep/6 \}\]
for $j=1,\dotsc,m$.
Then each $K_i$ is a closed set.
By the  monotonicity of $\{d_n\}$, we have
$K=\bigcup_{j=1}^m K_j$.

For the sequence $\{x_k\}$ in $K$,
passing to a subsequence if necessary
we assume that the sequence $\{x_k\}$ is in the same $K_{j}$.
As $K_{j}$ is closed, $x_0$ is also in $K_{j}$.
Note that for any $u,v\in K_{j}$ and any $n\geq 1$,
$d_n(u,v)\leq d_n(u,z_{j})+d_n(z_{j},v)\leq \ep/3$.
Particularly, we have $d_{m_k+1}(x_0,x_k)\leq \ep/3$
for any $k\in\N$, which is a contradiction.
\end{proof}

\begin{rem}
In the definition of  $(n,\varepsilon)$-spanning set $F$ of $K$,
we require $F$ to be a subset of $K$.
In fact we can define
\[\spann'_K(n,\varepsilon)=\min\{\#(F)\colon F\subset X
\text{ and }K\subset \bigcup_{x\in F}B_{d_n}(x,\ep)\}.\]
It is clear that $\spann_K(n,2\varepsilon)
\leq \spann'_K(n,\varepsilon)\leq \spann_K(n,\varepsilon)$.
So Proposition~\ref{thm:T-equicontinuous-subset} still holds
if in the definition of topological complexity
with respect to $\{d_n\}$
we replace $\spann_K(n,\varepsilon)$ by $\spann'_K(n,\varepsilon)$.
\end{rem}

\begin{cor}\label{cor:equicont-bounded}
A dynamical system $(X,T)$ is equicontinuous if and only if
for every $\varepsilon>0$
there exists  a positive integer $C$
such that $\spann_X(n,\varepsilon)\leq C$ for all $n\geq 1$.
\end{cor}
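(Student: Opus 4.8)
The plan is to specialize Theorem~\ref{thm:T-equicontinuous-subset} to the case $K=X$. Since $X$ is by hypothesis a compact metric space, it is in particular a compact subset of itself, so the theorem applies directly: $X$ has bounded topological complexity with respect to $\{d_n\}$ if and only if the subset $X$ is equicontinuous.

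Next I would observe that the two conditions occurring in the corollary are, verbatim, the two notions that Theorem~\ref{thm:T-equicontinuous-subset} relates. On the one hand, by the definition of bounded topological complexity, the subset $X$ of $X$ has bounded topological complexity with respect to $\{d_n\}$ precisely when for every $\ep>0$ there is a positive integer $C$ with $\spann_X(n,\ep)\le C$ for all $n\ge 1$, which is exactly the right-hand side of the corollary. On the other hand, unwinding the definition of an equicontinuous subset, the subset $X$ is equicontinuous when for every $\ep>0$ there is $\delta>0$ such that $d(T^nx,T^ny)<\ep$ for all $n\in\Z$ whenever $x,y\in X$ satisfy $d(x,y)<\delta$; since $\Z=\{0,1,2,\dotsc\}$, this is word-for-word the definition of $(X,T)$ being equicontinuous. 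Combining these two identifications with Theorem~\ref{thm:T-equicontinuous-subset} yields the stated equivalence.

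I expect no real obstacle: the corollary is a pure restatement of Theorem~\ref{thm:T-equicontinuous-subset} for the whole space. The only point deserving a line of care is the purely notational check that the index set $\Z=\mathbb{Z}_+$ used to define equicontinuity of a subset matches the index set $n=0,1,2,\dotsc$ used to define equicontinuity of a t.d.s., and that ``bounded topological complexity of the whole space'' was already defined to mean exactly the spanning-number bound appearing in the statement.
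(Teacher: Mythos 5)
Your proposal is correct and is exactly how the paper obtains this corollary: it is the special case $K=X$ of Theorem~\ref{thm:T-equicontinuous-subset}, with the definitions of ``equicontinuous subset'' and ``bounded topological complexity'' reducing verbatim to the two sides of the stated equivalence. The paper gives no separate proof, and (as its Remark following the corollary notes) only mentions an alternative route via the open-cover complexity result of Blanchard--Host--Maass, which you do not need.
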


\begin{rem} \label{rem:open-cover}
It is shown in \cite{BHM} that
the complexity defined by using the
open covers is bounded if and only if the system is equicontinuous.
In fact, we can prove Corollary \ref{cor:equicont-bounded}
by using this result and the
the fact that \cite[Theorem 7.7]{Wal}
if $\alpha$ is an open cover of $X$ with Lebesgue number $\delta$ then
$$N(\vee_{i=0}^{n-1}T^{-i}\alpha)\le \spann_X(n,\delta/2).$$
\end{rem}

\subsection{Topological complexity with respect to  $\{\hat{d}_n\}$}
For $n\in\mathbb{N}$ and $x,y\in X$,  define
\[\hat{d}_n(x,y)= \max\Bigl\{\frac{1}{k}\sum_{i=0}^{k-1}d(T^ix,T^iy)\colon k=1,2,\dotsc,n\Bigr\}.\]
It is easy to see that for each $n\in\N$, $\hat{d}_n$
is a metric on $X$ which is topologically equivalent to
the metric $d$.
For $x\in X$ and $\ep>0$, let
$B_{\hat{d}_n}(x,\ep)=\{y\in X\colon \hat{d}_n(x,y)<\ep\}.$
Let $K$ be a subset of $X$.
For $n\in\N$ and $\ep>0$, define
\[\widehat{\spann}_K(n,\varepsilon)=
\min\biggl\{\#(F)\colon
F\subset K\subset \bigcup_{x\in F}B_{\hat{d}_n}(x,\ep)\biggr\}.\]
We say that a subset $K$ of $X$ has \emph{bounded topological complexity
with respect to $\{\hat{d}_n\}$}
if for every $\ep>0$ there exists a positive integer $C=C(\ep)$
such that $\widehat{\spann}_K(n,\varepsilon)\leq C$
for all $n\geq 1$.

As $ \hat{d}_n(x,y)\leq d_n(x,y)$ for all $n\in\N$ and $x,y\in X$,
if $K$ has bounded topological complexity
with respect to $\{d_n\}$ then it is also bounded topological complexity
with respect to $\{\hat{d}_n\}$.
We say that a subset $K$ of $X$ is \emph{equicontinuous in the mean}
if for every $\ep>0$,
there exists a $\delta>0$ such that $\hat{d}_n(x,y)<\ep$ for all $n\in \Z$ and
all $x,y\in K$ with $d(x,y)<\delta$.

The following result follows the same lines in
Theorem~\ref{thm:T-equicontinuous-subset},
just replace the distance $d_n$
by $\hat{d}_n$, as the sequence $\{\hat{d}_n\}$ of metrics is also increasing.

\begin{thm}\label{thm:strongly-mean-equi}
Let $(X,T)$ be a t.d.s.\ and $K$ be a compact subset of $X$.
Then $K$ has bounded topological complexity
with respect to  $\hat{d}_n$ if and only if it is equicontinuous in the mean.
\end{thm}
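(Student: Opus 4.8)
The plan is to show that Theorem~\ref{thm:strongly-mean-equi} follows from exactly the same argument as Theorem~\ref{thm:T-equicontinuous-subset}, with the metric $d_n$ replaced throughout by $\hat d_n$. The only structural property of $\{d_n\}$ used in the proof of Theorem~\ref{thm:T-equicontinuous-subset} is monotonicity, that is, $d_n(u,v)\le d_{n+1}(u,v)$ for all $u,v\in X$ and all $n\in\N$; and the sequence $\{\hat d_n\}$ enjoys the same property, since $\hat d_{n+1}(x,y)=\max\{\hat d_n(x,y),\ \bar d_{n+1}(x,y)\}\ge \hat d_n(x,y)$. So I would first remark that monotonicity holds for $\{\hat d_n\}$, and then indicate which lines of the earlier proof carry over verbatim.

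For the direction ($\Leftarrow$), if $K$ is equicontinuous in the mean, pick $\delta>0$ with $\hat d_n(x,y)<\ep$ for all $n\ge 1$ and all $x,y\in K$ with $d(x,y)<\delta$; compactness of $K$ gives a finite $\delta$-dense set $F\subset K$, and then $K\subset\bigcup_{x\in F}B_{\hat d_n}(x,\ep)$ for all $n$, so $\widehat{\spann}_K(n,\ep)\le \#(F)$ is bounded independently of $n$. For the direction ($\Rightarrow$), I would argue by contradiction exactly as before: if $K$ fails to be equicontinuous in the mean, there exist $\ep>0$, sequences $x_k,y_k\in K$ with $d(x_k,y_k)<1/k$, and integers $n_k$ with $\bar d_{n_k}(x_k,y_k)\ge\ep$, hence $\hat d_{n_k}(x_k,y_k)\ge\ep$. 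Passing to $x_k\to x_0\in K$ (so also $y_k\to x_0$), the triangle inequality for $\hat d_{n_k}$ gives $\hat d_{n_k}(x_0,x_k)\ge\ep/2$ for infinitely many $k$. On the other hand, bounded complexity with respect to $\{\hat d_n\}$ for the constant $\ep/6$ yields a uniform bound $C$ on the spanning number, and one takes a Hausdorff limit $F$ of the spanning sets $F_{n_i}$, uses $\#(F)\le C$ (closedness of $\{A:\#(A)\le C\}$ in $K(X)$), and, using the \emph{monotonicity} of $\{\hat d_n\}$, shows $K\subset\bigcup_{z\in F}\{x\in K:\hat d_n(x,z)\le\ep/6\}$ for every $n$. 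Writing $F=\{z_1,\dots,z_m\}$ and $K_j=\bigcap_{n\ge 1}\{x\in K:\hat d_n(x,z_j)\le\ep/6\}$, each $K_j$ is closed and $K=\bigcup_j K_j$; some $K_j$ contains a subsequence of $\{x_k\}$ and hence also $x_0$, whence $\hat d_{n_k}(x_0,x_k)\le\hat d_{n_k}(x_0,z_j)+\hat d_{n_k}(z_j,x_k)\le\ep/3<\ep/2$ for large $k$, contradicting the lower bound above.

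Since there is essentially nothing new to prove, the write-up will be short; the only point deserving care—and the place I would be most careful to state explicitly—is the verification that $\{\hat d_n\}$ is an increasing sequence of metrics, since that monotonicity is invoked several times (in passing from the Hausdorff-limit estimate at the indices $n_i$ to all $n$, and in the final triangle-inequality step). I would phrase the proof as: \textquotedblleft The sequence $\{\hat d_n\}$ of metrics is increasing, and the proof is word for word that of Theorem~\ref{thm:T-equicontinuous-subset} with $d_n$ replaced by $\hat d_n$ and `$d(T^{m_k}x_k,T^{m_k}y_k)\ge\ep$' replaced by `$\bar d_{m_k}(x_k,y_k)\ge\ep$', so we omit the details.\textquotedblright{} If a referee prefers a self-contained argument, the paragraph above can be expanded into full sentences, but there is no genuine obstacle: the main (and only) thing to double-check is that every use of monotonicity of $\{d_n\}$ in the original proof is legitimate for $\{\hat d_n\}$, which it is.
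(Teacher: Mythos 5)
Your proposal is correct and coincides with the paper's own treatment: the paper likewise disposes of Theorem~\ref{thm:strongly-mean-equi} by noting that the proof of Theorem~\ref{thm:T-equicontinuous-subset} carries over verbatim once $d_n$ is replaced by $\hat d_n$, the only point to check being that $\{\hat d_n\}$ is again an increasing sequence of metrics. Your explicit verification of that monotonicity via $\hat d_{n+1}(x,y)=\max\{\hat d_n(x,y),\bar d_{n+1}(x,y)\}$ is exactly the observation the paper relies on.
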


We say that a subset $K$ of $X$
is \emph{mean equicontinuous}
if for every $\ep>0$,
there exists a $\delta>0$ such that
\[
\limsup_{n\to\infty}\frac{1}{n}\sum_{i=0}^{n-1}d(T^ix,T^iy)<\ep
\]
for all $x,y\in K$ with $d(x,y)<\delta$.
If $X$ is mean equicontinuous then we say that
$(X,T)$ is \emph{mean equicontinuous}.
It is clear that if $K$ is equicontinuous in the mean then it is mean equicontinuous.
We can show that for minimal systems they are equivalent.
\begin{prop}
	Let $(X,T)$ be  a minimal t.d.s.
	Then $(X,T)$ is mean equicontinuous
	if and only if  equicontinuous in the mean.
\end{prop}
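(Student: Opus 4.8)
The plan is to prove the non-trivial direction: assuming $(X,T)$ is minimal and mean equicontinuous, show it is equicontinuous in the mean. The other direction is already noted in the excerpt to be immediate, since $\bar d_n(x,y) \le \hat d_n(x,y)$ and the $\limsup$ condition is weaker than the ``for all $n$'' condition. So fix $\ep>0$; mean equicontinuity gives a $\delta>0$ such that $d(x,y)<\delta$ implies $\limsup_{n\to\infty}\bar d_n(x,y)<\ep$. I want to upgrade this to: there is a $\delta'>0$ (perhaps smaller) such that $d(x,y)<\delta'$ forces $\hat d_n(x,y)<\ep'$ for \emph{all} $n\in\N$, where $\ep'$ is some controlled multiple of $\ep$. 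The obstruction is purely about small values of $n$: for large $n$ the $\limsup$ bound already controls $\bar d_n$, so the only danger is that for some small $k$ the average $\bar d_k(x,y)$ is large even though $d(x,y)$ is tiny.

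The key idea I would use is a compactness/uniformity argument exploiting minimality. First, observe that the ``bad set'' $\{(x,y)\in X\times X : d(x,y)<\delta\text{ but }\hat d_k(x,y)\ge\ep'\text{ for some }k\}$ cannot cluster on the diagonal if we choose parameters correctly. More concretely: for each fixed $k$, the function $(x,y)\mapsto\bar d_k(x,y)$ is continuous and vanishes on the diagonal, so there is $\eta_k>0$ with $d(x,y)<\eta_k\Rightarrow\bar d_k(x,y)<\ep$. The problem is that $\inf_k\eta_k$ could be $0$. To rule this out I would argue by contradiction: suppose there are pairs $(x_m,y_m)$ with $d(x_m,y_m)\to 0$ and integers $k_m$ with $\bar d_{k_m}(x_m,y_m)\ge\ep'$; necessarily $k_m\to\infty$ (each fixed $k$ being controlled by continuity). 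Passing to a subsequence, $x_m,y_m\to z$ for some common limit $z\in X$. Now I would like to transport the estimate near $z$ to a uniform estimate everywhere via minimality: by minimality, every point of $X$ has its orbit passing near $z$, and a chunk of orbit of length $k_m$ starting near $z$ sees the average $\bar d_{k_m}$ staying large for the pair $(T^{j}x_m,T^{j}y_m)$... but this needs care because $T^j$ does not preserve the ``$d<\delta$'' condition.

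The cleaner route, and the one I expect the authors take, is to combine mean equicontinuity with the structure result: a mean equicontinuous minimal system is mean-L-stable (as the excerpt recalls these are equivalent), hence by Fomin/Oxtoby uniquely ergodic, and the factor map to its maximal equicontinuous factor $(X_{eq},T_{eq})$ is almost $1$-$1$ and a measure isomorphism; moreover the metric $d$ and the ``mean metric'' are comparable in a suitable sense on such systems. Then equicontinuity in the mean for all $n$ should follow because on an isometric factor the orbit averages $\bar d_n$ are genuinely controlled uniformly, not just in the $\limsup$. The main obstacle, as flagged, is the finitely-many-small-$n$ issue: I would handle it by noting that for $n\le N_0$ (a threshold depending on $\ep$) continuity of each $\bar d_n$ plus a finite intersection gives a uniform $\delta_1$, while for $n>N_0$ one uses that $\hat d_n(x,y)=\max(\hat d_{N_0}(x,y),\max_{N_0<k\le n}\bar d_k(x,y))$ and the second term is bounded by the $\limsup$ control plus an $o(1)$ correction uniform in the pair — the uniformity of that correction being exactly where minimality (equivalently, the upgrade to equicontinuity in the mean) is essential. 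Chaining these gives $\hat d_n(x,y)<\ep'$ for all $n$ with a single $\delta$, completing the proof after renaming constants.
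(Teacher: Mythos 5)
Your reduction of the problem is on target: the small-$n$ case is handled by uniform continuity of finitely many $\bar d_k$ (each vanishes on the diagonal, so compactness gives a uniform $\eta_k$), and the whole difficulty is to make the $\limsup$ control \emph{uniform over pairs} for large $n$. But neither of your two sketches actually closes this gap. The compactness/contradiction route stalls exactly where you flag it: you cannot transport the estimate along the orbit because $T^j$ destroys the proximity hypothesis, and you offer no fix. The second route is circular at the decisive point: you write that the $o(1)$ correction is uniform in the pair, ``the uniformity of that correction being exactly where minimality (equivalently, the upgrade to equicontinuity in the mean) is essential'' --- that is, you assert the conclusion where the proof is needed. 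Invoking the maximal equicontinuous factor, unique ergodicity, and an unspecified comparability of $d$ with the mean metric does not supply the missing uniformity either; the comparability statement is essentially a restatement of what must be proved.

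The paper's actual mechanism, which is absent from your proposal, is a Baire category argument combined with minimality. Fix a reference point $z$ and, for each $N$, let $A_N$ be the set of $x$ in a small closed ball around $z$ such that $\bar d_n(x,z)\le \ep/4$ for \emph{all} $n\ge N$. These are closed sets whose union is the whole ball (by mean equicontinuity), so by Baire one of them, say $A_{N_1}$, contains a nonempty open set $U$; on $U$ the convergence of the averages to a small value is uniform from time $N_1$ on. Minimality then gives $N_2$ with $\bigcup_{i=0}^{N_2-1}T^{-i}U=X$, and a Lebesgue number $\delta_2$ of this cover. For a pair $x,y$ with $d(x,y)<\delta_2$ and $n$ large, one finds $i_0<N_2$ with $T^{i_0}x,T^{i_0}y\in U$, discards the first $i_0$ terms of the average at negligible cost, and controls the rest by the triangle inequality through $z$ using the uniform bound on $U$. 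This is the concrete uniformization step your argument lacks; without it (or an equivalent), the proof does not go through.
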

\begin{proof}
	It is clear that equicontinuity in the mean implies mean equicontinuity.
	
	Assume that $(X,T)$ is mean equicontinuous.
	For each $\ep>0$ there is $\delta_1>0$ such that
	if $d(x,y)<\delta_1$
	then \[\limsup_{n\to\infty}\frac{1}{n}\sum_{i=0}^{n-1}d(T^ix,T^iy)<\frac{\ep}{8}.\]
	Fix $z\in X$. For each $N\in\N$, let
	\[A_N=\biggl\{x\in \overline{B(z,\delta_1/2)} \colon
	\frac{1}{n}\sum_{i=0}^{n-1}d(T^ix,T^iz) \leq \frac{\ep}{4},\
	n= N,N+1,\dotsc \biggr\}.\]
	Then $A_N$ is closed and
	$\overline{B(z,\delta_1/2)}=\bigcup_{N=1}^\infty A_N$.
	By the Baire Category Theorem,
	there is $N_1\in\N$ such that $A_{N_1}$
	contains an open subset $U$ of $X$.
	By the minimality we know that there is $N_2\in\N$ with $\bigcup_{i=0}^{N_2-1} T^{-i}U=X$.
	Let $\delta_2$ be the Lebesgue number of the open cover $\{T^{-i}U:0\le i\le N_2-1\}$ of $X$.
	Let $N=\max \{N_1, 2N_2\}$.
	By the continuity of $T$, there exists $\delta_3>0$
	such that
	if $d(x,y)<\delta_3$ implies that
	$d(T^ix,T^iy)<\frac{\ep}{4}$ for any $0\le i\le N$.
	Put $\delta=\min\{\delta_2,\delta_3\}$.
	Let $x,y\in X$ with $d(x,y)<\delta$ and $n\in\N$.
	If $n\leq N$, then
	\[\frac{1}{n}\sum_{i=0}^{n-1}d(T^ix,T^iy)\leq \frac{1}{n} \cdot n\cdot \frac{\ep}{4} <\ep.\]
	If $n>N$, there exists $0\leq i_0\leq N_2-1$ such that
	$x,y\in T^{-i_0}U$, i.e., $T^{i_0}x,T^{i_0}y\in U$, and then
	\begin{align*}
	\frac{1}{n}\sum_{i=0}^{n-1}d(T^ix,T^iy)
	&\leq  \frac{1}{n}\sum_{i=0}^{i_0-1}d(T^ix,T^iy)
	+ \frac{1}{n}\sum_{i=0}^{n-1}d(T^{i}T^{i_0}x,T^{i}T^{i_0}y)\\
	&\leq \frac{\ep}{4}+ \frac{1}{n}\sum_{i=0}^{n-1}d(T^{i}T^{i_0}x,T^{i}z)+
	\frac{1}{n}\sum_{i=0}^{n-1}d(T^{i}T^{i_0}x,T^{i}z)\\
	&\leq \frac{\ep}{4}+\frac{\ep}{4}+\frac{\ep}{4}<\ep.
	\end{align*}
	Therefore $\hat{d}_n(x,y)<\ep$ for all $n\in \Z$.
	This implies that $(X,T)$ is equicontinuous in the mean.
\end{proof}

\begin{rem}
When this paper was finished, we became aware of the work of \cite{QZ18} that Qiu and Zhao can show
that in general a t.d.s.\ is mean equicontinuous if and only if
it is equicontinuous in the mean.
\end{rem}

\subsection{Topological complexity
	with respect to  $\{\bar{d}_n$\}}
For $n\in\mathbb{N}$ and $x,y\in X$, define
\[\bar{d}_n(x,y)= \frac{1}{n}\sum_{i=0}^{n-1}d(T^ix,T^iy).\]
It is easy to see that for each $n\in\N$, $\bar{d}_n$
is a metric on $X$ which is topologically equivalent to
the metric $d$.
For $x\in X$ and $\ep>0$, let
$B_{\bar{d}_n}(x,\ep)=\{y\in X\colon \bar{d}_n(x,y)<\ep\}.$
For $n\in\N$ and $\ep>0$, define
\[\overline{\spann}_K(n,\varepsilon)=
\min\biggl\{\#(F)\colon
F\subset K\subset \bigcup_{x\in F}B_{\bar{d}_n}(x,\ep)\biggr\}.\]
We say that a subset $K$ of $X$ has \emph{bounded topological complexity
with respect to $\{\bar{d}_n$\}}
if for every $\ep>0$ there exists  a positive integer $C=C(\ep)$
such that $\overline{\spann}_K(n,\varepsilon)\leq C$
for all $n\geq 1$.

As $ \bar{d}_n(x,y)\leq \hat{d}_n(x,y)$ for all $n\in\N$ and $x,y\in X$,
if $K$ has bounded topological complexity
with respect to $\{\hat{d}_n\}$ then it  also has bounded topological complexity
with respect to $\{\bar{d}_n\}$.
Intuitively, dynamical systems with
bounded topological complexity with respect to $\{\bar{d}_n\}$
have similar properties of ones with respect to $\{\hat{d}_n\}$ or $\{d_n\}$.
But we will see that this is far from being true.
The key point is that the sequence $\{\bar{d}_n\}$ of metrics may be not monotonous.
If a dynamical system has bounded topological complexity
with respect to $\{\bar{d}_n\}$,
then by Theorem~\ref{thm:mu-discrete-spectrum} in next section,  every invariant measure has discrete spectrum.
So it is simple in the measure-theoretic sense.
But we have the following proposition which is a surprise in some sense. Since the construction is somewhat long and complicated,
we move it to the Appendix.

\begin{prop}\label{prop:key-example1}
There is a distal, non-equicontinuous, non-uniquely ergodic,
minimal system,
which has bounded topological complexity
with respect to $\{\bar{d}_n\}$.
\end{prop}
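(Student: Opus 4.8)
### Proof proposal

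\textbf{Overall strategy.} The plan is to build the system as an inverse limit (or a tower) of systems, where at stage $k$ we perturb a rotation on a circle (or on $\mathbb{T}^k$) by a skew-product with a carefully controlled cocycle, so that the resulting system is distal and minimal but fails equicontinuity, while the cocycle is chosen so small on average that the Ces\`aro metrics $\bar d_n$ cannot detect the extra spreading of orbits. The prototype to keep in mind is the classical construction of a distal non-equicontinuous minimal system as a skew-product $T(x,y)=(x+\alpha, y+x)$ on $\mathbb{T}^2$; the issue there is that $d_n$ (and in fact $\hat d_n$) blows up because the fiber displacement $ny$ in direction of $n$ grows. To kill this for $\bar d_n$ only, I would replace the single skew-product by an infinite product of skew-products over increasingly sparse blocks of times, where on the $n$-th ``window'' only finitely many coordinates are active and the active displacement is chosen to be $O(1/\text{(length of window)})$ in Ces\`aro average, exploiting that $\{\bar d_n\}$ is \emph{not} monotone (so good control at the ends of a window is not destroyed in the middle).

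\textbf{Key steps, in order.}
\emph{Step 1 (the building blocks).} Fix a sequence $n_k\to\infty$ growing very fast and irrationals $\alpha_k$. Define $X_k=\mathbb{T}^{k+1}$ with a skew-product $T_k$ of the form $T_k(x_0,\dots,x_k)=(x_0+\alpha_0,\ x_1+x_0,\ \dots)$ but with each successive coordinate ``switched on'' only with a tiny coupling constant $c_k$; concretely arrange that along the orbit of any point the displacement in the $k$-th coordinate over $N$ steps is bounded by $c_kN$, and choose $c_k$ so small relative to $n_{k-1}$ and to $2^{-k}$ that $\frac1N\sum_{i<N}d(T^i\cdot,T^i\cdot)$ contributed by coordinate $k$ is $\le 2^{-k}\ep$ for all $N\ge n_{k-1}$. \emph{Step 2 (distality and minimality).} Each $T_k$ is a distal extension of a minimal rotation, hence distal; a distal minimal skew-product with the right rational-independence of the $\alpha_k$ is again minimal, and the inverse limit $X=\varprojlim X_k$ with $T=\varprojlim T_k$ is distal (an inverse limit of distal systems is distal) and minimal. \emph{Step 3 (non-equicontinuity).} The first genuine skew-product layer already prevents a translation-invariant metric, so $(X,T)$ is not equicontinuous; in fact it is not even mean equicontinuous if the couplings are arranged so that the fiber displacement, though Ces\`aro-small, is not $\limsup$-small — but for this proposition we only need non-equicontinuity, which is immediate. \emph{Step 4 (bounded $\bar d_n$ complexity).} Given $\ep>0$, pick $K$ with $\sum_{k>K}2^{-k}<\ep/2$. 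For $n$ in the range governed by stage $K$, the coordinates $>K$ contribute at most $\ep/2$ to $\bar d_n$ uniformly, while the finitely many base coordinates $0,\dots,K$ form an equicontinuous (indeed isometric after a change of metric) factor, whose $\bar d_n$-balls — equivalently whose $d$-balls — can be covered by a number $C(\ep,K)$ of balls independent of $n$. Splitting $d=\sum 2^{-k}d^{(k)}$ coordinatewise and using $\bar d_n(x,y)\le \sum_k 2^{-k}\bar d_n^{(k)}(x,y)$ gives $X=\bigcup_{j=1}^{C}B_{\bar d_n}(x_j,\ep)$ with $C=C(\ep)$ independent of $n$. \emph{Step 5 (non-unique ergodicity).} To get non-unique ergodicity, one does \emph{not} use the pure skew-product (which over an irrational base is uniquely ergodic); instead replace the base by a non-uniquely ergodic minimal distal system (e.g. take a group extension with a disconnected structure group, or use two ``speeds'' glued by a Toeplitz-type construction) — or, more simply, take the product of the above $X$ with a fixed non-uniquely ergodic minimal distal system that itself has bounded $\bar d_n$-complexity. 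Since bounded $\bar d_n$-complexity, distality, and minimality are all preserved under products (the complexity bound for a product is the product of the bounds, the $\bar d_n$ of a product being controlled coordinatewise), this yields the required example, and the sister Proposition~\ref{prop:key-example2} handles the uniquely ergodic case by keeping the irrational base.

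\textbf{Main obstacle.} The delicate point is Step 4 combined with Step 1: I must choose the couplings $c_k$ and the windows $n_k$ simultaneously so that (a) the fiber displacement stays Ces\`aro-small on \emph{all} scales $n\ge n_{k-1}$, not just on the sparse subsequence $\{n_k\}$ — this is exactly where the non-monotonicity of $\{\bar d_n\}$ is used, since on scales $n$ much larger than a window the average dilutes the bad middle behaviour — while (b) the system remains genuinely non-equicontinuous, i.e. the couplings are not summably small in a way that would make the whole thing equicontinuous. Balancing these, together with verifying minimality of the infinite skew-product (which needs a Furstenberg-type argument that rational independence at every finite stage propagates to the inverse limit), is the real content; the covering estimate and the distality are then routine. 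Because the bookkeeping is lengthy, the detailed construction is deferred to the Appendix.
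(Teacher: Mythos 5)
Your proposal has a genuine gap at the heart of Steps 1 and 4. Making the coupling constants $c_k$ ``tiny'' does not control the Ces\`aro metrics: if a fibre coordinate is driven by a cocycle whose Birkhoff sums drift (as in your Anzai-type prototype, however small the coupling), then two points differing in the base have fibre coordinates that separate at a linear rate, so after of order $1/(c_k\, d(x,y))$ steps they are essentially equidistributed relative to one another and $\bar d_n(x,y)$ is bounded below by an absolute constant for all large $n$; smallness of $c_k$ only postpones this, and $\overline{\spann}_X(n,\ep)$ still grows with $n$. The relevant condition is cohomological, not metric: by Theorem~\ref{thm:mu-discrete-spectrum} every invariant measure of such a system must have discrete spectrum, which for a circle extension of a rotation forces the cocycle to be a measurable coboundary --- a property that has nothing to do with its sup norm. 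Your Step 4 exposes the tension: you assert that the first $K$ coordinates form an equicontinuous factor with covering number independent of $n$, but if every finite stage were equicontinuous the inverse limit would be too, contradicting Step 3. The paper instead builds a single skew product $T(x,y)=(x+\alpha,y+h(x))$ on $\mathbb{T}^2$ where $h=\sum_k h_k$ has a precise \emph{cancellation} structure rather than mere smallness: each $h_k$ is supported on intervals around $i\alpha$, $0\le i\le 2N_k-1$, of total measure $<\eta/2^k$, takes opposite signs on the two halves so that $\sum_{s=0}^{2N_k-1}h_k(R_\alpha^s z)=0$, and consequently the partial sums $H_m^{h_k}$ vanish between visits to the support (Lemma~\ref{lm}). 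This makes $h$ a measurable but not continuous coboundary, which simultaneously yields non-equicontinuity, non-unique ergodicity (via Furstenberg's dichotomy) and, through an explicit four-part covering argument, bounded $\bar d_n$-complexity. Your proposal contains no analogue of this mechanism, and without it the construction fails.

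Step 5 is also circular: you propose to obtain non-unique ergodicity by taking a product with ``a fixed non-uniquely ergodic minimal distal system that itself has bounded $\bar d_n$-complexity,'' which is exactly the object whose existence is being proved (and products of minimal systems need not be minimal). In the paper non-unique ergodicity is not grafted on afterwards; it is proved for the same skew product by exhibiting a continuous-boundary indicator function whose Birkhoff averages along the orbit of $(0,0)$ stay bounded away from the Haar integral, using that $\Vert H_i^h(0)-i/N_k\Vert<\tfrac{1}{10}$ for a positive proportion of times $i<\tfrac12 N_k$, so that the orbit spends a definite excess of time in $\mathbb{T}\times[0,\tfrac12)$.
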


We can modify the example in Proposition~\ref{prop:key-example1}
to be uniquely ergodic and also present the construction
in the Appendix.
\begin{prop}\label{prop:key-example2}
	There is a distal, non-equicontinuous, uniquely ergodic,
	minimal system,
	which has bounded topological complexity
	with respect to $\{\bar{d}_n\}$.
\end{prop}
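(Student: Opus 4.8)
The plan is to rigidify the construction behind Proposition~\ref{prop:key-example1}. That example is a distal minimal system built, in the spirit of the Furstenberg structure theorem, as a tower of isometric extensions: along a sparse but infinite sequence of scales $n_1<n_2<\cdots$ one inserts controlled ``anomalies'' that force two points at distance tending to $0$ to be separated by a definite amount under some iterate (this is what produces non-equicontinuity), while the orbit windows carrying the anomalies have length negligible compared with $n$ and density zero (this is what keeps the Ces\`aro averages $\bar d_n$ — which, crucially, are not monotone in $n$ — small, and hence yields bounded complexity with respect to $\{\bar d_n\}$). The non-uniqueness of the invariant measure in Proposition~\ref{prop:key-example1} comes from residual combinatorial freedom in how the anomalies are distributed along orbits. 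First I would remove this freedom by fixing once and for all a single placement pattern of the elementary building blocks, with a well-defined asymptotic frequency at each level (a periodic-at-each-level, Toeplitz-type prescription), so that the orbit closure of a distinguished point carries no further choice.

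Next I would establish unique ergodicity. The direct route is to verify, via Oxtoby's uniform-convergence criterion, that for every continuous $f$ the Birkhoff averages $\frac{1}{N}\sum_{i=0}^{N-1}f(T^ix)$ converge uniformly in $x$; this is precisely where the prescribed frequencies pay off, since at each level the proportion of an orbit segment spent in each elementary piece is now fixed. Alternatively one reads off unique ergodicity from the structure of the tower, provided the cocycles defining the isometric extensions are chosen inside the restricted family for which the minimal extension of the (uniquely ergodic) base stays uniquely ergodic. Distality and minimality are inherited unchanged, being structural features of the tower independent of the placement pattern, and non-equicontinuity persists as well, since the separating pairs $x_k,y_k$ at the scales $n_k$ occur for every admissible placement, the rigidified one included. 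Finally, bounded topological complexity with respect to $\{\bar d_n\}$ follows by repeating the estimate of Proposition~\ref{prop:key-example1} essentially verbatim: given $\ep>0$ one exhibits a family of at most $C(\ep)$ points whose $\bar d_n$-balls of radius $\ep$ cover $X$ for all $n$, handling small $n$ by covering $X$ with finitely many $d$-balls (using $\bar d_n\le d_n$ and topological equivalence) and large $n$ by observing that each anomaly window contributes only $O(\mathrm{length}/n)\to 0$ to $\bar d_n$, so the covering collapses back to a bounded one. By Theorem~\ref{thm:mu-discrete-spectrum} the unique invariant measure then automatically has discrete spectrum, consistently with the picture.

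The hard part is the simultaneous balancing act. Unique ergodicity demands combinatorics uniform enough for Birkhoff averages to converge uniformly, whereas non-equicontinuity — and, more pointedly, the gap between bounded complexity for $\{\bar d_n\}$ and unbounded complexity for $\{\hat d_n\}$ — demands that the anomalies genuinely recur along the infinite sequence of scales $n_k$. One must therefore choose the frequencies and positions of the anomalies so that, at each scale $n_k$, they remain visible (producing the pair that ruins mean equicontinuity) and yet occur with density zero and a well-defined limiting frequency (producing unique ergodicity), and then check that all four requirements survive this single choice. The detailed construction and the verifications are carried out in the Appendix.
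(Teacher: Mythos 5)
There is a genuine gap here: your plan never actually produces a mechanism for unique ergodicity, and the mechanism you gesture at does not match the structure of the example. The system of Proposition~\ref{prop:key-example1} is not a combinatorial tower with ``anomalies'' placed along orbits that one could re-place in a Toeplitz fashion; it is a skew product $T(x,y)=(x+\alpha,\,y+h(x))$ over an irrational rotation, with a single continuous cocycle $h=\sum_k h_k$. For such systems unique ergodicity is not a matter of prescribing asymptotic frequencies and invoking Oxtoby's criterion --- by Furstenberg's dichotomy it is a cohomological condition: the skew product fails to be uniquely ergodic precisely when some non-zero integer multiple $s\cdot h$ is a measurable coboundary $g\circ R_\alpha-g$. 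Worse, your target is internally constrained: if the modified skew product were uniquely ergodic with Haar measure and still had bounded complexity with respect to $\{\bar d_n\}$, then Theorem~\ref{thm:mu-discrete-spectrum} forces Haar measure to have discrete spectrum, which in turn forces $h$ to be measurably cohomologous to a constant $c$ with $\{1,\alpha,c\}$ rationally independent. Nothing in ``fix the placement pattern so each level has a well-defined frequency'' delivers this rather specific cohomological identity, and you give no argument that it does; every verification (minimality, non-equicontinuity, the covering estimate for the altered system) is deferred to an appendix you have not written.

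The paper's actual proof sidesteps the balancing act entirely by exploiting the failure of unique ergodicity of the first example constructively. Since $(\mathbb T^2,T)$ is not uniquely ergodic, Furstenberg's dichotomy hands you a measurable $g$ and a non-zero integer $s$ with $s\cdot h(x)=g(x+\alpha)-g(x)$ a.e.; one then takes $\beta$ irrational and rationally independent of $\alpha$ and defines $\widetilde T_\beta(x,y)=(x+\alpha,\,y+s\cdot h(x)+\beta)$. The map $(x,y)\mapsto(x,y-g(x))$ conjugates $\widetilde T_\beta$, on a set of full Haar measure, to the uniquely ergodic rotation $(x,y)\mapsto(x+\alpha,y+\beta)$, which gives unique ergodicity (and minimality via full support of the unique measure), while non-equicontinuity and the bound $\bar d_n^{\,s h+\beta}\le|s|\,\bar d_n^{\,h}$ follow from the estimates already established for $h$, since the new cocycle is just a scalar multiple of the old one plus a constant. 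If you want to salvage your write-up, the cohomological reformulation of unique ergodicity is the missing idea you need to put at the center; without it, the proposal is a plan rather than a proof.
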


\begin{rem}
As each distal mean equicontinuous minimal system is equicontinuous,
the systems constructed in  Propositions~\ref{prop:key-example1}
and \ref{prop:key-example2} are not mean equicontinuous.
\end{rem}

We have a natural question.

\begin{ques} \label{want-2}Is there a non-trivial weakly mixing, even strongly mixing minimal system with bounded topological complexity with respect to $\{\bar d_n\}$?
\end{ques}

We are just informed by Huang and Xu \cite{HXU} the above question has an affirmative answer for weakly mixing minimal systems.
The question if there is
a non-trivial strongly mixing  minimal  system with bounded topological complexity with respect to $\{\bar d_n\}$ is still open.

\section{Invariant measures with bounded measure-theoretic complexity}
In this section, we will study
the measure-theoretic complexity of invariant (Borel probability) measures with respect to three kinds of metrics.
\subsection{Measure-theoretic complexity with respect to $\{d_n\}$}
Let $(X,T)$ be a t.d.s.\ and $\mu$ be
an invariant measure on $(X,T)$.
For $n\in\N$ and $\ep>0$,
let
\[\spann_\mu(n,\ep)=
\min\biggl\{\#(F)\colon F\subset X \text{ and }
\mu\biggl(\bigcup_{x\in F} B_{d_n}(x,\ep)\biggr)>1-\ep\biggr\}.\]
Recall that this is the same notion defined in \cite{Katok} by Katok. We say that $\mu$ has \emph{bounded  complexity
with respect to $\{d_n\}$}
if for every $\ep>0$ there exists a positive integer $C=C(\ep)$ such that
$\spann_\mu(n,\varepsilon)\leq C$ for all $n\geq 1$.

We will show that an invariant measure with bounded complexity
with respect to $\{d_n\}$
is equivalent to the $\mu$-equicontinuity property.

\begin{thm}\label{thm-mu-equi}
Let $(X,T)$ be a t.d.s.\ and
$\mu$ be an invariant measure on $(X,T)$.
Then $\mu$ has bounded complexity
with respect to $\{d_n\}$  if and only if
 $T$ is $\mu$-equicontinuous.
\end{thm}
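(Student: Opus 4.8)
The plan is to prove both implications by reducing, as in the proof of Theorem~\ref{thm:T-equicontinuous-subset}, to the existence of finitely many closed ``$d_n$-small'' pieces that cover a large-measure set, and then recognizing such pieces as $T$-equicontinuous sets.

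For the direction ($\Leftarrow$), suppose $T$ is $\mu$-equicontinuous. Fix $\ep>0$. Choose a $T$-equicontinuous measurable set $K$ with $\mu(K)>1-\ep$; by regularity of $\mu$ we may take $K$ compact. By Theorem~\ref{thm:T-equicontinuous-subset}, $K$ has bounded topological complexity with respect to $\{d_n\}$, so there is $C=C(\ep/2)$ such that for every $n$ there is $F_n\subset K$ with $\#(F_n)\le C$ and $K\subset\bigcup_{x\in F_n}B_{d_n}(x,\ep)$. Then $\mu(\bigcup_{x\in F_n}B_{d_n}(x,\ep))\ge\mu(K)>1-\ep$, so $\spann_\mu(n,\ep)\le C$ for all $n$, and $\mu$ has bounded complexity with respect to $\{d_n\}$.

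For the direction ($\Rightarrow$), suppose $\mu$ has bounded complexity with respect to $\{d_n\}$. Fix $\tau>0$; I must produce a $T$-equicontinuous measurable set of measure greater than $1-\tau$. The idea is to run the compactness argument of Theorem~\ref{thm:T-equicontinuous-subset} at a sequence of scales $\ep_k\downarrow 0$. For each $\ep>0$, let $C=C(\ep)$ and, for each $n$, pick $F_n\subset X$ with $\#(F_n)\le C$ and $\mu(\bigcup_{x\in F_n}B_{d_n}(x,\ep))>1-\ep$. Viewing $\{F_n\}$ in the hyperspace $K(X)$ and passing to a subsequence $n_i$, we get $F_{n_i}\to F$ with $\#(F)\le C$. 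Using that $\{d_n\}$ is \emph{increasing}, exactly as in Theorem~\ref{thm:T-equicontinuous-subset}, the sets
\[
K^{(\ep)}_{z}=\bigcap_{n=1}^{\infty}\{x\in X\colon d_n(x,z)\le \ep\}\qquad(z\in F)
\]
are closed, and one shows $\mu\bigl(\bigcup_{z\in F}K^{(\ep)}_z\bigr)\ge 1-\ep$: indeed, for each fixed $n$, the open set $\bigcup_{x\in F_{n_i}}B_{d_n}(x,\ep)$ has measure $>1-\ep$ for all large $i$ (since $d_{n}\le d_{n_i}$ there), and taking the Hausdorff limit shows $\bigcup_{z\in F}\{x:d_n(x,z)\le\ep\}$ has measure $\ge 1-\ep$; then intersect over $n$. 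Each $K^{(\ep)}_z$ satisfies $d_n(u,v)\le 2\ep$ for all $u,v\in K^{(\ep)}_z$ and all $n$, so it is a $T$-equicontinuous set. Now for a chosen sequence $\ep_k=\tau 2^{-k-1}$ let $E_k$ be the union $\bigcup_{z}K^{(\ep_k)}_z$, so $\mu(E_k)>1-\ep_k$, and set $K=\bigcap_{k\ge 1}E_k$. Then $\mu(K)>1-\sum_k\ep_k>1-\tau$. Finally $K$ is $T$-equicontinuous: given $\delta'>0$, pick $k$ with $2\ep_k<\delta'$; since $E_k$ is a union of at most $C(\ep_k)$ sets each of $d_n$-diameter $\le 2\ep_k$, the finitely many sets $K^{(\ep_k)}_z$ are at positive mutual $d$-distance or coincide in the relevant sense, so there is $\delta>0$ with the property that $d(x,y)<\delta$, $x,y\in K$ forces $x,y$ to lie in a common $K^{(\ep_k)}_z$, whence $d(T^nx,T^ny)\le d_n(x,y)\le 2\ep_k<\delta'$ for all $n$. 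Hence $T$ is $\mu$-equicontinuous.

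The main obstacle is the last step of the ($\Rightarrow$) direction: one must be careful that the finitely many closed ``$\ep_k$-pieces'' $K^{(\ep_k)}_z$ can genuinely be separated in the ambient metric, so that a single uniform $\delta$ works for all pairs in $K$ — this is where finiteness of $\#(F)\le C(\ep_k)$ and compactness are essential, and it mirrors the contradiction argument at the end of Theorem~\ref{thm:T-equicontinuous-subset}. A clean way to package it is: if $x_j,y_j\in K$, $d(x_j,y_j)\to 0$ but $d(T^{m_j}x_j,T^{m_j}y_j)\ge\ep$ for some $\ep>0$ and some $m_j$, pass to a subsequence so that all $x_j$ (hence their limit $x_0$) lie in one fixed $K^{(\ep_k)}_z$ with $2\ep_k<\ep/2$; then $d_{m_j+1}(x_0,x_j)\le 2\ep_k<\ep/2$, contradicting the separation estimate obtained from $d(T^{m_j}x_j,T^{m_j}y_j)\ge\ep$ exactly as before. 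One should also note that measurability of $K$ is automatic since it is a countable intersection of finite unions of closed sets.
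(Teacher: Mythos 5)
Your proposal is correct and follows essentially the same route as the paper: extract Hausdorff limits of the finite spanning sets, use the monotonicity of $\{d_n\}$ to obtain finitely many closed pieces of uniformly small $d_n$-diameter at each scale, and intersect the resulting large-measure sets over scales $\ep_k$ with $\sum_k \ep_k<\tau$; the only cosmetic difference is that you re-run the final compactness/contradiction argument of Theorem~\ref{thm:T-equicontinuous-subset} inline, whereas the paper packages the forward direction by producing compact sets $K_M$ with $\spann_{K_M}(n,3/M)\le C_M$ and then invoking that theorem directly. One small imprecision: an individual piece $K^{(\ep)}_z$ is not itself $T$-equicontinuous (it merely has $d_n$-diameter at most $2\ep$ for all $n$), but your concluding argument does not rely on that claim, so nothing is affected.
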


\begin{proof}
($\Leftarrow$)
First assume that $(X,T)$ is $\mu$-equicontinuous.
Fix $\varepsilon>0$.
There exists a $T$-equicontinuous measurable subset $K$ of $X$ with $\mu(K)>1-\varepsilon$.
As the measure $\mu$ is regular, we can require the set $K$ to be compact.
Now the result follows from Theorem~\ref{thm:T-equicontinuous-subset},
as $\spann_\mu(n,\varepsilon)\leq \spann_K(n,\varepsilon)$.

\medskip

($\Rightarrow$)
For any $\tau>0$, we need to find a $T$-equicontinuous set $K$ with $\mu(K)>1-\tau$.
Now fix $\tau>0$.
As $\mu$ has bounded complexity with respect to $\{d_n\}$,
for any $M>0$, there exists $C=C_M>0$ such that
for every $n\geq 1$
there exists a subset $F_n$  of $X$ with $\#(F_n)\leq C$
such that
\[\mu\biggl(\bigcup_{x\in F_n} B_{d_n}\Bigl(x,\tfrac{1}{M}\Bigr)\biggr)>1-\frac{\tau}{2^{M+2}}.\]
As the measure $\mu$ is regular,
pick a compact subset $K_n$ of $\bigcup_{x\in F_n} B_{d_n}(x,\frac{1}{M})$
with $\mu(K_n)>1-\frac{\tau}{2^{M+2}}$.
Without loss of generality, assume that $F_n\to F_M$, $K_n\to K_M$ as $n\to\infty$ in the Hausdorff metric.
Then $\#( F_M)\leq C$.
As $K_n$ is closed,
\[\mu(K_M)\geq \limsup_{n\to\infty}\mu(K_n)\geq 1-\frac{\tau}{2^{M+2}}.\]
For any $x\in K_M$ and $n\in \N$,
there exists an $N>0$ such that for any $k>N$
there exists $x_k \in K_k$ and $y_k\in F_k$
such that $d_n(x,x_k)<\frac{1}{M}$ and $d_k(x_k,y_k)<\frac{1}{M}$.
Without loss of generality, assume that $y_k\to y$ as $k\to\infty$.
Then $y\in F_M$.
By the monotonicity of $\{d_n\}$, we have
\[d_n(x,y_k)\leq d_n(x,x_k)+d_n(x_k,y_k)\leq d_n(x,x_k)+d_k(x_k,y_k)\leq \tfrac{2}{M}.\]
Letting $k$ go to infinity, one has $d_n(x,y)\leq  \frac{2}{M}$.
Then $K_M\subset \bigcup_{x\in F_M} B_{d_n}(x,\frac{3}{M})$
and $\spann_{K_M}(n,\frac{3}{M})\leq \#(F_M)\leq C_M$.

Let $K=\bigcap_{M=1}^\infty K_M$. Then $\mu(K)>1-\tau$ and for any $M\geq 1$,
\[\spann_{K}(n,\tfrac{3}{M})\leq \spann_{K_M}(n,\tfrac{3}{M})\le C_M\]
for all $n\geq 1$.
Now by Theorem~\ref{thm:T-equicontinuous-subset}, $K$ is $T$-equicontinuous.
This proves that $(X,T)$ is $\mu$-equicontinuous.
\end{proof}

\begin{rem}
Similar to the observation in Remark~\ref{rem:open-cover}, the open cover version of Theorem \ref{thm-mu-equi} was proved in \cite[Proposition 3.3]{HLY}.	
\end{rem}

\subsection{Measure-theoretic complexity with respect to  $\{\hat{d}_n\}$}
For $n\in\N$ and $\ep>0$,
let
\[\widehat{\spann}_\mu(n,\ep)=
\min\biggl\{\#(F)\colon F\subset X \text{ and }
\mu\biggl(\bigcup_{x\in F} B_{\hat{d}_n}(x,\ep)\biggr)>1-\ep\biggr\}.\]
We say that $\mu$ has \emph{bounded complexity
with respect to  $\{\hat{d}_n\}$}
if for every $\ep>0$ there exists a positive integer $C=C(\ep)$ such that
$\widehat{\spann}_\mu(n,\varepsilon)\leq C$ for all $n\geq 1$.

We will show that an invariant measure with bounded complexity
with respect to  $\{\hat{d}_n\}$ is equivalent to the following
two kinds of measure-theoretic equicontinuity.
We say that $T$ is \emph{$\mu$-equicontinuous in the mean}
if for any $\tau>0$
there exists a  measurable subset $K$ of
$X$ with $\mu(K)>1-\tau$ which is equicontinuous in the mean,
and \emph{$\mu$-mean equicontinuous}
if for any $\tau>0$
there exists a  measurable subset $K$ of
$X$ with $\mu(K)>1-\tau$ which is mean equicontinuous.

\begin{thm}\label{thm:mu-eq-in-mean}
Let $(X,T)$ be a t.d.s.\ and
$\mu$ be an invariant measure on $(X,T)$.
Then the following statements are equivalent:
\begin{enumerate}
	\item $\mu$ has bounded  complexity
	with respect to  $\hat{d}_n$;
	\item 	$T$ is $\mu$-equicontinuous in the mean;
	\item $T$ is $\mu$-mean equicontinuous.
\end{enumerate}
\end{thm}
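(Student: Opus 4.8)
The plan is to establish the cycle of implications $(2)\Rightarrow(3)\Rightarrow(1)\Rightarrow(2)$. The implication $(2)\Rightarrow(3)$ is immediate: a set that is equicontinuous in the mean is mean equicontinuous, so a system which is $\mu$-equicontinuous in the mean is $\mu$-mean equicontinuous. For $(1)\Rightarrow(2)$ I would repeat, almost verbatim, the proof of the forward direction of Theorem~\ref{thm-mu-equi}, with $d_n$ replaced by $\hat d_n$ throughout: for each $M\in\N$ bounded complexity with respect to $\{\hat d_n\}$ yields a constant $C_M$ and, for every $n$, a finite set $F_n$ with $\#(F_n)\le C_M$ and $\mu\bigl(\bigcup_{x\in F_n}B_{\hat d_n}(x,\tfrac1M)\bigr)>1-\tau/2^{M+2}$; choosing compact $K_n$ inside these unions with almost full measure, passing to Hausdorff limits $F_n\to F_M$ and $K_n\to K_M$, and using that $\{\hat d_n\}$ is an \emph{increasing} sequence of metrics, one obtains $\widehat{\spann}_{K_M}(n,\tfrac3M)\le C_M$ for all $n$. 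Then $K=\bigcap_M K_M$ has $\mu(K)>1-\tau$ and $\widehat{\spann}_K(n,\tfrac3M)\le C_M$ for all $n,M$, so $K$ is equicontinuous in the mean by Theorem~\ref{thm:strongly-mean-equi}, which shows $T$ is $\mu$-equicontinuous in the mean. (The reverse $(2)\Rightarrow(1)$, not needed for the cycle, is even easier: take a compact equicontinuous-in-the-mean $K$ with $\mu(K)>1-\ep$ and use $\widehat{\spann}_\mu(n,\ep)\le\widehat{\spann}_K(n,\ep)$ together with Theorem~\ref{thm:strongly-mean-equi}.)

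The substantive step is $(3)\Rightarrow(1)$. Fix $\ep>0$. Using $\mu$-mean equicontinuity, choose a compact mean-equicontinuous set $K$ with $\mu(K)>1-\tfrac\ep2$ and $\delta>0$ such that $x,y\in K$ with $d(x,y)<\delta$ forces $\limsup_{n\to\infty}\bar d_n(x,y)<\tfrac\ep8$. Cover $K$ by finitely many balls $B(w_1,\delta/2),\dots,B(w_p,\delta/2)$ with $w_j\in K$, and put $A_j=K\cap B(w_j,\delta/2)$, so every $x\in A_j$ satisfies $\limsup_n\bar d_n(x,w_j)<\tfrac\ep8$. For $N\in\N$ set $A_j^N=\{x\in A_j:\bar d_n(x,w_j)<\tfrac\ep8 \text{ for all } n\ge N\}$; this is a Borel set (a countable intersection of open conditions), and the sets $A_j^N$ increase with $N$ to $A_j$. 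Hence we may fix $N_j$ with $\mu(A_j\setminus A_j^{N_j})<\ep/(2p)$. Since $d_{N_j}$ is a metric compatible with $d$, the set $A_j^{N_j}$ is totally bounded for $d_{N_j}$, so there are finitely many $u_{j,1},\dots,u_{j,q_j}\in A_j^{N_j}$ with $A_j^{N_j}\subset\bigcup_{l}B_{d_{N_j}}(u_{j,l},\tfrac\ep8)$; crucially $q_j$ depends only on $\ep$ and $N_j$, not on $n$.

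It then remains to check that these centres $\ep$-span in \emph{every} metric $\hat d_n$. If $x\in A_j^{N_j}$ and $d_{N_j}(x,u_{j,l})<\tfrac\ep8$, then for $k\le N_j$ one has $\bar d_k(x,u_{j,l})\le d_{N_j}(x,u_{j,l})<\tfrac\ep8$, while for $k>N_j$, using the triangle inequality for $\bar d_k$ and the fact that $x,u_{j,l}\in A_j^{N_j}$, $\bar d_k(x,u_{j,l})\le\bar d_k(x,w_j)+\bar d_k(w_j,u_{j,l})<\tfrac\ep8+\tfrac\ep8=\tfrac\ep4$. Thus $\bar d_k(x,u_{j,l})<\ep$ for all $k\ge1$, hence $\hat d_n(x,u_{j,l})<\ep$ for all $n$. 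Consequently the single finite set $\{u_{j,l}:1\le j\le p,\ 1\le l\le q_j\}$, of cardinality at most $\sum_j q_j$, satisfies $\bigcup_j A_j^{N_j}\subset\bigcup_{j,l}B_{\hat d_n}(u_{j,l},\ep)$ for every $n$, and $\mu\bigl(\bigcup_j A_j^{N_j}\bigr)\ge\mu(K)-\tfrac\ep2>1-\ep$; this gives $\widehat{\spann}_\mu(n,\ep)\le\sum_j q_j$ for all $n$, which is exactly $(1)$.

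The one genuine difficulty, concentrated in $(3)\Rightarrow(1)$, is that mean equicontinuity is an asymptotic ($\limsup$) hypothesis, whereas bounded complexity with respect to $\{\hat d_n\}$ demands uniform control of $\bar d_k$ over \emph{all} $k$ at once. The device that overcomes it is to fix the centre $w_j$ before extracting the uniform time $N_j$: this turns the exhaustion $A_j^N\uparrow A_j$ into a triviality (continuity of $\mu$ from below) rather than a real Egorov argument, and the good piece $A_j^{N_j}$ can then be re-covered by $d_{N_j}$-balls, a covering whose cardinality does not see $n$. The remaining points to be careful about are the Borel measurability of $A_j^N$ and the requirement that the covering centres $u_{j,l}$ be chosen inside $A_j^{N_j}$ (so that $\bar d_k(w_j,u_{j,l})<\tfrac\ep8$ for $k>N_j$ is available); both are routine.
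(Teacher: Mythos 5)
Your proposal is correct and follows essentially the same route as the paper: (1)$\Rightarrow$(2) and (2)$\Rightarrow$(3) are handled identically, and your (3)$\Rightarrow$(1) uses the same key device of fixing centres $w_j$, extracting uniform times $N_j$ from the increasing exhaustion $A_j^N\uparrow A_j$, and splitting $\bar d_k$ into the regimes $k\le N_j$ (controlled by a finite-time metric) and $k>N_j$ (controlled by the triangle inequality through $w_j$). The only cosmetic difference is that you cover each good piece by $d_{N_j}$-balls via total boundedness, whereas the paper disjointifies into compact pieces $K_j$ and uses a single $d$-ball cover together with the continuity of $T$; both work.
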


\begin{proof}

(1) $\Rightarrow$ (2)
Following the proof of Theorem \ref{thm-mu-equi}, we know that for a given $\tau>0$, there is a compact subset $K$
such that $\mu(K)\geq  1-\tau$ and for any $M\geq 1$,
$\widehat\spann_{K}(n,\frac{6}{M})\leq  C_M$ for all $n\geq 1$.
By Theorem~\ref{thm:strongly-mean-equi}, $K$ is  equicontinuous
in the mean.
This proves that $(X,T)$ is $\mu$-equicontinuous in the mean.

\medskip

(2) $\Rightarrow$ (3) is obvious.

\medskip
(3) $\Rightarrow (1)$ Now assume that $(X,T)$ is $\mu$-mean equicontinuous.
Fix $\varepsilon>0$. Then there is a compact $K=K(\ep)\subset X$ such that $\mu(K)>1-2\ep$ and $K$ is mean equicontinuous.
There exists a $\delta>0$ such that
\[
\limsup_{n\ra \infty} \frac{1}{n}\sum_{i=0}^{n-1}d(T^ix,T^iy)<\ep/4
\]
for all $x,y\in K$ with $d(x,y)<\delta$.
As $K$ is compact,
there exists a finite subset $F$ of $K$ such that
$K\subset \bigcup_{x\in F}B(x,\delta)$.
Enumerate $F$ as $\{x_1,x_2,\dotsc,x_m\}$.
For $j=1,\dotsc,m$ and $N\in\N$, let
\[A_N(x_j)=\biggl\{y\in B(x_j,\delta)\cap K\colon
\frac{1}{n}\sum_{i=0}^{n-1}d(T^ix_j,T^iy)<\ep/2,\
n=N,N+1,\dotsc\biggr\}.
\]
It is easy to see that for each $j=1,\dotsc,m$,
$\{A_N(x_j)\}_{N=1}^\infty$ is an increasing sequence and $B(x_j,\delta)\cap K=\bigcup_{N=1}^\infty A_N(x_j)$.
Choose $N_1\in\N$ and a compact subset $K_1$ of $A_{N_1}(x_1)$
such that $\mu(K_j)>\mu(B(x_j,\delta)\cap K)-\frac{\ep}{2m}$.
Choose $N_2\in\N$ and a compact subset $K_2$ of $A_{N_2}(x_2)$
such that $K_1\cap K_2=\emptyset$ and
$\mu(K_1\cup K_2)>
\mu((B(x_1,\delta)\cup B(x_2,\delta))\cap K)-\frac{2\ep}{2m}$.
By induction,
we can choose compact subsets $K_j$ of $A_{N_j}(x_j)$ for $j=1,\dotsc,m$
with $\mu(\bigcup_{j=1}^m K_j)> \mu(K)-\frac{\ep}{2}>1-\ep$
and $K_i\cap K_j=\emptyset$ for $1\leq i<j\leq m$.

Let $K_0=\bigcup_{j=1}^m K_j$ and
$N_0=\max\{N_j\colon j=1,2,\dotsc,m\}$.
There exists $\delta_1>0$ such that
for every $x,y\in K$ with $d(x,y)<\delta_1$
there exists $j\in \{1,2,\dotsc,m\}$ with $x,y\in K_j$.
By the continuity of $T$,
there exists $\delta_2>0$ such that $d_{N}(x,y)<\ep$
for every $x,y\in X$ with $d(x,y)<\delta_2$.
Let $\delta_3=\min\{\delta_1,\delta_2\}$.
By the compactness of $K_0$, there exists a finite subset $H$
of $K_0$ such that $H\subset \bigcup_{x\in H}B(x,\delta_3)$.
Fix $n\geq 1$ and $y\in K_0$.
There exists $x\in H$ with $d(x,y)<\delta_3$.
If $n<N_0$, then $\hat{d}_n(x,y)\leq d_{N_0}(x,y)<\ep$.
If $n\geq N_0$,  there exists $j\in\{1,2,\dotsc,m\}$
with $x,y\in K_j\subset A_{N_j}(x_j)$.
By the construction of $A_{N_j}(x_j)$ and $n\geq N_j$,
\[
\frac{1}{n}\sum_{i=0}^{n-1}d(T^ix,T^iy)\leq
\frac{1}{n}\sum_{i=0}^{n-1}d(T^ix,T^ix_j)+
\frac{1}{n}\sum_{i=0}^{n-1}d(T^ix_j,T^iy)
<\ep/2+\ep/2=\ep.
\]
For any $n\geq 1$, we have  $\hat{d}_n(x,y)<\ep$.
Then
\[K_0\subset \bigcup_{x\in H} B_{\hat{d}_n}(x,\ep)\]
and
\[\mu\biggl(\bigcup_{x\in H} B_{\hat{d}_n}(x,\ep)\biggr)
\geq \mu(K_0)>1-\ep.\]
This implies that $\widehat{\spann}_\mu(n,\ep)\leq \#(H)$
for all $n\geq 1$.
Then $\mu$ has bounded  complexity
with respect to  $\{\hat{d}_n\}$.
\end{proof}

\subsection{Measure-theoretic complexity with respect to  $\{\bar{d}_n\}$}
For $n\in\N$, $\ep>0$,
let
\[\overline{\spann}_\mu(n,\ep)=
\min\biggl\{\#(F)\colon F\subset X \text{ and }
\mu\biggl(\bigcup_{x\in F} B_{\bar{d}_n}(x,\ep)\biggr)>1-\ep\biggr\}.\]
We say that $\mu$ has \emph{bounded  complexity
with respect to  $\{\bar{d}_n\}$}
if for every $\ep>0$
there exists a positive integer $C=C(\ep)$ such that
$\overline{\spann}_\mu(n,\varepsilon)\leq C$ for all $n\geq 1$.

Unlike the topological case,
we can prove that bounded measure-theoretic complexity
with respect  $\{\bar{d}_n\}$ and $\{\hat{d}_n\}$ are equivalent.

\begin{thm}\label{nov-5}
Let $(X,T)$ be a t.d.s.\ and
$\mu$ be an invariant measure on $(X,T)$.
Then $\mu$ has bounded  complexity
with respect to  $\{\bar{d}_n\}$
if and only if it has bounded  complexity
with respect to  $\{\hat{d}_n\}$.
\end{thm}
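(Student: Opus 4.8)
One direction requires nothing new: since $\bar d_n\le\hat d_n$ pointwise we have $B_{\hat d_n}(x,\ep)\subseteq B_{\bar d_n}(x,\ep)$ for every $x$, $n$, $\ep$, hence $\overline{\spann}_\mu(n,\ep)\le\widehat{\spann}_\mu(n,\ep)$, so bounded complexity with respect to $\{\hat d_n\}$ trivially implies bounded complexity with respect to $\{\bar d_n\}$. For the converse I would fix $\ep>0$ and try to bound $\widehat{\spann}_\mu(n,\ep)$ by a constant depending only on $\ep$. The plan is to take, at a much larger time scale $N=N(n)$ — it will suffice to put $N=2n$ — a near-optimal $\bar d_N$-spanning set of bounded cardinality, and then transport it to a $\hat d_n$-spanning set of the same cardinality by choosing, using the $T$-invariance of $\mu$, a favourable power $T^{j}$ under which to view it. The bridge between the two scales is a one-sided maximal inequality, and the crucial feature is that it can be obtained with a constant independent of $n$.

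First I would record the elementary ``rising sun'' estimate: for nonnegative reals $a_0,\dots,a_{N-1}$ and $\lambda>0$, the set
\[
E=\Bigl\{\,0\le j<N:\ \textstyle\sum_{i=j}^{j+k-1}a_i\ge k\lambda\ \text{for some }k\ge1\text{ with }j+k\le N\,\Bigr\}
\]
satisfies $\#(E)\le\lambda^{-1}\sum_{i=0}^{N-1}a_i$. This follows by decomposing $E$ into its maximal intervals $[p,q)$, checking that $\sum_{i=p}^{q-1}(a_i-\lambda)\ge0$ on each, and summing the lengths. The point of this step is that $\#(E)$ is controlled with no loss depending on the window length: a crude estimate — ``each $i$ with $a_i$ large can spoil up to $n$ windows of length $\le n$'' — would cost a factor $n$ here, and that factor is precisely what must be avoided.

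Next I would fix a small $\delta=\delta(\ep)>0$, say $\delta<\ep^{2}/(2+\ep)$, and let $C(\delta)$ be the constant given by bounded complexity with respect to $\{\bar d_n\}$ at level $\delta$. For a given $n$ put $N=2n$ and choose $F_N\subseteq X$ with $\#(F_N)\le C(\delta)$ and $\mu(G_N)>1-\delta$, where $G_N=\bigcup_{y\in F_N}B_{\bar d_N}(y,\delta)$. For $x\in G_N$ fix a witness $y=y(x)\in F_N$; then $a_i:=d(T^ix,T^iy)$ satisfies $\sum_{i=0}^{N-1}a_i<\delta N$, and since $\hat d_n(T^jx,T^jy)\ge\ep$ forces $j\in E$ (with $\lambda=\ep$; note that for $j<N-n$ the relevant $k\le n$ satisfies $j+k\le N$), the estimate above shows that at most $\delta N/\ep$ of the indices $j\in[0,N-n)$ satisfy $\hat d_n(T^jx,T^jy)\ge\ep$. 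Putting $D_j=\{x\in X:\ \hat d_n(T^jx,T^jy)<\ep\ \text{for some }y\in F_N\}$, this says $\sum_{j=0}^{N-n-1}\mathbf{1}_{D_j}(x)\ge(N-n)-\delta N/\ep$ on $G_N$; integrating against $\mu$ and using $\mu(G_N)>1-\delta$ together with $N=2n$, the average of $\mu(D_j)$ over $j\in[0,N-n)$ exceeds $1-\ep$, so there is $j_0$ with $\mu(D_{j_0})>1-\ep$. Finally $D_{j_0}=T^{-j_0}\bigl(\bigcup_{z\in T^{j_0}F_N}B_{\hat d_n}(z,\ep)\bigr)$, so invariance of $\mu$ gives $\mu\bigl(\bigcup_{z\in T^{j_0}F_N}B_{\hat d_n}(z,\ep)\bigr)>1-\ep$ while $\#(T^{j_0}F_N)\le C(\delta)$; hence $\widehat{\spann}_\mu(n,\ep)\le C(\delta)$ for every $n$, which is what was wanted.

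The only real obstacle is the one addressed in the second step: one must bound, simultaneously over all averaging windows of length $\le n$ along the orbit of the pair $(x,y)$ and using only the single global bound $\bar d_N(x,y)<\delta$, the number of shifts under which $(x,y)$ fails to be $\hat d_n$-close — and it is the maximal (rising sun) inequality, rather than a union bound over the $\le n$ possible window lengths, that keeps $\delta$, and therefore the cardinality bound $C(\delta)$, free of $n$. Once that is in hand, the remaining ingredients — the choice $N=2n$, the integration, and the passage back via invariance of $\mu$ — are routine.
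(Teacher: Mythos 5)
Your proposal is correct, but it takes a genuinely different route from the paper's. The paper proves the nontrivial direction by applying the Birkhoff ergodic theorem to $\mu\times\mu$ (so that $\bar d_n(x,y)$ converges a.e.), then Egorov and Fubini to extract a time $N_0$ and a large compact set on which $\bar d_n(x,y)$ is within $r$ of $\bar d_{N_0}(x,y)$ for all $n\ge N_0$; small $n$ are handled by continuity of $T$, and the final $\hat d_n$-spanning set is a finite $\delta$-net of that compact set in the original metric $d$. You instead avoid the ergodic theorem, Egorov, Fubini and regularity entirely: the finite one-sided maximal (rising sun) inequality converts the single bound $\bar d_{2n}(x,y)<\delta$ into the statement that at most $2\delta n/\ep$ of the shifts $j<n$ can have $\hat d_n(T^jx,T^jy)\ge\ep$, and averaging over $j$ together with $T$-invariance of $\mu$ produces a good shift $j_0$ and the spanning set $T^{j_0}F_{2n}$. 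Your key observation --- that the maximal inequality, unlike a union bound over window lengths, loses no factor of $n$ --- is exactly what makes this work, and it is the same mechanism that underlies the a.e.\ convergence the paper invokes, used here in its finitary form. What your argument buys is an explicit quantitative relation, $\widehat{\spann}_\mu(n,\ep)\le \overline{\spann}_\mu(2n,\delta)$ with $\delta=\ep^2/(2+\ep)$ (so only the scale $N=2n$ is needed, not all $N$ simultaneously), whereas the paper's $N_0$ from Egorov is non-effective; the paper's construction, on the other hand, additionally produces the compact equicontinuous-in-the-mean set used elsewhere in Section 4. I verified the details: the decomposition of $E$ into maximal intervals $[p,q)$ with $\sum_{i=p}^{q-1}(a_i-\lambda)\ge 0$ is the standard finite maximal ergodic lemma, the sets $D_j$ are open so no measurable selection of the witness $y(x)$ is needed, and the arithmetic $(1-\delta)(1-2\delta/\ep)>1-\ep$ holds for your choice of $\delta$. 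The argument is complete.
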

\begin{proof}
It is clear that if $\mu$ has has bounded  complexity
with respect to  $\{\hat{d}_n\}$ then by definition it also has
has bounded  complexity
with respect to  $\{\bar{d}_n\}$.
	
Now assume that $\mu$ has bounded  complexity
with respect to  $\{\bar{d}_n\}$.
Let $\ep>0$. There is $C=C(\ep)$ such that for any $n\in\N$,
there is $F_n\in X$ with $\#(F_n)\leq C$  such that
\[
\mu\biggl(\bigcup_{x\in F_n} B_{\bar{d}_n}(x,\ep/8)\biggr)>1-\ep/8.
\]
By the Birkhoff pointwise ergodic theorem for $\mu\times \mu$ a.e. $(x,y)\in X^2$
$$\bar d_N(x,y)=\frac{1}{N}\sum_{i=0}^{N-1}d(T^ix,T^iy)\rightarrow d^*(x,y).$$
So for a given $0<r< \min\{1,\frac{\ep}{2C}\}$, by the Egorov's theorem there are $R\subset X^2$ with $\mu\times \mu(R)>1-r^2$ and $N_0\in\N$
such that if $(x,y)\in R$ then
$$|\bar d_n(x,y)-\bar d_{N_0}(x,y)|<r,\ \text{for}\ n\ge N_0.$$
By the Fubini's theorem there is $A\subset X$ such that $\mu(A)>1-r$ and for any $x\in A$,
$\mu(R_x)>1-r$, where
\[R_x=\{y\in X: (x,y)\in R\}.\]
Enumerate $F_{N_0}=\{x_1,x_2,\dotsc,x_m\}$. Then $m\leq C$.
Let $I=\{1\leq i\leq m\colon A\cap B_{\bar{d}_{N_0}}(x_i, \ep/8)
\neq\emptyset \}$.
Denote $\#(I)=m'$. Then $1\leq m'\leq m$.
For each $i\in I$, pick $y_i\in A\cap  B_{\bar{d}_{N_0}}(x_i, \ep/8)$.
Then we have $B_{\bar{d}_{N_0}}(x_i, \ep/8)\subset
B_{\bar{d}_{N_0}}(y_i, \ep/4)$ for all $i\in I$.
As
\begin{align*}
\mu\biggl( A \cap \bigcap_{i\in I} R_{y_i}
\cap \bigcup_{x\in F_{N_0}} B_{\bar{d}_{N_0}}(x, \ep/8)
\biggr)\geq 1-r-m'r-\ep/8>1-\ep,
\end{align*}
choose a compact subset
\[K \subset A \cap \bigcap_{i\in I} R_{y_i}
\cap \bigcup_{x\in F_{N_0}} B_{\bar{d}_{N_0}}(x, \ep/8)\] with $\mu(K)>1-\ep$.
If $x\in K$, there exists $i\in I$
such that $x\in R_{y_i}\cap B_{\bar{d}_{N_0}}(y_i, \ep/4)$.
Then $(y_i,x)\in R$.
By the construction of $R$,
for any $n\ge N_0$,
\[\bar d_n(x,y_i)=\bar d_n(y_i,x)\le
\bar d_{N_0}(y_i,x)+r<\ep/4+r<\ep/2.\]
Let $\delta_1>0$ be a Lebesgue number of the open cover of $K$
by $\{K\cap B_{\bar{d}_{N_0}}(y_i,\ep/4)\colon i\in I\}$.
By the continuity of $T$, there exists $0<\delta<\delta_1$ such that
if $d(x_1,x_2)<\delta$ then $d_{N_0}(x_1,x_2)<\ep$.
Let $x_1,x_2\in K$ with $d(x_1,x_2)<\delta$.
There is $i\in I$
such that $x_1,x_2\in A_{y_i}\cap B_{\bar{d}_{N_0}}(y_i,\ep/4)$.
Fix $n\geq 1$.
If $n<N_0$, $\bar d_n(x_1,x_2)\le d_{N_0}(x_1,x_2)<\ep$.
If $n\geq N_0$,
\[\bar d_n(x_1,x_2)\le \bar d_n(x_1,y_i)+\bar d_n(x_2,y_i) <\ep/2+2r<\ep.\]
Then $\hat{d}_n(x_1,x_2)<\ep$ for all $n\geq 1$.
By the compactness of $K$, there exists a finite subset $H$
of $K$ such that $K\subset \bigcup_{x\in H}B(x,\delta)$.
For any $n\geq 1$,
we have
\[K\subset \bigcup_{x\in H} B_{\hat{d}_n}(x,\ep)\]
and then
\[\mu\biggl(\bigcup_{x\in H} B_{\hat{d}_n}(x,\ep)\biggr)
\geq \mu(K)>1-\ep.\]
This implies that $\widehat{\spann}_\mu(n,\ep)\leq \#(H)$
for all $n\geq 1$.
Then $\mu$ has bounded  complexity
with respect to  $\{\hat{d}_n\}$.
\end{proof}

We can restate Proposition 4.1 of \cite{HWY} as follows.
\begin{prop}\label{prop:discrete-specturm-HWY}
Let $(X,T)$ be an invertible t.d.s.\ and
$\mu$ be an invariant measure on $(X,T)$.
If $\mu$ has discrete spectrum,
then it has
 bounded  complexity
with respect to  $\{\bar{d}_n\}$.
\end{prop}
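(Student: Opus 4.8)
The plan is to adapt the argument behind Proposition~4.1 of \cite{HWY}, reorganised so that the uniformity in $n$ is transparent. After rescaling the metric we may assume $\diam(X)\le 1$, and we fix $\ep>0$. First I would use compactness to replace $d$ by finitely many scalar observables: choosing a finite $\ep/8$-net $p_1,\dots,p_s$ of $X$ and setting $g_j=d(\cdot,p_j)\in C(X)$, one has $g_j\colon X\to[0,1]$ and, whenever $\max_j|g_j(x)-g_j(y)|\le \ep/8$, also $d(x,y)<\ep/4$. Consequently, if for a pair $x,y$ one knows $\frac1n\sum_{i=0}^{n-1}|g_j(T^ix)-g_j(T^iy)|<V$ for every $j$, then splitting $\{0,\dots,n-1\}$ into the indices $i$ with $d(T^ix,T^iy)\ge \ep/4$ (whose density is $<8sV/\ep$ by the above) and the rest, and using $\diam(X)\le 1$, gives $\bar d_n(x,y)<\tfrac{8sV}{\ep}+\tfrac\ep4$. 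So it suffices to produce, for each $n$, a set $F$ whose cardinality is bounded independently of $n$ together with a set of $\mu$-measure $>1-\ep$ covered by small $\bar d_n$-balls around $F$, using only control of the orbit averages of the finitely many $g_j$.

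Here is where discrete spectrum enters. Since $H_c=L^2(\mu)$, and since truncating an eigenfunction $\phi$ on the $T$-invariant set $\{|\phi|\le M\}$ yields again an eigenfunction, the bounded eigenfunctions have dense linear span in $L^2(\mu)$. Hence for a small $\eta>0$ (to be fixed at the end, depending only on $\ep$, $s$ and the data below) I can choose $h_j=\sum_{l=1}^L c_{jl}\phi_l$, a finite linear combination of bounded eigenfunctions $\phi_l$ with eigenvalues $\lambda_l$, $|\lambda_l|=1$, $\|\phi_l\|_\infty\le M$, and $\|g_j-h_j\|_{L^2(\mu)}<\eta$. The crucial point is the \emph{collapse of the orbit average}: for $\mu$-a.e.\ $x$ one has $\phi_l(T^ix)=\lambda_l^i\phi_l(x)$ for all $i\ge0$ and all $l$, whence $h_j(T^ix)-h_j(T^iy)=\sum_l c_{jl}\lambda_l^i(\phi_l(x)-\phi_l(y))$ and therefore
\[\frac1n\sum_{i=0}^{n-1}|h_j(T^ix)-h_j(T^iy)|\le \sum_{l=1}^L |c_{jl}|\,|\phi_l(x)-\phi_l(y)|,\]
a bound \emph{independent of $n$}, governed only by the position of $x$ and $y$ under the bounded measurable map $\Phi=(\phi_1,\dots,\phi_L)\colon X\to\mathbb{C}^L$.

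The cover is then built downstairs. Partition a bounded cube of $\mathbb{C}^L$ containing $\Phi(X)$ into $N=N(\delta)$ cells of sup-diameter $<\delta$ and pull back by $\Phi$ to a Borel partition $X=\bigsqcup_{a=1}^{N} E_a$ with $|\phi_l(x)-\phi_l(y)|<\delta$ for $x,y\in E_a$; then $\frac1n\sum_i|h_j(T^ix)-h_j(T^iy)|<\delta\sum_l|c_{jl}|$ on each $E_a$. To pass back to $g_j$, note that by $T$-invariance of $\mu$ one has $\int \frac1n\sum_{i=0}^{n-1}|g_j(T^ix)-h_j(T^ix)|\,d\mu(x)=\int|g_j-h_j|\,d\mu<\eta$, so Markov's inequality produces a set $G$ with $\mu(G)>1-\ep/4$ on which $\frac1n\sum_i|g_j(T^ix)-h_j(T^ix)|$ is small, simultaneously for $j=1,\dots,s$. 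For $x,y$ in a common $E_a$, both in $G$, and off the null set where the eigenfunction identities fail, the triangle inequality gives $\frac1n\sum_i|g_j(T^ix)-g_j(T^iy)|<V$ with $V$ as small as we wish once $\eta$ and $\delta$ are small; choosing them so that $\tfrac{8sV}{\ep}<\tfrac\ep4$ forces $\bar d_n(x,y)<\ep/2$. Picking one representative in each nonempty $G\cap E_a$ yields at most $N(\delta)$ points whose $\bar d_n$-balls of radius $\ep$ cover a set of measure $\ge\mu(G)>1-\ep$; since $N(\delta)$ depends only on $\ep$, this gives $\overline{\spann}_\mu(n,\ep)\le N(\delta)$ for all $n$, which is the assertion.

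The main obstacle is really the main idea: the averaging collapse $\frac1n\sum_i|\phi(T^ix)-\phi(T^iy)|\le|\phi(x)-\phi(y)|$, valid precisely because $|\lambda|=1$, is the mechanism converting discrete spectrum into $n$-uniform complexity. The points demanding care are: (i) reducing to finitely many \emph{bounded} eigenfunctions, i.e.\ that their $L^2$-span exhausts $L^2(\mu)$ — automatic when $\mu$ is ergodic and obtained in general by the modulus-truncation above; (ii) transferring the estimate from $h_j$ to $g_j$, which must be carried out in $L^1(\mu)$ averaged along orbits via invariance of $\mu$, not through any uniform bound; and (iii) the bookkeeping that every constant produced ($s$, $\eta$, $\delta$, $M$, $L$, $N(\delta)$) depends on $\ep$ alone, so that the final bound is genuinely independent of $n$.
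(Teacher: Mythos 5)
Your argument is correct, and it essentially reconstructs the proof that the paper itself does not reproduce: the paper merely restates Proposition~4.1 of \cite{HWY} and cites it, and the mechanism there is the same one you isolate, namely that for an eigenfunction $\phi$ with $|\lambda|=1$ the orbit average $\frac1n\sum_{i<n}|\phi(T^ix)-\phi(T^iy)|$ collapses to $|\phi(x)-\phi(y)|$, so that a partition pulled back from finitely many bounded eigenfunctions controls $\bar d_n$ uniformly in $n$. You handle the two points that genuinely require care: the reduction to \emph{bounded} eigenfunctions in the non-ergodic case (the truncation on the $T$-invariant set $\{|\phi|\le M\}$ is valid, since $|\phi|$ is $T$-invariant a.e.), and the transfer from the $h_j$ back to the $g_j$ via $\int\frac1n\sum_i|g_j-h_j|\circ T^i\,d\mu=\int|g_j-h_j|\,d\mu$, which correctly produces an exceptional set $G=G_n$ that may depend on $n$ while leaving the cardinality bound $N(\delta)$ independent of $n$ --- exactly what the definition of $\overline{\spann}_\mu(n,\ep)$ requires. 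The only slip is numerical: with an $\ep/8$-net, $\max_j|g_j(x)-g_j(y)|\le\ep/8$ gives $d(x,y)<3\ep/8$ rather than $\ep/4$ (one $\ep/8$ to reach $p_j$ from $x$, plus $d(y,p_j)<\ep/4$); replacing the net by an $\ep/16$-net, or simply carrying $3\ep/8$ through, leaves the conclusion intact. Your ordering of the constants ($s$, then the target $V$, then $t$ and $\eta$, then the $h_j$ and their coefficients, then $\delta$ and $N(\delta)$) avoids any circularity, so the final bound depends on $\ep$ alone.
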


It is conjectured in  \cite{HWY} that the converse of
Proposition~\ref{prop:discrete-specturm-HWY} is also true.
If $\mu$ is ergodic,  by \cite[Corollary 39]{Felip},
we know that $\mu$  has discrete spectrum and
if and only if $\mu$ is mean equicontinuous.
So by Theorem \ref{thm:mu-eq-in-mean},
if an ergodic measure $\mu$ has bounded complexity
with respect to  $\{\bar{d}_n\}$,
then it has discrete spectrum.
We will show in Theorem \ref{thm:mu-discrete-spectrum} that that in general the converse of
Proposition~\ref{prop:discrete-specturm-HWY} is also true.

The following result was proved in~\cite[Theorem 2.7]{L16},
see also \cite[Corollary 39]{Felip}.
Here we provide a different direct proof.

\begin{prop} \label{thm:ergodic-measure-not-null}
Let $(X,T)$ be a t.d.s.\ and
$\mu$ be an ergodic invariant measure on $(X,T)$.
If $\mu$ does not have  discrete spectrum,
then there exists $\alpha>0$ such that
for $\mu\times\mu$-almost every pair $(x,y)\in X\times X$,
\[\liminf_{n\to\infty}\frac{1}{n}\sum_{i=0}^{n-1} d(T^ix,T^iy)>\alpha.\]
\end{prop}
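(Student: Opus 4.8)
The plan is to use the spectral structure of $(X,\mathcal B_X,\mu,T)$ together with ergodicity. Since $\mu$ is ergodic and does not have discrete spectrum, the orthogonal complement $H_c^{\perp}$ of the space $H_c$ of almost periodic functions inside $L^2(\mu)$ is nontrivial; it is the \emph{weakly mixing part} (or ``continuous spectrum part'') of the system. Fix a real-valued $f\in H_c^{\perp}$ with $\int f\,\dd\mu=0$ and $\|f\|_{L^2(\mu)}=1$. The key input is the classical fact that a function lies in the continuous-spectrum subspace if and only if its Cesàro averages of correlations vanish: $\frac{1}{n}\sum_{i=0}^{n-1}\bigl|\int f\cdot (f\circ T^i)\,\dd\mu\bigr|^2\to 0$ as $n\to\infty$. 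Equivalently, applying this to the product system, $\frac1n\sum_{i=0}^{n-1}\bigl(\int f\cdot(f\circ T^i)\,\dd\mu\bigr)^2\to 0$.

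First I would work on the product space $(X\times X,\mu\times\mu,T\times T)$ with the function $F(x,y)=f(x)-f(y)$. A direct computation gives
\[
\int_{X\times X} \bigl(F\circ (T\times T)^i\bigr)\cdot F \,\dd(\mu\times\mu)
= 2\|f\|^2 - 2\Bigl(\int f\cdot(f\circ T^i)\,\dd\mu\Bigr)^{\!2},
\]
using $\int f\,\dd\mu=0$. By the previous paragraph the Cesàro averages of the subtracted term tend to $0$, so
\[
\frac1n\sum_{i=0}^{n-1}\int_{X\times X}\bigl(F\circ(T\times T)^i\bigr)\cdot F\,\dd(\mu\times\mu)\ \longrightarrow\ 2\|f\|^2 = 2 > 0.
\]
Now I would invoke the Birkhoff ergodic theorem for $(X\times X,\mu\times\mu,T\times T)$ applied to the bounded function $F\cdot (F\circ(T\times T)^j)$ for each fixed $j$, or more efficiently apply the $L^2$-mean ergodic theorem: the Cesàro averages $\frac1n\sum_{i=0}^{n-1}F\bigl((T\times T)^i(x,y)\bigr)^2$ converge in $L^1(\mu\times\mu)$ to a limit whose integral is $2$. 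Passing to a subsequence along which convergence is almost everywhere and using Fatou, I get a set of positive $(\mu\times\mu)$-measure on which $\liminf_n \frac1n\sum_{i=0}^{n-1}F((T\times T)^i(x,y))^2 > 0$; a further ergodic-averaging argument (the averages of $F^2\circ(T\times T)^i$ have an invariant, hence a.e.-constant if $\mu\times\mu$ is ergodic — but $\mu\times\mu$ need not be ergodic here) forces one to be more careful.

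To handle non-ergodicity of $\mu\times\mu$, I would instead argue directly with $|F|$ and a single continuous function. The cleanest route: choose $f$ bounded (possible since bounded functions are dense and $H_c^\perp$ is closed and $U$-invariant, so one can take $f$ bounded in $H_c^\perp$), set $g=|f(x)-f(y)|$ on $X\times X$, a bounded function, and note $g(x,y)^2 \le (2\|f\|_\infty)\, g(x,y)$... actually the comparison I want is $d(T^ix,T^iy)\ge c\cdot |f(T^ix)-f(T^iy)|$ which fails since $f$ need not be Lipschitz. So instead I approximate: pick $f$ continuous (continuous functions are dense in $L^2$; project a continuous function onto $H_c^\perp$ and note the projection need not be continuous, so one must instead take a continuous $f$ with small $H_c$-component and bounded away from $0$ norm in $H_c^\perp$). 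With $f$ continuous, $|f(u)-f(v)|\le \omega(d(u,v))$ for the modulus of continuity $\omega$, and for $\delta>0$ small the set $\{d(u,v)<\delta\}$ forces $|f(u)-f(v)|$ small; hence for any $\eta>0$ there is $\delta>0$ with $|f(u)-f(v)|\ge \eta \Rightarrow d(u,v)\ge\delta$, giving $d(u,v)\ge \delta\cdot\mathbf 1_{\{|f(u)-f(v)|\ge\eta\}}$, so $\frac1n\sum_{i=0}^{n-1} d(T^ix,T^iy)\ge \delta\cdot \frac1n\#\{0\le i<n: |F((T\times T)^i(x,y))|\ge\eta\}$. Since $\frac1n\sum_{i<n}F^2\circ(T\times T)^i$ has Cesàro average (in mean) bounded below by roughly $2\|f\|_{H_c^\perp}^2>0$ while $F$ is bounded, a standard counting argument produces $\eta,c>0$ with $\liminf_n \frac1n\#\{i<n:|F\circ(T\times T)^i|\ge\eta\}\ge c$ on a positive-measure set, and invariance of the $\liminf$ under $T\times T$ plus the ergodic decomposition of $\mu\times\mu$ upgrades ``positive measure'' to ``full measure'' with a uniform constant $\alpha=\delta c/2$.

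\textbf{Main obstacle.} The delicate point is exactly the upgrade from ``on a set of positive measure'' to ``$\mu\times\mu$-a.e. with a uniform $\alpha$'': $\mu\times\mu$ is \emph{not} ergodic for $T\times T$ in general, so one cannot simply invoke ergodicity of the product. I expect the right fix is to note that because $\mu$ has \emph{purely continuous} spectrum on $f$'s component, the function $\Phi(x,y):=\liminf_n \bar d_n(x,y)$ is $(T\times T)$-invariant and, by the correlation computation above, $\int \Phi\,\dd(\mu\times\mu)\ge$ a positive constant; combined with the fact (provable from the spectral picture, or from the dichotomy ``$\mu$ ergodic with continuous spectrum on a subspace'' $\Rightarrow$ the product measure's ergodic components all see that subspace) that $\Phi$ is a.e. bounded below, one extracts the uniform $\alpha$. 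Carrying that last step out carefully — essentially showing the relevant liminf cannot vanish on any positive-measure piece — is where the real work lies.
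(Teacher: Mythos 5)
There are two genuine gaps. First, the computation at the heart of your first step is wrong: with $F(x,y)=f(x)-f(y)$ and $\int f\,\dd\mu=0$, expanding the product gives $\int_{X\times X}(F\circ(T\times T)^i)\cdot F\,\dd(\mu\times\mu)=2\int f\cdot(f\circ T^i)\,\dd\mu$, not $2\Vert f\Vert^2-2\bigl(\int f\cdot(f\circ T^i)\,\dd\mu\bigr)^2$; the squared correlation arises for the function $f(x)\overline{f(y)}$ on the product, not for $f(x)-f(y)$. The Ces\`aro averages of $2\langle f\circ T^i,f\rangle$ tend to $0$ (the spectral measure of $f$ has no atom at $1$), not to $2\Vert f\Vert^2$, so this step does not produce the positive lower bound everything afterwards relies on. What you actually need is a pointwise a.e.\ lower bound on $\frac1n\sum_{i<n}F^2\circ(T\times T)^i$, that is, on the conditional expectation $E(F^2\mid\mathcal{I}_{T\times T})$ with respect to the invariant $\sigma$-algebra of the product, and no integral identity by itself controls that on every ergodic component.

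Second --- and you say this yourself --- the step that does all the work, upgrading ``positive on a set of positive measure'' to ``bounded below by a uniform $\alpha$ for $\mu\times\mu$-a.e.\ pair,'' is left open, and it is precisely the content of the proposition. The paper closes it by a different, more concrete route: it disintegrates $\mu=\int_G\mu_z\,\dd\nu(z)$ over the Kronecker factor $(G,\nu,S)$ and uses the classical fact that $\mu\times\mu=\int_G\lambda_s\,\dd\nu(s)$ with $\lambda_s=\int_G\mu_z\times\mu_{z+s}\,\dd\nu(z)$ is the ergodic decomposition of the product. Birkhoff then gives $\lim_n\bar d_n(x,y)=\int d\,\dd\lambda_{s(x,y)}$ a.e., and the uniform bound is $\int d\,\dd\lambda_s\ge\int_G c(z)\,\dd\nu(z)=:\alpha$, where $c(z)>0$ quantifies that $\mu_z$ is not a Dirac mass (true $\nu$-a.e.\ by Rohlin's theorem when $\mu$ lacks discrete spectrum); crucially this bound is independent of $s$ because Haar measure on $G$ is translation invariant. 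Your sketch never supplies an analogue of this uniformity. If you want to keep the spectral route, you would still need the same description of the ergodic components of $\mu\times\mu$ (equivalently, that $E(f\otimes f\mid\mathcal{I}_{T\times T})$ sees only the Kronecker part of $f$) to get $E(F^2\mid\mathcal{I}_{T\times T})\ge 2\Vert P_{H_c^{\perp}}f\Vert^2$ a.e., together with a continuous $f$ having nonzero component in $H_c^{\perp}$; so the paper's ingredient is not avoided, only postponed.
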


\begin{proof}
Let $\mathcal{B}_\mu$ be the completion of the Borel $\sigma$-algebra $\mathcal{B}_X$ of $X$ with respect to $\mu$.
Corresponding to the discrete part of the spectrum of the action of $T$, there exists a compact metric abelian group $(G,+)$ with
Haar measure $\nu$, an element $\tau$ of $G$ such that $(G,\mathcal{B}_\nu, \nu,S)$ is the Kronecker factor
of $(X,\mathcal{B}_\mu,\mu,T)$ with an associated factor map $\pi:X\rightarrow G$, where
$\mathcal{B}_\nu$ be the completion of the Borel $\sigma$-algebra  of $G$ with respect
to $\nu$ and $S$ is the translation by $\tau$ on $G$.

Let $\mu=\int_G \mu_z {\rm d} \nu(z)$ be the disintegration of the measure $\mu$ over $\nu$.
For $s\in G$, let
$$\lambda_s= \int_G \mu_z\times \mu_{z+s} \dd \nu(z).$$
It is a classical result (see e.g.\ \cite[$\S 4.3.1$ Theorem 18]{HK18}) that there is $G_0\subset G$ with $\nu(G_0)=1$ such that for every $s\in G_0$, the system $(X\times X, \lambda_s,T\times T)$ is ergodic and
$$\mu\times \mu=\int_G \lambda_s \dd \nu(s)$$
is the ergodic decomposition $\mu\times \mu$ under $T\times T$.

By the Birkhoff ergodic theorem,
the limit
\[\lim_{n\rightarrow +\infty} \bar d_n (x,y)\]
exists and equals to
$$\int_{X\times X} d(x_1,x_2) \dd \lambda_s(x_1,x_2)$$ for some $s=s(x,y)\in G_0$
for $\mu\times \mu$-a.e. $(x,y)\in X^2$.

Now it is sufficient to show that  if  $(X,\mathcal{B}_\mu,\mu,T)$  does not have  discrete spectrum, then there exists $\alpha>0$ such that
$\int_{X\times X} d(x_1,x_2) \dd \lambda_s(x_1,x_2)\ge \alpha$
for all $s\in G_0$.

As $X$ is compact, pick a countable dense subset $\{y_n\colon n\in\mathbb{N}\}$ in $X$.
For $z\in G$,
$$c(z):=\inf_{n\in\mathbb{N}} \int_X d(x,y_n) \dd \mu_z(x).$$
It is clear that $c(z)>0$ if and only if $\mu_z$ is not a Dirac measure. Moreover, $c(\cdot)$ is a non-negative measurable function on $G$.
Put $$\alpha:=\int_G c(z) \dd \nu(z).$$
Since $(X,\mathcal{B}_\mu,\mu,T)$ is ergodic and does not have discrete spectrum, by Rohlin's theorem
$\mu_z$ is not a Dirac measure for $\nu$-a.e. $z\in G$.
This means that $c(z)>0$ for $\nu$-a.e. $z\in G$ and thus $\alpha>0$.
For each $y\in X$, there exists a subsequence $\{n_i\}$
such that $y_{n_i}\to y$ as $i\to\infty$.
Then for each $x\in X$, $d(x,y_{n_i})\to d(x,y)$ as $i\to\infty$.
By the Lebesgue dominated convergence theorem,
for each $z\in G_0$,
\[
\int_X d(x,y)\dd\mu_z(x) =
\lim_{i\to\infty} \int_X d(x,y_{n_i}) \dd\mu_z(x)\geq c(z).
\]
Thus, for each $s\in G_0$,
\begin{align*}
\int_{X\times X} d(x_1,x_2) \dd \lambda_s(x_1,x_2)
&= \int_G \Bigl(\int_{X\times X} d(x,y) \dd \mu_z\times \mu_{z+s}(x,y) \Bigr) {\rm d} \nu(z)\\
&= \int_G \Bigl(\int_X \Bigl( \int_X d(x,y) \dd\mu_z(x)\Bigr) \dd \mu_{z+s}(y) \Bigr) \dd \nu(z)\\
&\ge  \int_G \int_X c(z) {\rm d} \mu_{z+s}(y)  \dd \nu(z)\\
&= \int_G c(z)\dd\nu(z)=\alpha>0.
\end{align*}
This finishes the proof.
\end{proof}

Now we are able to show the converse of Proposition~\ref{prop:discrete-specturm-HWY}.

\begin{thm}\label{thm:mu-discrete-spectrum}
Let $(X,T)$ be an invertible t.d.s.\ and
$\mu$ be an invariant measure on $(X,T)$.
If $\mu$ has bounded complexity
with respect to  $\{\bar{d}_n\}$, then it has discrete spectrum.
\end{thm}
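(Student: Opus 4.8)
\emph{Plan of the proof.} The argument runs in two stages. First, using the ergodic decomposition, one reduces to the statement that $\mu$-almost every ergodic component of $\mu$ has discrete spectrum; for this, bounded complexity is used only through $\mu$-mean equicontinuity. Second — and this is where the real work lies — one uses bounded complexity a second time to \emph{glue} the components, i.e.\ to pass from ``almost every ergodic component has discrete spectrum'' to ``$\mu$ has discrete spectrum''. As preliminaries: by Theorem~\ref{nov-5}, $\mu$ also has bounded complexity with respect to $\{\hat d_n\}$, so by Theorem~\ref{thm:mu-eq-in-mean} the map $T$ is $\mu$-mean equicontinuous; and, equipping $X\times X$ with the metric $((x_1,y_1),(x_2,y_2))\mapsto d(x_1,x_2)+d(y_1,y_2)$, for which the induced mean metric $\bar d_n$ is the sum of the two copies of $\bar d_n$, any set that $\bar d_n$-$\ep$-spans a subset of $\mu$-measure $>1-\ep$ by $\le C$ balls yields (via products of the centres) a set that $\bar d_n$-$2\ep$-spans a subset of $(\mu\times\mu)$-measure $>1-2\ep$ by $\le C^{2}$ balls, so $\mu\times\mu$ on $(X\times X,T\times T)$ has bounded complexity with respect to $\{\bar d_n\}$; consequently, by Birkhoff's pointwise ergodic theorem, $\bar d_n(x,y)\to d^{*}(x,y)$ for $(\mu\times\mu)$-a.e.\ $(x,y)$, where $d^{*}$ is $(T\times T)$-invariant.

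\emph{Stage 1.} I would first record the ergodic case, which also serves as the template for Stage~2. Suppose $\rho$ is ergodic, has bounded complexity with respect to $\{\bar d_n\}$, but does not have discrete spectrum. By Proposition~\ref{thm:ergodic-measure-not-null} there is $\alpha>0$ with $\liminf_n\bar d_n(x,y)>\alpha$ for $(\rho\times\rho)$-a.e.\ $(x,y)$. Fix $\ep_0\in(0,\alpha/4)$ and let $C=C(\ep_0)$ be the corresponding complexity bound for $\rho$. For each $n$ pick $\bar d_n$-balls $B^{(n)}_1,\dots,B^{(n)}_{m_n}$ of radius $\ep_0$ with $m_n\le C$ whose union has $\rho$-measure $>1-\ep_0$; disjointifying them and applying the Cauchy--Schwarz inequality to their masses, the set $\bigcup_i B^{(n)}_i\times B^{(n)}_i$ has $(\rho\times\rho)$-measure at least $c_0:=(1-\ep_0)^{2}/C>0$, uniformly in $n$. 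Since $\liminf_n\bar d_n>\alpha$ a.e., the set $G_N:=\{(x,y):\bar d_k(x,y)\ge\alpha/2\text{ for all }k\ge N\}$ has $(\rho\times\rho)$-measure tending to $1$; pick $N$ with $(\rho\times\rho)(G_N)>1-c_0/2$. Then for every $n\ge N$ the set $G_N$ meets some $B^{(n)}_i\times B^{(n)}_i$, producing a pair $(a,b)$ with $\bar d_n(a,b)<2\ep_0<\alpha/2$ and simultaneously $\bar d_n(a,b)\ge\alpha/2$ --- a contradiction. Hence every such $\rho$ has discrete spectrum. Now take the ergodic decomposition $\mu=\int_{\Omega}\mu_\omega\,dP(\omega)$, choose compact mean-equicontinuous sets $K_k$ with $\mu(K_k)>1-4^{-k}$, and apply the Borel--Cantelli lemma to the sets $\{\omega:\mu_\omega(K_k)\le 1-2^{-k}\}$ (whose $P$-measures are $<2^{-k}$ by Markov's inequality): for $P$-a.e.\ $\omega$, each $\mu_\omega$ is $\mu_\omega$-mean equicontinuous, hence by Theorem~\ref{thm:mu-eq-in-mean} and Theorem~\ref{nov-5} has bounded complexity with respect to $\{\bar d_n\}$, hence --- being ergodic --- discrete spectrum (this last step is also \cite[Corollary~39]{Felip}).

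\emph{Stage 2 and the main obstacle.} The point is that ``$P$-a.e.\ $\mu_\omega$ has discrete spectrum'' does \emph{not} already imply that $\mu$ has discrete spectrum: on $[0,1]\times\mathbb{T}$, the skew product $(\omega,x)\mapsto(\omega,x+\alpha(\omega))$ with $\alpha$ continuous and injective has only discrete-spectrum ergodic components yet is not itself of discrete spectrum --- and, consistently with the theorem, one checks it does not have bounded complexity with respect to $\{\bar d_n\}$. So bounded complexity must be brought in again. Let $\pi\colon(X,\mu,T)\to(Z,\nu,S)$ be the Kronecker (maximal discrete-spectrum) factor and $\mu=\int_Z\mu_z\,d\nu(z)$ the disintegration; then $\mu$ has discrete spectrum if and only if $\mu_z$ is a point mass for $\nu$-a.e.\ $z$. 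Assuming this fails, $\mu_z$ is non-Dirac on a set of positive $\nu$-measure. The plan is to run the argument of Proposition~\ref{thm:ergodic-measure-not-null} \emph{relative to the base $Z$}: using a description of the ergodic components of $\mu\times\mu$ in terms of $Z$ in the spirit of \cite[\S 4.3.1, Theorem~18]{HK18}, together with the ergodic decomposition of $\mu$, one aims to produce $\alpha>0$ and a set of pairs $(x,y)$ of $(\mu\times\mu)$-measure close to $1$ on which $\liminf_n\bar d_n(x,y)>\alpha$; the covering/Cauchy--Schwarz contradiction of Stage~1 then applies to $\mu$ itself. The principal obstacle is exactly this non-ergodic analogue of Proposition~\ref{thm:ergodic-measure-not-null}: when $(Z,\nu,S)$ is not an ergodic group rotation but a general --- possibly quite intricate, see \cite{Kwi8, EN} --- non-ergodic system of discrete spectrum, one must extract a \emph{uniform} lower bound on the spread of the fibre measures $\mu_z$ over a subset of $Z$ of definite mass, large enough that the ``bad'' set of pairs has $(\mu\times\mu)$-measure close to $1$ so that the covering argument bites; in the ergodic case this set automatically has full measure, which is what makes that case straightforward.
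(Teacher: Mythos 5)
Your Stage~1 is correct: the covering/Cauchy--Schwarz contradiction is a clean variant of the paper's own Claim~1, and you have correctly located the real difficulty, namely passing from ``almost every ergodic component has discrete spectrum'' to ``$\mu$ has discrete spectrum''. But Stage~2 is not a proof; it is an announced plan that stops exactly at the obstacle you yourself name. The route you propose --- disintegrating over the Kronecker factor of the non-ergodic system and running a relative version of Proposition~\ref{thm:ergodic-measure-not-null} --- needs a description of the ergodic components of $\mu\times\mu$ fibered over that factor, and the $\lambda_s$-decomposition used in Proposition~\ref{thm:ergodic-measure-not-null} depends on the Kronecker factor being an ergodic group rotation $(G,+,\tau)$; for a non-ergodic $\mu$ the maximal discrete-spectrum factor carries no such group structure (the paper itself points to \cite{Kwi8, EN} for how intricate it can be), and you never extract the uniform lower bound on the spread of the fibre measures that your covering argument requires. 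As written, the theorem is not established.

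The paper closes this gap by an entirely different device (its Claim~2), which is the idea missing from your proposal. Fix a Lipschitz function $f$ and a compact mean-equicontinuous set $M$ with $\mu(M)>1-\tau$ consisting of generic points whose ergodic components have discrete spectrum. Choose a countable dense subset $\{z_m\}$ of $M$; by discreteness of the spectrum of each $\mu_{z_m}$ and a diagonal argument, extract from any $\{t_n\}\subset\mathbb{Z}$ a subsequence $\{s_n\}$ such that $U^{s_n}f$ is Cauchy in every $L^2(\mu_{z_m})$. The Lipschitz bound $|f(x)-f(y)|\le Cd(x,y)$ converts the mean-equicontinuity estimate into
\[
\Vert U^{s_j}f-U^{s_k}f\Vert^2_{L^2(\mu_z)}\le C^2\Bigl(2\ep+\Vert U^{s_j}f-U^{s_k}f\Vert^2_{L^2(\mu_{z_m})}\Bigr)
\]
whenever $d(z,z_m)$ is small, so the Cauchy property propagates from the dense set to every $z\in M$; integrating over $M^*=\bigcup_{n\in\mathbb{Z}}T^{-n}M$ shows that $f\cdot\mathbf{1}_{M^*}$ is almost periodic in $L^2(\mu)$. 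Letting $\tau\to0$ and using the density of Lipschitz functions in $L^2(\mu)$ gives $H_c=L^2(\mu)$. This gluing step uses no structure theory of non-ergodic Kronecker factors at all --- only the discrete spectrum of countably many well-chosen components plus mean equicontinuity --- and it is what you would need to supply to complete your argument.
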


\begin{proof}
Let $A$ be the collection of points $z\in X$
which are generic to some ergodic measure,
that is,
for each $z\in A$, $\frac{1}{n}\sum_{i=0}^{n-1}\delta_{T^iz}\to \mu_z$
as $n\to\infty$
and $\mu_z$ is ergodic.
Then $A$ is measurable and $\mu(A)=1$.
We first prove the following Claim.

\medskip
\noindent\textbf{Claim 1}: $\mu_z$ has discrete spectrum for $\mu$-a.e.\ $z\in A$.
\begin{proof}[Proof of the Claim 1]
Let $A_1=\{z\in A\colon \mu_z$ does not has discrete spectrum$\}$.
We need to prove that $A_1$ is measurable and has zero $\mu$-measure.
The ergodic decomposition of $\mu$ can be expressed as
 $\mu=\int_A \mu_z \dd\mu(z)$ (see e.g.\ \cite[Theorem 6.4]{M87}).
For $k\in\N$ and $z\in A$, put
\[ F_k(z)=
\mu_z\times\mu_z\biggl(\biggl\{(x,y)\in X\times X\colon
\liminf_{n\to\infty}\frac{1}{n}\sum_{i=0}^{n-1} d(T^ix,T^iy)>\frac{1}{k}
\biggr\}\biggr).\]
As $\int_A \mu_z \times \mu_z d\mu(z)$ is an invariant measure on $(X\times X,T\times T)$,
for each $k\in\N$, $F_k$ is a measurable function on $A$.
By Theorem~\ref{thm:ergodic-measure-not-null},
we know that
$\mu_z$ does not have discrete spectrum if and only if there exists
$k\in\N$ such that $F_k(z)=1$.
Then $A_1= \bigcup_{k\in\N}\{z\in A\colon F_k(z)=1\}$
and it is measurable.
Now it is sufficient to prove $\mu(G_1)=0$.
If not, then $\mu(A_1)>0$ and there exists $k\in\N$
such that $\mu(\{z\in A\colon F_k(z)=1\})>0$.
Let $A_2=\{z\in A\colon F_k(z)=1\}$ and put $r=\mu(A_2)$. Then for every $z\in A_2$ and
for $\mu_z\times\mu_z$-a.e.\ $(x,y)\in X\times X$,
\begin{equation}
\liminf_{n\to\infty}\frac{1}{n}\sum_{i=0}^{n-1} d(T^ix,T^iy)>\frac{1}{k}.
\label{jian-sen-33}
\end{equation}

By	Theorems~\ref{thm:mu-eq-in-mean} and \ref{nov-5},
$(X,T)$ is $\mu$-mean equicontinuous.
Then there exists $M\subset X$ with $\mu(M)>1-\frac{r^2}{4}$
such that $M$ is mean equicontinuous.
By regularity of $\mu$, we can assume that $M$ is compact and $M\subset A$.
Let $A_3=\{z\in A\colon \mu_z(M)>1-\frac{r}{2}\}$.
Then $A_3$ is measurable, as $\mu=\int_A \mu_z \dd\mu(z)$ is the ergodic decomposition of $\mu$.
We have
\begin{align*}
1-\frac{r^2}{4}<\mu(M)=\int_A \mu_z(M)\dd\mu(z)
&\leq \int_{A_3} \mu_z(M)\dd\mu(z) +\int_{A\setminus A_3} \mu_z(M)\dd\mu(z) \\
&\leq \mu(A_3)+ (1-\mu(A_3))(1-\frac{r}{2}),
\end{align*}
which implies that $\mu(A_3)>1-\frac{r}{2}$.
Then $\mu(A_2\cap A_3)>r+(1-\frac{r}{2})-1=\frac{r}{2}>0$.
Pick $z\in A_2\cap A_3$. As  $M$ is mean equicontinuous,
there exists a $\delta>0$ such that
for any $x,y\in M$ with $d(x,y)<\delta$,
\[
\limsup_{n\to\infty}\frac{1}{n}\sum_{i=0}^{n-1}d(T^ix,T^iy)<\frac{1}{k}.
\]
As $M$ is compact, there exists a finite open cover
$\{U_1,U_2,\dotsc, U_m\}$ of $M$ with diameter less than $\delta$.
Since $z\in A_3$, $\mu_z(M)>1-\frac{r}{2}$.
Then there exists $i\in \{1,\dotsc,m\}$ such that $\mu_z(U_i)>0$
and also $\mu_z\times\mu_z(U_i)>0$.
Note that the diameter of $U_i$ is less than $\delta$, so
for any $x,y\in U_i$,
\[
\limsup_{n\to\infty}\frac{1}{n}\sum_{i=0}^{n-1}d(T^ix,T^iy)<\frac{1}{k},
\]
which contradicts to (\ref{jian-sen-33}). This ends the proof of Claim 1.
\end{proof}

Let
$$A_0=\{z\in A\colon  \mu_z\ \text{ has discrete spectrum}\}.$$
By Claim 1, we have $\mu(A_0)=1$.
Let $f\in C(X)$ be a  Lipschitz continuous function on $X$.
Then there exists $C >0$ such that $|f(x)-f(y)|\leq C d(x,y)$ for all $x,y\in X$.

Recall that the associated operator $U\colon L^2(\mu)\to L^2(\mu)$ is defined by $Uf=f\circ T$ for all $f\in L^2(\mu)$.
Inspired by the idea of  \cite[Theorem 1]{S82}, we have the following Claim.

\medskip
\noindent\textbf{Claim 2}:
For any $\tau>0$,
there exists $M^*\in\mathcal{B}$ with $\mu(M^*)>1-\tau$
such that $f\cdot \mathbf{1}_{M^*}$ is almost periodic,
i.e., $\{U^{n}(f\cdot \mathbf{1}_{M^*})\colon n\in\mathbb{Z}\}$ is precompact in $L^2(\mu)$.
\begin{proof}[Proof of the Claim 2]
By	Theorems~\ref{thm:mu-eq-in-mean} and \ref{nov-5},
$(X,T)$ is $\mu$-mean equicontinuous.
Fix a constant $\tau>0$.
Then there exists $M\subset X$ with $\mu(M)>1-\tau$
such that $M$ is mean equicontinuous.
Let $M^*=\bigcup_{n\in\mathbb{Z}} T^{-n}M$.
To show that $f\cdot \mathbf{1}_{M^*}$ is almost periodic,
we only need to prove for any sequence $\{t_n\}$ in $\mathbb{Z}$
there exists a subsequence $\{s_n\}$ of $\{t_n\}$  such that
$\{U^{s_n}(f\cdot \mathbf{1}_{M^*})\}$  is a Cauchy sequence in $L^2(\mu)$.

\medskip
By regularity of $\mu$, we can assume that $M$ is compact and $M\subset A_0$.
Choose a countable dense subset $\{z_m\}$ in $M$.
As $\mu_{z_1}$ has discrete spectrum, there exists a subsequence $\{t_{n,1}\}$ of $\{t_n\}$ such that
$\{U^{t_{n,1}}f\colon n\in\N \}$ is a Cauchy sequence in $L^2(\mu_{z_1})$. Inductively assume that
for each $i\le m-1$ we have defined $\{t_{n,i}\}$ (which is a subsequence of $\{t_{n,{i-1}}\}$) such that
$\{U^{t_{n,i}}f\colon n\in\N \}$ is a Cauchy sequence in $L^2(\mu_{z_i})$.
As $\mu_{z_m}$ has discrete spectrum,
there exists a subsequence $\{t_{n,m}\}$ of $\{t_{n,m-1}\}$ such that
$\{U^{t_{n,m}}f\colon n\in \N \}$ is a
Cauchy sequence in $L^2(\mu_{z_m})$.
Let $s_n=t_{n,n}$ for $n\geq 1$.
By the usual diagonal procedure,
$\{U^{s_n}f\colon n\in \N \}$ is a
Cauchy sequence in $L^2(\mu_{z_m})$ for all $m\geq 1$.

\medskip

Fix $\ep>0$. As $M$ is mean equicontinuous in $(X,T)$,
there exists $\delta>0$ such that for any $x,y\in M$ with $d(x,y)<\delta$,
\[\limsup_{n\to\infty}\frac{1}{n}\sum_{i=0}^{n-1}\Bigl( d(T^ix,T^iy) \Bigr)^2<\ep.\]
Fix $z\in M$. There exists $m\in \N$ such that $d(z,z_m)<\delta$.
For any $j\neq k\in \N$,
\begin{align*}
\Vert U^{s_j}f-U^{s_k}f\Vert_{L^2(\mu_z)}^2&
=\int_X |U^{s_j}f-U^{s_k}f|^2 \dd\mu_z
=\lim_{n\to\infty}\frac{1}{n}\sum_{i=0}^{n-1}|f(T^{s_j+i}z)-f(T^{s_k+i}z)|^2\\
&\leq C^2 \lim_{n\to\infty}\frac{1}{n}\sum_{i=0}^{n-1}\Bigl(d(T^{s_j+i}z,T^{s_k+i}z)\Bigr)^2\\
&\leq C^2\biggl( \limsup_{n\to\infty}\frac{1}{n}\sum_{i=0}^{n-1}\Bigl(d(T^{s_j+i}z,T^{s_j+i}z_m)\Bigr)^2\\
&\qquad\qquad \qquad+\limsup_{n\to\infty}\frac{1}{n}\sum_{i=0}^{n-1}\Bigl(d(T^{s_k+i}z,T^{s_k+i}z_m)\Bigr)^2\\
&\qquad\qquad\qquad +\lim_{n\to\infty}\frac{1}{n}\sum_{i=0}^{n-1}\Bigl(d(T^{s_j+i}z_m,T^{s_j+i}z_m)\Bigr)^2\biggr)\\
&\leq C^2\Bigl(2\ep + \Vert U^{s_j}f-U^{s_k}f\Vert_{L^2(\mu_{z_m})}^2\Bigr).
\end{align*}
As $\{U^{s_n}f\colon n\in \N \}$ is a
Cauchy sequence in $L^2(\mu_{z_m})$ for all $m\geq 1$,
$\{U^{s_n}f\colon n\in \N \}$ is also a
Cauchy sequence in $L^2(\mu_{z})$.
Then for each $z\in M$,
\[ \lim_{N\to\infty}\sup_{j,k\geq N} \int_X |U^{s_j}f-U^{s_k}f|^2 \dd\mu_z =0. \]
For each $y\in M^*$, there exists $n\in\mathbb{Z}$ and $z\in M$
such that $T^nz=y$.
Then $\mu_z=\mu_y$.
For $z\in  M^*$, put
\[f_N(z)=\sup_{j,k\geq N} \int_X |U^{s_j}f-U^{s_k}f|^2 \dd\mu_z.\]
By the dominated convergence theorem,
\begin{equation}\label{dmt}
\lim_{N\to\infty}\int_{M^*}f_N(z) \dd\mu(z)=\int_{M^*}\lim_{N\to\infty} f_N(x)\dd\mu(z)=0.
\end{equation}
It is easy to see that
\begin{align*}
\sup_{j,k\geq N}  \int_{M^*} \int_X |U^{s_j}f-U^{s_k}f|^2 \dd\mu_z \dd\mu(z)&
\le  \int_{M^*} \biggl(\sup_{j,k\geq N} \int_X |U^{s_j}f-U^{s_k}f|^2 \dd\mu_z\biggr) \dd\mu(z)\\
&=\int_{M^*}f_N(z) \dd\mu(z),
\end{align*}
from where we deduce
 \[ \lim_{N\to\infty}
\biggl(\sup_{j,k\geq N}  \int_{M^*} \int_X |U^{s_j}f-U^{s_k}f|^2 \dd\mu_z \dd\mu(z)\biggr)=0\ \  \ \text{by}\ (\ref{dmt}).\]
As $\int_{M^*}g \dd \mu=\int_{M^*}(\int g \dd\mu_z) \dd\mu(z)$ for any $g\in L^2(\mu)$ we have
 \[ \lim_{N\to\infty}
\biggl(\sup_{j,k\geq N}  \int_{M^*}  |U^{s_j}f-U^{s_k}f|^2 \dd\mu(z)\biggr)=0. \]
Note that $T(M^*)=M^*$, so
\[\int_{M^*} |U^{s_j}f-U^{s_k}f|^2 \dd\mu(z)=
\int |U^{s_j}(f\cdot \mathbf{1}_{M^*}) -U^{s_k}(f\cdot \mathbf{1}_{M^*})|^2 \dd\mu.\]
Thus $\{U^{s_n}(f\cdot \mathbf{1}_{M^*})\}$  is a Cauchy sequence in $L^2(\mu)$,
which ends the proof of Claim 2.
\end{proof}

Note that
The collection of almost periodic functions $g$ is closed in $L^2(\mu)$.
As the measure of ${M^*}$ in Claim 2 can be arbitrary close to $1$,
$f$ is also an almost periodic function in  $L^2(\mu)$.
As the collection of Lipschitz continuous functions in dense in $C(X)$ (see e.g. \cite[Theorem 11.2.4.]{D})
and $C(X)$ is dense in $L^2(\mu)$,
then for every function $g\in L^2(\mu)$ is almost periodic in $L^2(\mu)$,
that is $\mu$  has discrete spectrum.
\end{proof}

\begin{rem}
After we have  finished this paper,
Nhan-Phu Chung informed us that
Theorem~\ref{thm:mu-discrete-spectrum} was also proved in \cite[Theorem~3.2]{VZP12} with a different method.
Note that an invariant measure $\mu$ has bounded complexity
with respect to  $\{\bar{d}_n\}$ in our sense
if and only if every $\ep>0$, the scaling sequences
with respect to $\mu$ and $d$ are bounded as in \cite[Definition~3.1]{VZP12}.
It should be noticed that in the introduction of  \cite{VZP12}
it requires a mild condition that the standard (Lebesgue) space $(X,\mu)$ is non-atomic.
\end{rem}

In Theorem~\ref{thm:mu-discrete-spectrum},
we show that if an invariant measure $\mu$ of a t.d.s.\ $(X,T)$ has bounded complexity with respect to $\{\bar{d}_n\}$, then almost all the ergodic
components in the ergodic decomposition of $\mu$ have discrete spectrum.
In the following remark we provide an example which shows that
it may happen there are uncountably many pairwise non-isomorphic ergodic components in the ergodic decomposition, and the set of unions of all eigenvalues of the ergodic components are countable.

\begin{rem}
The space $X$ is the product  $ \{0,1\}^\N \times (S^1) ^{\N}$.
Let $\{\tau_i\colon  i\in \N\}$
be a family of irrational numbers independent over
the rational numbers.
The measure $\mu$ is the product of the Bernoulli measure $(\frac{1}{2}, \frac{1}{2})$ on $\{0,1\}^\N $ and
the product measure $\lambda_\N$ on $(S^1) ^{\N}$,
where each coordinate is equipped with the Lebesgue measure $\lambda$.

The transformation $T:X\ra X$ is defined in the following way: let $\omega= ( \omega_i)_{i\geq 1}  \in \{0,1\}^\N$ and $w=(w)_{i\geq 1}\in (S^1)^{\N}$.
Define $T(\omega,w)=(\omega, w')$, where $(w')_i=w_i$ if $\omega_i=0$ and $(w')_i=T_iw_i$ if $\omega_i=1$, where $T_i$  is the translation by $\tau_i$ on $(S^1)_i$.
It is easy to see that $\{\omega\} \times (S^1) ^{\N}$ is $T$-invariant for any $\omega\in \{0,1\}^\N$.

 Let the distance on $X$ be the sum of the distances
 $$d_1(\omega , \omega')= \sum_{i\geq 1} \frac{1}{2^i}\mid \omega_i -\omega'_i \mid\
 \text{and}\ d_2 (s,s')= \sum_{i\geq 1} \frac{1}{2^i} d'(s_i, s'_i),$$ where $d'$ is the distance on the circle $S^1$, so that $d((\omega,s),
 (\omega',s'))= d_1(\omega , \omega') + d_2 (s,s')$.

 It is not difficult to see that $T$  has bounded  complexity
with respect to  $\{\bar{d}_n\}$.
 Note that the ergodic components are $\{\omega\} \times (w', \Pi_{i \mid\omega{_i=1}} (S^1)_i)$, where $w'\in \Pi_{i \mid\omega{_i=0}} (S^1)_i.$
\end{rem}

\begin{rem}
Assume that $(X,T)$ is a minimal system with bounded complexity with respect to $\{\bar d_n\}$ for an invariant measure $\mu$.
It is interesting to know whether almost all the ergodic measures in the ergodic decomposition of $\mu$ are isomorphic.
After the first draft version of this paper was finished, Cyr and Kra informed us in~\cite{CK18} that there exists an example dose not satisfy the condition, see Proposition~\ref{prop:Cyr-Kra} in Appendix B.
\end{rem}

\appendix
\section{Two examples}
The aim of this  appendix is to construct two examples
announced in Section 3. We remark that the measure complexity for a minimal distal system can be very complicated,
see for example \cite{HXY}.

\subsection{The construction of the system in Proposition~\ref{prop:key-example1}}
We view the unit circle $\mathbb{T}$ as $\mathbb{R}/\mathbb{Z}$
and also as $[0,1)\pmod 1$.
For $a\in \R$ we let $\Vert a\Vert =\min\{|a-z|: z\in\mathbb{Z}\}$ which induces a distance on $\mathbb{T}$.
Let $\alpha\in\mathbb{R}\setminus\mathbb{Q}$ be an irrational number and $R_\alpha: \mathbb{T}\rightarrow \mathbb{T},x\rightarrow x+\alpha$
the rotation on $\mathbb{T}$ by $\alpha$.  In this subsection we will construct a skew product map $T:\T^2\rightarrow \T^2$
with $T(x,y)=(x+\alpha,y+h(x))$ for any $x,y\in\T$, where $h:\T\rightarrow \mathbb{R}$ is continuous and will be defined below.

\medskip
The general idea to construct $h$ is the following. First we choose some disjoint intervals (see (\ref{disjoint11intervals})) in $\T$
and define $h_1:\T\rightarrow \mathbb{R}$ such that $h_1$ takes positive values in some intervals,
negative values in the rest intervals and zero in rest points (see (\ref{eq-first1}) below). This results that
$\sum_{i=0}^{n-1}h_1(R_\alpha^ix)$ is small under some conditions (see Lemmas \ref{lm} and \ref{lm-2}).
Then we choose smaller disjoint intervals (see (\ref{disjoint-2})) in $\T$ and define $h_2:\T\rightarrow \mathbb{R}$ such that $h_2$ takes smaller
positive values in some intervals, smaller negative values in the rest intervals and zero in the rest pionts. We do this for each $h_k$
and finally we put $h=\sum_{k=1}^\infty h_k$. Note that  for $n\in\mathbb{N}$ and $x,y\in \mathbb{T}$,
$T^n(x,y)= \Bigl(R_\alpha^nx,y+\sum_{i=0}^{n-1}h(R_\alpha^ix)\Bigr).$
When calculating $\sum_{i=0}^{n-1}h(R_\alpha^ix)$, we only need
to take care $\sum_{i=0}^{n-1}(\sum_{j=1}^kh_j)(R_\alpha^ix)$ when $k$ large enough (see (\ref{k-infty})). By careful choosing $h_k$
we can show the minimality, non-equicontinuity and non-unique ergodicity. Now let us begin the construction.

\medskip

Let $\eta=\frac{1}{100}$, $M_1=10$ and $N_1=10M_1$.
As $\alpha$ is irrational,
the two-side orbit $\{n\alpha\colon n\in\mathbb{Z}\}$
of $0$ under the rotation $R_\alpha$ are pairwise distinct.
Choose $\delta_1>0$ small enough such that
the intervals
$$ [i\alpha-\delta_1,i\alpha +\delta_1],\ i=-1,0,1,\cdots,2N_1$$
are pairwise disjoint on $\mathbb{T}$.
Put
\begin{equation}\label{disjoint11intervals}
E_1=\bigcup_{i=0}^{2N_1-1} [i\alpha-\delta_1,i\alpha+\delta_1],
\end{equation}
and  $$F_1=\{i\alpha-\delta_1,i\alpha+\delta_1 \colon
i=0,1,\cdots,2N_1-1\}.$$

The total length of intervals in $E_1$ is $4N_1\delta_1$.
Shrinking $\delta_1$ if necessary, we can require $4N_1\delta_1<\eta/2$.
Put $$l_1=\min\{\Vert x-y\Vert :x,y\in F_1, x\neq y\}$$
and $\gamma_0=2l_1$.

For $k=2,3,4,\cdots$, we will define
$M_k, N_k,\delta_k, E_k, F_k, l_k$ and $\gamma_{k-1}$  by induction.
Assume that $M_{k-1},N_{k-1},\delta_{k-1}, E_{k-1}$, $F_{k-1}, l_{k-1}$ and $\gamma_{k-2}$ have been defined
such that the total length of intervals in  $E_{k-1}$ is less than $\frac{\eta}{2^{k-1}}$.
As $R_\alpha$ is uniquely ergodic on $\mathbb{T}$,
choose $M_k>N_{k-1}$ large enough such that
for any $x,y\in\mathbb{T}$, one has
\begin{equation}\label{20-1}
\{0\le i\le M_k-1\colon R_\alpha^ix\in(y,y+l_{k-1})\}\neq\emptyset.
\end{equation}
and for any $n\ge M_k$ and any $x\in\mathbb{T}$,
\begin{equation}\label{20-2}
\frac{1}{n}\#(\{0\le i\le n-1\colon
R_\alpha^ix \in E_{k-1}\})<\frac{\eta}{2^{k-1}}.
\end{equation}
Let $N_k=10^kM_k$.
Choose $\delta_k>0$ small enough such that
$$\{i\alpha \pm\delta_{k} \colon i=0,1,2,\cdots,2N_{k}-1\}\cap F_{k-1}=\emptyset,$$
and
$$[i\alpha-\delta_k,i\alpha+ \delta_k],\ i=-1,0,1,\cdots,2N_k$$
are pairwise disjoint intervals on $\mathbb{T}$.
Choose $0<\gamma_{k-1}<\delta_{k-1}$ small enough such that
\begin{align}
\label{fc-1}
[i\alpha-\gamma_{k-1},i\alpha+\gamma_{k-1}],\
-2N_{k}\le i\le 2N_{k}+2N_{k-1}
\end{align}
are pairwise disjoint intervals on $\mathbb{T}$.
Put
\begin{equation}\label{disjoint-2}
E_k=\bigcup_{i=0}^{2N_k-1}[i\alpha-\delta_k,i\alpha+\delta_k]
\end{equation}
and
\begin{equation}
F_k=F_{k-1}\cup\{i\alpha+\delta_k,i\alpha-\delta_k
\colon i=0,1,\cdots,2N_k-1\}
\end{equation}
The total length of intervals in  $E_{k}$ is $4N_k\delta_k$.
Shrinking $\delta_k$ if necessary,
we can require $4N_k\delta_k<\frac{\eta}{2^{k}}$.
Let
\begin{align}\label{lk-def}
l_k=\min\biggl(\Bigl\{\Vert x-y\Vert \colon x,y\in F_k, x\neq y\Bigr\}\cup\Bigl\{\frac{\gamma_i}{2k^2}:i=1,2,\cdots,k-1\Bigr\}\biggr).
\end{align}
This finishes the induction.

\medskip
For each $k\in\N$, define $h_{k}^*,h_k:\R\rightarrow [-1/2,1/2)$ such that
\[
h_{k}^*(x) =
\begin{cases}
\frac{1}{N_{k}}(1-|\frac{x-m}{\gamma_{k}}|), & \text{for } x\in[m-\gamma_{k},m+\gamma_{k}]\textrm{ with }m\in\mathbb{Z},\\
0,& \text{otherwise},
\end{cases}
\]
and
\[h_{k}(x)=\sum_{i=0}^{N_{k}-1}h_{k}^*(x-i\alpha)-\sum_{i=N_{k}}^{2N_{k}-1}h_{k}^*(x-i\alpha).\]
As the intervals in $E_k$ are pairwise disjoint and $\gamma_k<\delta_k$,
it is easy to check that for any $x\in \mathbb{R}$,
\begin{align}\label{eq-first1}
h_{k}(x) = \begin{cases}
h_{k}^*(x-i\alpha), & \text{if } x\in [i\alpha-\gamma_{k},i\alpha+\gamma_{k}] \pmod 1,\  i=0,1,2,\cdots,N_{k}-1,\\
-h_{k}^*(x-i\alpha), & \text{if } x\in [i\alpha-\gamma_{k},i\alpha+\gamma_{k}] \pmod 1,\ i=N_{k},\cdots,2N_{k}-1,\\
0,& \text{otherwise}.
\end{cases}
\end{align}
In particular, $h_k(x)=0$ for $x\not\in E_k \pmod 1$,
$$h_k(i\alpha)=
\begin{cases}
\frac{1}{N_{k}}, &\text{ for }  i=0,1,\dotsc, N_{k}-1,\\
-\frac{1}{N_{k}}, &\text{ for } i=N_{k},N_k+1,\dotsc,2N_{k}-1,
\end{cases}$$
and for any $z\in [-\delta_x,\delta_x]$,
\begin{equation}
\sum_{s=0}^{2N_k-1}h_k(R^s_\alpha z)=0.
\label{eq:hk-2nk-1=0}
\end{equation}
It is also easy to see that for any $x\in\mathbb{R}$,
\begin{equation}
|h_k(x)|\leq \frac{1}{N_k}=\frac{1}{10^kM_k}<\frac{1}{10^k},
\label{eq:hk-bounded}
\end{equation}
and  $h_k$ is Lipschitz continuous with a Lipschitz constant $\frac{1}{N_k\gamma_k}$, that is, for any $x,y\in\mathbb{R}$,
\begin{equation}
|h_k(x)-h_k(y)|\leq \frac{1}{N_k\gamma_k}|x-y|. \label{eq:Lip-continuous}
\end{equation}
For any $x\in \R$, we have $h_k(x+1)=h_k(x)$,
so we can regard $h_{k}$ as a function from $\mathbb{T}$ to $\mathbb{R}$.
Now,
define $h\colon \mathbb{T}\rightarrow \mathbb{R}$
as for each $x\in \mathbb{T}$
$$h(x)=\sum_{k=1}^\infty h_k(x).$$
It is easy to see that $h$ is continuous since
\[\sum_{k=1}^\infty |h_k(x)|\leq\sum_{k=1}^\infty \frac{1}{10^k}<\frac{1}{2}.\]
For $k\geq 1$,
we set
\begin{equation}
h_{1,k}(x)=\sum_{i=1}^{k}h_i(x)\text{ and } h_{k,\infty}(x)=\sum_{i=k}^{\infty}h_i(x).
\end{equation}
Then
$$h(x)=h_{1,k}(x)+h_{k+1,\infty}(x)$$
and
\begin{equation}\label{k-infty}
\Vert h_{k,\infty}(x)\Vert
\leq \sum_{i=k}^\infty \Vert h_{i}(x)\Vert
\leq\sum_{i=k}^\infty \frac{1}{10^iM_i}
\leq \frac{1}{M_k} \sum_{i=k}^\infty \frac{1}{10^i}
=\frac{1}{M_k} \frac{1}{9\cdot 10^{k-1}}<\frac{1}{9\cdot 10^{k-1}}.
\end{equation}
Finally, we define a skew product map as follows:
\[T\colon \mathbb{T}^2\to \mathbb{T}^2,\quad
(x,y)\mapsto (x+\alpha,y+h(x)).\]
It is clear that $T$ is continuous.
We will show that the system $(\mathbb{T}^2,T)$ is as required.
By the definition, it is clear that  $(\mathbb{T}^2,T)$
is distal.

For any real function $g$ on $\T$ and $x\in \mathbb{T}$, we set $H_0^g\equiv 0$
and
$$H_n^{g}(x):=\sum_{i=0}^{n-1}g(R_\alpha^ix)$$
for $n\geq 1$.
Recall that $h(x)=\sum_{k=1}^\infty h_k(x)$, so
$$H_n^{h}(x)=\sum_{i=1}^{\infty }H_n^{h_i}(x)
=H_n^{h_{1,k}}(x)+H_n^{h_{k+1,\infty}}(x).
$$
We choose a compatible metric $d$ on $\T^2$ by
$$d((x_1,y_1),(x_2,y_2)):=\Vert x_1-x_2\Vert +\Vert  y_1-y_2 \Vert,$$
for any $(x_1,y_1),(x_2,y_2)\in \mathbb{T}^2$.
We remark that for $n\in\mathbb{N}$ and $x,y\in \mathbb{T}$
$$
T^n(x,y)= \Bigl(R_\alpha^nx,y+\sum_{i=0}^{n-1}h(R_\alpha^ix)\Bigr)=
(R_\alpha^nx,y+H_n^h(x)).$$
To estimate the orbit of $(x,y)$, the key point is to control $H_n^h(x)$. The following two lemmas will be useful in the estimation.
\begin{lem}\label{lm}
	Assume $x\in\mathbb{T}$, $i\in \mathbb{N}$ and $m\in\mathbb{N}$.
	If $x, R_\alpha^mx\in E_i^c\cup[-\delta_i,\delta_i]$,
	then one has $$H_m^{h_i}(x)=0.$$
\end{lem}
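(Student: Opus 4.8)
The plan is to exploit the block structure built into the definition of $h_i$ together with the pairwise disjointness of the $2N_i+2$ intervals $[l\alpha-\delta_i,l\alpha+\delta_i]$, $l=-1,0,1,\dots,2N_i$, that is guaranteed by the construction. First I would note that $h_i$ vanishes off $E_i\pmod 1$, so in $H_m^{h_i}(x)=\sum_{j=0}^{m-1}h_i(R_\alpha^j x)$ only those indices $j$ with $R_\alpha^j x\in E_i$ can contribute; all other terms are $0$.

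Next I would set up the ``passage'' structure of the orbit relative to $E_i$. By disjointness, each point of $E_i$ lies in a unique interval $[l\alpha-\delta_i,l\alpha+\delta_i]$ with $0\le l\le 2N_i-1$; I write such a point as $l\alpha+z$ with $\Vert z\Vert\le\delta_i$ and call $l$ its index. Then $R_\alpha$ sends $l\alpha+z$ to $(l+1)\alpha+z$, which again lies in $E_i$ with index $l+1$ when $l\le 2N_i-2$ but lies in $[2N_i\alpha-\delta_i,2N_i\alpha+\delta_i]$, hence outside $E_i$, when $l=2N_i-1$; likewise the $R_\alpha$-preimage $(l-1)\alpha+z$ lies in $E_i$ when $l\ge 1$ and outside $E_i$ when $l=0$. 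Consequently, for $j\ge 1$, if $R_\alpha^{j-1}x\notin E_i$ and $R_\alpha^j x\in E_i$ then $R_\alpha^j x$ must have index $0$, and the orbit then occupies indices $0,1,\dots,2N_i-1$ at the consecutive times $j,j+1,\dots,j+2N_i-1$ and leaves $E_i$ at time $j+2N_i$; along this block $R_\alpha^{j+s}x=s\alpha+z$ for the fixed $z=R_\alpha^j x$ with $\Vert z\Vert\le\delta_i$. So the visits of the orbit to $E_i$ organize into such length-$2N_i$ blocks, with the only caveat that a block containing time $0$ is visible, for $j\ge 0$, starting from whatever index $x$ happens to have.

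The crux — and the step I expect to require the most care — is to deduce from the hypothesis that $\{0\le j\le m-1:R_\alpha^j x\in E_i\}$ is a disjoint union of \emph{complete} blocks, none truncated by the endpoints $0$ or $m$. The assumption $x,R_\alpha^m x\in E_i^c\cup[-\delta_i,\delta_i]$ says precisely that neither $x$ nor $R_\alpha^m x$ has index in $\{1,\dots,2N_i-1\}$. So if $j=0$ lies in a block, then $x\in E_i$ forces $x$ to have index $0$, i.e.\ the block starts at time $0$; and if a block $\{j_1,\dots,j_1+2N_i-1\}$ with $0\le j_1\le m-1$ reached past time $m-1$, then $m-j_1\in\{1,\dots,2N_i-1\}$ and $R_\alpha^m x=(m-j_1)\alpha+z$ would have index $m-j_1\neq 0$, a contradiction. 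Hence every block meeting $\{0,\dots,m-1\}$ is entirely contained in it. Finally, along each complete block $\sum_{s=0}^{2N_i-1}h_i(s\alpha+z)=0$ by (\ref{eq:hk-2nk-1=0}), while $h_i(R_\alpha^j x)=0$ for every $j$ with $R_\alpha^j x\notin E_i$; summing over $j=0,1,\dots,m-1$ yields $H_m^{h_i}(x)=0$.
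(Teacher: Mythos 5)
Your proposal is correct and follows essentially the same route as the paper: both arguments show that the visits of the orbit to $E_i$ within $\{0,\dotsc,m-1\}$ decompose into complete blocks of length $2N_i$ starting from the base interval $[-\delta_i,\delta_i]$ (the paper proves this as a set identity indexed by $J=\{0\le j\le m-1: R_\alpha^j x\in[-\delta_i,\delta_i]\}$, you phrase it via the ``index'' of a point of $E_i$), with the hypothesis on $x$ and $R_\alpha^m x$ ruling out truncated blocks at either end, and then both conclude by the cancellation identity \eqref{eq:hk-2nk-1=0} together with $h_i=0$ off $E_i$.
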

\begin{proof}
	Let $J=\{0\le j\le m-1: R_\alpha^jx\in
	[-\delta_i,\delta_i]\}$.
	We first claim that
	$$ \{0\le k \le m-1:R_\alpha^k x\in E_i\}=
	\bigcup_{j\in J}\{j+l:0\le l\le 2N_i-1\}.$$
	To see this equality firstly we note that if $j\in J$,
	then $R_\alpha^jx\in [-\delta_i,\delta_i]$.
	This implies that
	$R_\alpha^{j+l}x\in[l\alpha-\delta_i,l\alpha+\delta_i]\subset E_i$
	for $0\le l\le 2N_i-1$.
	Since $j\le m-1$ and $R_\alpha^mx\in E_i^c\cup[\delta_i,\delta_i]$,
	one has $j+ 2N_i-1\leq m$ and then
	$\{j+l:0\le l\le 2N_i-1\}\subset\{0,1,2,\cdots,m-1\}$.
	Thus,
	$\{0\le k \le m-1:R_\alpha^k x\in E_i\}\supset \cup_{j\in J}\{j+l:0\le l\le 2N_i-1\}.$
	
	Conversely, if $k\in \{0,1,2,\cdots,m-1\}$ with $R^k_\alpha x\in E_i$. This means that $R^k_\alpha x\in[s\alpha-\delta_i,s\alpha+\delta_i]$
	for some $0\le s\le   2N_i-1$.
	If $k<s$, then
	$x\in [(s-k)\alpha-\delta_i,(s-k)\alpha+\delta_i]$, which  contradicts to the assumption $x\in E_i^c\cup[-\delta_i,\delta_i]$.
	This implies $k\ge s$.
	Hence we have $k-s\in J$ and $k\in \{(k-s)+l:0\le l\le 2N_i-1\}$.
	Thus we get $\{0\le k\le m-1:R_\alpha^kx\in E_i\}\subset \cup_{j\in J}\{j+l:0\le l\le 2N_i-1\}$. This
	proves the claim.
	
	By the claim, we have
	\begin{align*}
	H_{m}^{h_i}(x)&=\sum_{\substack{0\le k\le m-1\\R_\alpha^kx\notin E_i}}h_i(R_\alpha^kx)+\sum_{\substack{0\le k\le m-1\\R_\alpha^kx\in E_i}}h_i(R_\alpha^kx)\\
	&=0+\sum_{j\in J}\sum_{l=0}^{2N_i-1}h_i(R_\alpha^{l}(R_\alpha^jx))=0. \qquad\text{by \eqref{eq:hk-2nk-1=0}}
	\end{align*}
	This finishes the proof of Lemma \ref{lm}.
\end{proof}

\begin{lem}\label{lm-2}
	Assume $x\in\mathbb{T}$,  $m, k\in\N$ and $1\le j\le k-1$.
	If $\Vert m\alpha\Vert <l_k$ and $x,R_\alpha^mx\in [i\alpha-\delta_j,i\alpha+\delta_j]$ for some $0\le i\le 2N_j-1$,  then one has $$\Vert H_m^{h_j}(x)\Vert <\frac{1}{k^2}.$$
\end{lem}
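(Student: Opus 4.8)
The plan is to reduce the estimate to Lemma~\ref{lm} after recentring the orbit at $0\cdot\alpha$. Write $x=i\alpha+z$ and $R_\alpha^m x=i\alpha+z'$ with $z,z'\in[-\delta_j,\delta_j]$; since $z'=z+m\alpha$ in $\mathbb{T}$ we have $\|z'-z\|=\|m\alpha\|<l_k$. First I would note $m>2N_j$: the set $F_k$ contains $\delta_k$ and $p\alpha+\delta_k$ for $1\le p\le 2N_k-1$, and $2N_j\le 2N_k-1$ (since $N_j\le N_{k-1}<N_k$), so \eqref{lk-def} forces $l_k\le\|p\alpha\|$ for all $1\le p\le 2N_j$; as $\alpha$ is irrational these are positive, so $\|m\alpha\|<l_k$ rules out $1\le m\le 2N_j$, and in particular $i\le 2N_j-1<m$. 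Now set $y:=R_\alpha^{-i}x$, so $y=z\in[-\delta_j,\delta_j]$ and $R_\alpha^m y=R_\alpha^{-i}(R_\alpha^m x)=z'\in[-\delta_j,\delta_j]$. Hence $y,R_\alpha^m y\in E_j^c\cup[-\delta_j,\delta_j]$, and Lemma~\ref{lm} (with index $j$) gives $H_m^{h_j}(y)=0$.

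The second step is to compare $H_m^{h_j}(x)$ with $H_m^{h_j}(y)$. Put $\sigma_{i'}=1$ for $0\le i'\le N_j-1$ and $\sigma_{i'}=-1$ for $N_j\le i'\le 2N_j-1$; the description \eqref{eq-first1} of $h_j$ shows $h_j(i'\alpha+w)=\sigma_{i'}h_j^*(w)$ for every $0\le i'\le 2N_j-1$ and every $\|w\|\le\delta_j$ (note $h_j^*(w)=0$ once $\gamma_j<\|w\|\le\delta_j$), and $\sum_{i'=0}^{2N_j-1}\sigma_{i'}=0$. Since $x=R_\alpha^i y$ we have $H_m^{h_j}(x)=\sum_{t=i}^{m+i-1}h_j(R_\alpha^t y)$, so, using $i<m$,
\[
H_m^{h_j}(x)-H_m^{h_j}(y)=\sum_{t=m}^{m+i-1}h_j(R_\alpha^t y)-\sum_{t=0}^{i-1}h_j(R_\alpha^t y).
\]
For $0\le t\le i-1$ one has $R_\alpha^t y=t\alpha+z$, and for $m\le t\le m+i-1$ one has $R_\alpha^t y=(t-m)\alpha+z'$; since the indices $t$ and $t-m$ lie in $\{0,\dots,2N_j-1\}$, the term-by-term formula above gives $\sum_{t=0}^{i-1}h_j(R_\alpha^t y)=h_j^*(z)\,\Sigma$ and $\sum_{t=m}^{m+i-1}h_j(R_\alpha^t y)=h_j^*(z')\,\Sigma$, where $\Sigma:=\sum_{i'=0}^{i-1}\sigma_{i'}$. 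Combined with $H_m^{h_j}(y)=0$ this yields $H_m^{h_j}(x)=\Sigma\bigl(h_j^*(z')-h_j^*(z)\bigr)$. (When $i=0$ this reads $H_m^{h_j}(x)=0$, which is also what Lemma~\ref{lm} gives directly for $x$.)

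To finish I would bound the two factors. One has $|\Sigma|\le N_j$, since $\Sigma=i$ when $i\le N_j$ and $\Sigma=2N_j-i$ otherwise; and $h_j^*$, a tent of height $\tfrac1{N_j}$ and half-width $\gamma_j$, is $\tfrac{1}{N_j\gamma_j}$-Lipschitz on $\mathbb{T}$, so $|h_j^*(z')-h_j^*(z)|\le\tfrac{1}{N_j\gamma_j}\|z'-z\|=\tfrac{1}{N_j\gamma_j}\|m\alpha\|$. Therefore, using $\|m\alpha\|<l_k$ and $l_k\le\tfrac{\gamma_j}{2k^2}$ from \eqref{lk-def} (valid since $1\le j\le k-1$),
\[
\|H_m^{h_j}(x)\|\le|H_m^{h_j}(x)|\le N_j\cdot\frac{\|m\alpha\|}{N_j\gamma_j}=\frac{\|m\alpha\|}{\gamma_j}<\frac{l_k}{\gamma_j}\le\frac{1}{2k^2}<\frac1{k^2},
\]
which is exactly the asserted bound.

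I expect the only delicate point to be the bookkeeping in the second step: one must verify that the two boundary segments of the orbit of $y$ visit precisely the block points $0\cdot\alpha,\dots,(i-1)\alpha$ with the fixed offsets $z$ and $z'$, so that \eqref{eq-first1} applies to each term and the two partial sums telescope against one another (and so that the degenerate case $i=0$ causes no trouble). Once that combinatorial picture is in place, the appeal to Lemma~\ref{lm} and the final Lipschitz estimate are routine.
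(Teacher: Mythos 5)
Your proof is correct and follows essentially the same route as the paper's: establish that $m$ exceeds $2N_j$ from $\|m\alpha\|<l_k$, recentre at $y=R_\alpha^{-i}x$ so that Lemma~\ref{lm} (together with the cancellation \eqref{eq:hk-2nk-1=0}) kills everything except the two boundary sums of length $i$, and then control those via the Lipschitz constant $\tfrac{1}{N_j\gamma_j}$ and the choice $l_k\le\tfrac{\gamma_j}{2k^2}$ in \eqref{lk-def}. The only cosmetic difference is that you evaluate the boundary sums exactly as $\Sigma\bigl(h_j^*(z')-h_j^*(z)\bigr)$ where the paper bounds the paired differences term by term, which costs it a harmless factor of $2$.
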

\begin{proof}
	First, by \eqref{lk-def}, one has
	\begin{align*}
	l_k&\le \min\{\Vert (i\alpha+\delta_k)-(j\alpha+\delta_k)\Vert :0\le i<j\le 2N_k-1\}\\
	&=\min_{0\le r\le2N_k-1}\Vert r\alpha\Vert .
	\end{align*}
	Thus, $m\ge2N_k$ since $\Vert m\alpha\Vert <l_k$. Next, by the construction of $E_j$, one has
	$$R_\alpha^{2N_j-i}x,R_\alpha^{m-i}x=R_\alpha^{m-2N_j}(R_\alpha^{2N_j-i}x)\in E_j^c\cup[-\delta_j,\delta_j]$$
	since $x,R_\alpha^mx\in [i\alpha-\delta_j,i\alpha+\delta_j]$.
	By Lemma \ref{lm}, one has $H^{h_j}_{m-2N_j}(R_\alpha^{2N_j-i}x)=0$ and
	\begin{align*}
	H_{m}^{h_j}(x)&=H^{h_j}_{2N_j-i}(x)+H^{h_j}_{m-2N_j}(R_\alpha^{2N_j-i}x)+H^{h_j}_{i}(R_\alpha^{m-i}x)\\
	&=H^{h_j}_{2N_j-i}(x)+H^{h_j}_{i}(R_\alpha^{m-i}x)\\
	&=(H^{h_j}_{2N_j-i}(x)+H^{h_j}_{i}(R_\alpha^{-i}x))+(H^{h_j}_{i}(R_\alpha^{m-i}x)-H^{h_j}_{i}(R_\alpha^{-i}x))\\
	&=H^{h_j}_{2N_j}(R_\alpha^{-i}x)+(H^{h_j}_{i}(R_\alpha^{m-i}x)-H^{h_j}_{i}(R_\alpha^{-i}x)).
	\end{align*}
	Notice that $R_\alpha^{-i}x\in [-\delta_j,\delta_j]$. By \eqref{eq:hk-2nk-1=0}, one has $$H^{h_j}_{2N_j}(R_\alpha^{-i}x)=\sum_{s=0}^{2N_j-1}h_j(R_\alpha^{s}(R_\alpha^{-i}x))=0.$$
	This implies that
	\begin{align*}
	\Vert H_{m}^{h_j}(x)\Vert &\le\Vert H^{h_j}_{i}(R_\alpha^{m-i}x)-H^{h_j}_{i}(R_\alpha^{-i}x)\Vert \\
	&\le\sum_{s=0}^{i-1}\Vert h_j(R_\alpha^{m-i+s}x)-h_j(R_\alpha^{-i+s}x)\Vert \\
	&\le i \cdot l_k\cdot\frac{1}{N_j\gamma_j},
	\end{align*}
	where the last inequality follows from  \eqref{eq:Lip-continuous} and $\Vert R_\alpha^{-i+s}x-R_\alpha^{m-i+s}x\Vert= \Vert m\alpha\Vert<l_k$ for $s=0,1,2\cdots,i-1$.
	Finally, by \eqref{lk-def},
	\begin{align*}
	\Vert H_{m}^{h_j}(x)\Vert \le 2N_j\cdot \frac{\gamma_j}{2k^2}\cdot \frac{1}{N_j\gamma_j}=\frac{1}{k^2}.
	\end{align*}
	This finishes the proof of Lemma \ref{lm-2}.
\end{proof}

\begin{prop}
	$(\mathbb{T}^2,T)$ is minimal.
\end{prop}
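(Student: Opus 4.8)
The plan is to deduce minimality of $(\mathbb{T}^2,T)$ from transitivity, using distality, and to prove transitivity by a staged navigation argument built on the explicit ``tent'' shape of the blocks $h_k$ together with Lemmas~\ref{lm} and~\ref{lm-2}.

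\emph{Reduction.} Since $(\mathbb{T}^2,T)$ is distal, every orbit closure is a minimal set, so it is enough to produce one point with dense orbit; take $z_0=(0,0)$ and put $\Omega=\overline{\mathrm{Orb}(z_0,T)}$, a minimal (hence fully $T$-invariant) subset of $\mathbb{T}^2$. As $(\mathbb{T},R_\alpha)$ is minimal, $\Omega$ projects onto $\mathbb{T}$; and since $T(\{x\}\times\mathbb{T})=\{x+\alpha\}\times\mathbb{T}$, once $\Omega$ contains a single fibre $\{0\}\times\mathbb{T}$ it contains $\{n\alpha\}\times\mathbb{T}$ for all $n$ and so equals $\mathbb{T}^2$. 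Thus it suffices to show: for every $y^*\in\mathbb{T}$ and every $\epsilon>0$ there is $n\in\mathbb{N}$ with $\Vert n\alpha\Vert<\epsilon$ and $\Vert H^h_n(0)-y^*\Vert<\epsilon$, where $H^h_n(x)=\sum_{i=0}^{n-1}h(R^i_\alpha x)$.

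\emph{Construction of $n$.} Fix $y^*$ and $\epsilon$, and choose a scale $k$ so large that $\tfrac1k$, $\tfrac{2N_k}{N_{k+1}}$, and $\tfrac{1}{9\cdot 10^{k-1}}$ are all $<\epsilon/10$ (and, say, $\epsilon>10^{-k}$). Build $n=n_1+j+n_3$ in three steps. First, by minimality of $R_\alpha$, pick $n_1$ with $x':=n_1\alpha\in(-\tfrac{\epsilon}{10}\gamma_k,\tfrac{\epsilon}{10}\gamma_k)$; the fibre value $y':=H^h_{n_1}(0)$ reached here is irrelevant. Secondly, run along the descending branch of the $k$-th block: for $x'$ this close to $0$ one has the exact identity $H^{h_k}_j(x')=(2N_k-j)h^*_k(x')$ for $N_k\le j\le 2N_k$, where $N_k h^*_k(x')=1-|x'|/\gamma_k>1-\tfrac{\epsilon}{10}$; so as $j$ runs through $(N_k,2N_k)$ the values $H^{h_k}_j(x')$ are spaced $\approx 1/N_k$ apart and sweep an arc of $\mathbb{R}/\mathbb{Z}$ of length $>1-\tfrac{\epsilon}{10}$. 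Over an interval of admissible $j$'s of length $\gtrsim\epsilon N_k$ I can therefore pick $j$ with $H^{h_k}_j(x')$ within $\tfrac{\epsilon}{5}$ of $y^*-y'$ and, in addition, with $x'+j\alpha\notin\bigcup_{m<k}(E_m\setminus[-\delta_m,\delta_m])$ (a set of density $<\eta$, so easily avoided inside that interval). Thirdly, again by minimality of $R_\alpha$, pick $n_3$ of size $\ll N_k$ with $n\alpha=(n_1+j+n_3)\alpha\in[-\epsilon,\epsilon]\cap\bigcap_{m<k}(E_m^c\cup[-\delta_m,\delta_m])$; this target set has measure $\ge 2\epsilon(1-\eta)>0$ because the part of $E_m$ inside $[-\epsilon,\epsilon]$ away from $[-\delta_m,\delta_m]$ is small, its total length over $m<k$ being $<2\epsilon\eta$.

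\emph{Estimate.} The fibre coordinate of $T^{n}z_0$ equals $y'+H^{h_k}_j(x')+\sum_{m\ne k}H^{h_m}_j(x')+\sum_{m\ge 1}H^{h_m}_{n_3}(x'+j\alpha)$, and $y'+H^{h_k}_j(x')$ is within $\tfrac{\epsilon}{5}$ of $y^*$. For $m<k$, both $x'$ and $x'+j\alpha$ lie in $E_m^c\cup[-\delta_m,\delta_m]$, so $H^{h_m}_j(x')=0$ by Lemma~\ref{lm}; and both $x'+j\alpha$ and $n\alpha$ lie in $E_m^c\cup[-\delta_m,\delta_m]$, so $H^{h_m}_{n_3}(x'+j\alpha)=0$ by Lemma~\ref{lm} (one may instead close up along a close return and invoke Lemma~\ref{lm-2}, which gives $<1/k^2$ per scale). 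For $m>k$, $|H^{h_m}_j(x')|\le j/N_m$ and $|H^{h_m}_{n_3}(x'+j\alpha)|\le n_3/N_m$, summing over $m>k$ to less than $\epsilon/10$ by the choice of $k$ and \eqref{k-infty}. Finally $|H^{h_k}_{n_3}(x'+j\alpha)|\le n_3/N_k$: since $j$ lies on the descending branch and $j+n_3<2N_{k+1}+2N_k$, each term $h_k(x'+(j+s)\alpha)$, $0\le s<n_3$, equals $-h^*_k(x')$ or $0$. Hence $\Vert H^h_n(0)-y^*\Vert<\epsilon$ while $\Vert n\alpha\Vert<\epsilon$, so $\Omega=\mathbb{T}^2$.

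\emph{Main obstacle.} The real content is the joint bookkeeping: choosing $j$ (on the descending branch, with the right tent value) and $n_3$ (short, landing in $[-\epsilon,\epsilon]$ away from the $E_m$'s) so that the running base point is in ``good position'' for every scale $m<k$ at the two moments where Lemma~\ref{lm} or Lemma~\ref{lm-2} is applied, thereby \emph{using} the built-in cancellations $\sum_{s=0}^{2N_m-1}h_m(R^s_\alpha z)=0$ rather than the hopelessly weak bound $|h_m|\le 1/N_m$. Verifying that the nested choices $M_k$, $N_k=10^kM_k$, $\delta_k$, $\gamma_k$, $l_k$, the disjointness requirements \eqref{disjoint11intervals} and \eqref{fc-1}, and the uniform–distribution estimates \eqref{20-1} and \eqref{20-2} make all of these constraints simultaneously attainable is where the work lies; granting it, the construction above shows $(\mathbb{T}^2,T)$ is minimal.
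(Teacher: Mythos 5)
Your argument is correct in outline and its engine is the same as the paper's: bring the base point within $O(\epsilon\gamma_k)$ of $0$, exploit the fact that $H^{h_k}_j(x')$ ramps linearly with slope $\pm h_k^*(x')\approx\pm 1/N_k$ through essentially all of $\mathbb{T}$ as the orbit crosses the $k$-th block, kill the scales $m<k$ with the block cancellation of Lemma~\ref{lm}, and bound the scales $m>k$ by $\sum_{m>k} j/N_m$. What you do differently is the reduction: the paper shows directly that every orbit is dense, approximating an arbitrary target $(x',y')$ by landing the base coordinate in an arc $(a,a+l_{k-1})$ near $x'$ chosen (via $F_{k-1}$ and its conditions (i)--(ii)) to sit coherently inside or outside every $E_i$ with $i\le k-1$ --- which is exactly why the paper also needs Lemma~\ref{lm-2}, for the case where that arc lies in a non-central interval of some $E_i$. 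You instead invoke distality to reduce to producing one dense orbit, and further to filling the single fibre over $0$, which lets you keep every checkpoint ($x'$, $x'+j\alpha$, $n\alpha$) either in $E_m^c$ or in the central interval $[-\delta_m,\delta_m]$, so Lemma~\ref{lm} alone suffices; you also use the descending rather than ascending branch of $h_k$, which changes nothing essential. This is a legitimate and somewhat cleaner route, but two points must be made explicit for it to close. First, your checkpoints near $0$ lie in $[-\delta_m,\delta_m]$ for all $m<k$ only if the central intervals are nested, i.e.\ $\gamma_k\le\delta_m$ for all $m<k$; the construction as written does not literally force $\delta_k$ to decrease in $k$, so you should add (harmlessly) the requirement $\delta_k<\delta_{k-1}$ when shrinking $\delta_k$ --- the paper's routing through $(a,a+l_{k-1})$ is designed precisely so that no such condition at the scales $m<k$ is needed at the point $x_1$ itself. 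Second, the existence of $j$ and of $n_3$ with all the stated properties should be quantified via \eqref{20-1} and \eqref{eq:Ejk} over windows of length at least $M_k$: your admissible window for $j$ has length about $\tfrac{2}{5}\epsilon N_k$, which exceeds $M_k$ once $10^{-k}<\epsilon$ only after adjusting constants, and $n_3$ is found by applying \eqref{20-1} to an arc of length $l_{k-1}$ inside $[-\delta_{k-1},\delta_{k-1}]$. With these supplied, your proof is complete.
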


\begin{proof}
We need to show  every point $(x,y)$ has a dense orbit.
Fix $(x,y)\in\mathbb{T}^2$, $0<\epsilon<1$ and $k\in\mathbb{N}$.
There exists $n_1\in\mathbb{N}$ such that $R_\alpha^{n_1}x\in[-\epsilon\gamma_k,\epsilon\gamma_k]$.
Let $(x_1,y_1)=T^{n_1}(x,y)$. Then $x_1=R^{n_1}_\alpha x$
and $\Vert x_1\Vert \leq \epsilon\gamma_k$.
	
Now fix $(x',y')\in\mathbb{T}^2$.
Note that points in $F_{k-1}$ divide the unite circle into open arcs
with length not less than $l_{k-1}$.
The collection of these arcs is denoted by $\mathcal{F}_{k-1}$.  There exists $(a_1,a_2)\in\mathcal{F}_{k-1}$ such that $x'\in [a_1,a_2)$.
As $(a_1,a_2)\cap F_{k-1}=\emptyset$,
either $(a_1,a_2)\subset E_{k-1}$ or $(a_1,a_2)\subset E_{k-1}^c$.
If $(a_1,a_2)\subset E_{k-1}$, then $[a_1,a_2)\subset [j\alpha-\delta_{k-1},j\alpha+\delta_{k-1}]$ for some $0\leq j\leq 2N_{k-1}-1$,
and we take $a=j\alpha+\delta_{k-1}$.
If $(a_1,a_2)\subset E_{k-1}^c$, we take $a\in [a_1,a_2)$ such that $x'\in[a,a+l_{k-1})\subset[a_1,a_2)$ since the length of $[a_1,a_2)$ is not less than $l_{k-1}$.	
Note that in any case $(a,a+l_{k-1})$ is a subset of some
$(b_1,b_2)\in \mathcal{F}_{k-1}$.
For any $1\le i\le k-2$, as $F_{i}\subset F_{k-1}$,
$(b_1,b_2)\cap E_{i}=\emptyset$.
Then $(b_1,b_2)$ is either a subset of $[j\alpha-\delta_i,j\alpha+\delta_i]$ for some $0\le j\le 2N_i-1$ or a subset of $E_i^c$. Summing up the above arguments, one has:
\begin{itemize}
\item[(i)] $(a,a+l_{k-1})\subset E_{k-1}^c$
	and $\min\{\Vert x'-x''\Vert \colon x''\in(a,a+l_{k-1})\}\le 2\delta_{k-1}$;
\item[(ii)]
	for all $1\le i\le k-2$, either $(a,a+l_{k-1})\subset [j\alpha-\delta_i,j\alpha+\delta_i]$ for some $0\le j\le 2N_i-1$ or $(a,a+l_{k-1})\subset E_i^c$.
\end{itemize}
	
By \eqref{20-1}, there exists an integer $n_2\in[0,M_k)$
such that
$R_\alpha^{n_2}(x_1)\in (a,a+l_{k-1})$.
Suppose
$$y'-(y_1+H_{n_2}^{h}(x_1))=b\, (\text{mod } 1).$$
Then $b\in[0,1).$
By \eqref{20-1}, there exists an integer $n_3\in[[10^kb]M_k,([10^kb]+1)M_k)$ such that $n_3\ge n_2$ and $R_\alpha^{n_3}(x_1)\in (a,a+l_{k-1})$.
Note that $n_3< 10^kM_k=N_k$ and  $$b-\frac{2}{10^k}\le\frac{([10^kb]-1)M_k}{N_k}\le\frac{n_3-n_2}{N_k}\le\frac{([10^kb]+1)M_k}{N_k}\le b+\frac{2}{10^k}.$$
By (i) and Lemma \ref{lm}, one has $H_{n_3-n_2}^{h_{k-1}}(R_\alpha^{n_2}x_1)=0$.
By (ii) and Lemmas \ref{lm} and \ref{lm-2}, one has
	$$\Vert H_{n_3-n_2}^{h_i}(R_\alpha^{n_2}x_1)\Vert <\frac{1}{(k-1)^2}$$
for $1\le i\le k-2$.
Thus, one has
\begin{align*}
	\Vert y'-(y_1+H_{n_3}^h(x_1))\Vert
	&=\Vert y'-(y_1+H_{n_2}^{h}(x_1))-H_{n_3-n_2}^{h}(R_\alpha^{n_2}x_1)\Vert \\
	&= \Bigl \Vert b-\sum_{i=k}^\infty H_{n_3-n_2}^{h_i}(R_\alpha^{n_2}x_1)-\sum_{i=1}^{k-2} H_{n_3-n_2}^{h_i}(R_\alpha^{n_2}x_1) \Bigr\Vert \\
	&\le\Vert b-H_{n_3-n_2}^{h_k}(R_\alpha^{n_2}x_1)\Vert \\
	&\qquad\qquad + \sum_{i=k+1}^\infty \Vert H_{n_3-n_2}^{h_i}(R_\alpha^{n_2}x_1)\Vert +\sum_{i=1}^{k-2} \Vert H_{n_3-n_2}^{h_i}(R_\alpha^{n_2}x_1)\Vert \\
	&\le\Vert b-H_{n_3-n_2}^{h_k}(n_2\alpha)\Vert +\Vert H_{n_3-n_2}^{h_k}(n_2\alpha)-H_{n_3-n_2}^{h_k}(n_2\alpha+x_1)\Vert \\
	&\qquad\qquad +\sum_{i=k+1}^\infty\frac{n_3-n_2}{N_i}+(k-2)\cdot \frac{1}{(k-1)^2}\\
	&\le\Bigl\Vert b-(n_3-n_2)\frac{1}{N_k}\Bigr\Vert +\epsilon\frac{n_3-n_2}{N_k}
	+\sum_{i=k+1}^\infty\frac{1}{10^i}+\frac{1}{k-1}\\
	&\le \frac{3}{10^k}+2\epsilon+\frac{1}{k-1}.
\end{align*}
We deduces
\begin{align*}
	d((x',y'),T^{n_3+n_1}(x,y))&=d((x',y'),T^{n_3}(x_1,y_1))\\
	&=\Vert x'-R_\alpha^{n_3}x_1\Vert +\Vert y'-(y_1+H_{n_3}^h(x_1))\Vert \\
	&\le l_{k-1}+2\delta_{k-1}+\frac{3}{10^k}+2\epsilon+\frac{1}{k-1}.
\end{align*}
This implies that $(x',y')\in \overline{\mathrm{Orb}((x,y),T)}$ if we let $k\rightarrow+\infty$ and $\epsilon\rightarrow 0$.
Hence $(\mathbb{T}^2,T)$ is minimal.
\end{proof}

For $1\le j\le k$, we let
\[E_{j,k}=\bigcup_{i=j}^kE_i. \]
By \eqref{20-2}, for any $n\geq M_{k+1}$ and $x\in \mathbb{T}$,
\begin{equation} \label{eq:Ejk}
\frac{1}{n}\#(\{0\le i\le n-1\colon R_\alpha^ix \in E_{j,k}\})
<\sum_{i=j}^k \frac{\eta}{2^{i}}<\frac{\eta}{2^{j-1}}<\frac{1}{2}.
\end{equation}

\begin{prop}
	$(\mathbb{T}^2,T)$ is not equicontinuous.
\end{prop}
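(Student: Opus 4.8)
The plan is to reduce equicontinuity of $(\T^2,T)$ to an equicontinuity property of the family $\{H_n^h\colon\T\to\T\}_{n\ge 0}$, and then to break it along the pairs $(0,\gamma_k)$ as $k\to\infty$. Since $T^n(x,y)=(R_\alpha^n x,\,y+H_n^h(x))$ and $R_\alpha$ is an isometry of $\T$, for all $x,x'\in\T$ and all $n$ one has $d\bigl(T^n(x,0),T^n(x',0)\bigr)=\Vert x-x'\Vert+\Vert H_n^h(x)-H_n^h(x')\Vert$; hence, were $(\T^2,T)$ equicontinuous, there would be $\delta>0$ such that $\Vert x-x'\Vert<\delta$ forces $\Vert H_n^h(x)-H_n^h(x')\Vert<\tfrac15$ for all $n$. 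As $\gamma_k\to 0$, it therefore suffices to produce, for every large $k$, an integer $n_k$ with $\Vert H_{n_k}^h(0)-H_{n_k}^h(\gamma_k)\Vert\ge\tfrac15$.

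Fix a large $k$ and look for $n_k$ in the window $W_k=[N_k/4,\,3N_k/4)$. This window is chosen so that, on the one hand, $n_k\le N_k$ makes the level-$k$ term large, while on the other hand $n_k\ge N_k/4\ge M_k$ makes the sparseness estimate \eqref{eq:Ejk} applicable. Apply \eqref{eq:Ejk} with bottom index $1$ and top index $k-1$: for every $n\ge M_k$ and every base point $z\in\T$, the number of $0\le i\le n-1$ with $R_\alpha^i z\in E_{1,k-1}$ is less than $\eta n$. Taking $n=\lceil 3N_k/4\rceil$ and $z=0$, respectively $z=\gamma_k$, one sees that fewer than $\tfrac{3\eta}{2}N_k<\tfrac12 N_k=\#(W_k)$ integers of $W_k$ fail to satisfy \emph{both} $R_\alpha^{n}0\notin E_{1,k-1}$ and $R_\alpha^{n}\gamma_k\notin E_{1,k-1}$; fix $n_k\in W_k$ for which both hold.

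Now decompose $H_{n_k}^h(0)-H_{n_k}^h(\gamma_k)=\sum_{j\ge 1}\bigl(H_{n_k}^{h_j}(0)-H_{n_k}^{h_j}(\gamma_k)\bigr)$ and estimate the three ranges of $j$. For $1\le j\le k-1$: since $0<\gamma_k<\delta_k<\delta_{k-1}<\cdots$, both $0$ and $\gamma_k$ lie in $[-\delta_j,\delta_j]$, whereas $R_\alpha^{n_k}0$ and $R_\alpha^{n_k}\gamma_k$ lie in $E_j^c$ by the choice of $n_k$; so Lemma~\ref{lm} gives $H_{n_k}^{h_j}(0)=H_{n_k}^{h_j}(\gamma_k)=0$. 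For $j=k$: each $i$ with $0\le i\le n_k-1$ satisfies $i\le N_k-1$, and since $0,\gamma_k\in[-\gamma_k,\gamma_k]$ the points $R_\alpha^i0=i\alpha$ and $R_\alpha^i\gamma_k=i\alpha+\gamma_k$ both lie in $[i\alpha-\gamma_k,i\alpha+\gamma_k]$, where \eqref{eq-first1} says $h_k$ equals $h_k^{*}(\cdot-i\alpha)$; hence $H_{n_k}^{h_k}(0)=n_k h_k^{*}(0)=n_k/N_k$ while $H_{n_k}^{h_k}(\gamma_k)=n_k h_k^{*}(\gamma_k)=0$. For $j\ge k+1$: by \eqref{eq:hk-bounded} and $M_{k+1}>N_k$, exactly as in \eqref{k-infty}, $\bigl|\sum_{j\ge k+1}\bigl(H_{n_k}^{h_j}(0)-H_{n_k}^{h_j}(\gamma_k)\bigr)\bigr|\le 2n_k\sum_{j\ge k+1}\tfrac1{N_j}\le\tfrac{2}{9\cdot 10^{k}}$. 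Combining, and using that $n_k/N_k\in[\tfrac14,\tfrac34)$ gives $\Vert n_k/N_k\Vert\ge\tfrac14$, we obtain $\Vert H_{n_k}^h(0)-H_{n_k}^h(\gamma_k)\Vert\ge\tfrac14-\tfrac{2}{9\cdot 10^{k}}>\tfrac15$, and the proposition follows.

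The step I expect to be the main obstacle is the second one: locating $n_k$ of size comparable to $N_k$ whose $n_k$-th $R_\alpha$-image misses $E_{1,k-1}$ simultaneously for the base points $0$ and $\gamma_k$. This is exactly what the sparseness inequality \eqref{eq:Ejk} is meant to deliver, but one has to make sure the window $W_k$ is long enough for \eqref{eq:Ejk} to bite — which is guaranteed by the rapid growth $N_k=10^kM_k$ with $M_k$ taken large — and to exploit the uniformity of \eqref{eq:Ejk} in the base point so as to kill the level-$j$ sums ($j<k$) for $\gamma_k$ as well as for $0$. Everything else is a direct unwinding of the definitions of $h_k$ and $h_k^{*}$ through \eqref{eq-first1}, together with the elementary bounds \eqref{eq:hk-bounded}, \eqref{k-infty} and Lemma~\ref{lm}.
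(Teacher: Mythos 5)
Your argument follows essentially the same strategy as the paper's proof: choose two nearby points, use Lemma~\ref{lm} together with the sparseness estimate \eqref{eq:Ejk} to annihilate the contributions of $h_1,\dotsc,h_{k-1}$, read off a main term of size $n_k/N_k$ from the explicit form of $h_k$ on its support, and absorb the tail $\sum_{j>k}h_j$ via \eqref{eq:hk-bounded}. The quantitative steps all check out: the counting argument in the window $W_k$ (two applications of \eqref{eq:Ejk} remove fewer than $\tfrac{3\eta}{2}N_k+2\eta$ integers from a window of about $N_k/2$), the identity $H_{n_k}^{h_k}(0)-H_{n_k}^{h_k}(\gamma_k)=n_k/N_k\in[\tfrac14,\tfrac34)$ coming from $h_k^*(0)=1/N_k$ and $h_k^*(\gamma_k)=0$, and the tail bound $2/(9\cdot 10^k)$. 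The one substantive difference from the paper is your choice of witness pair $(0,\gamma_k)$, where the paper uses $0$ and $x'=\delta_k+\tfrac12 l_k$ (a point \emph{off} the support of $h_k$) and first flows both points by a time $n_1<M_k$ so that they land in $E_{1,k-1}^c$ before the Birkhoff sum is started. Your variant is slicker but leans on one unstated hypothesis: to apply Lemma~\ref{lm} at the base point $\gamma_k$ you need $\gamma_k\in E_j^c\cup[-\delta_j,\delta_j]$ for every $j<k$, which you justify by writing $\gamma_k<\delta_k<\delta_{k-1}<\cdots$. The construction never actually asserts that the sequence $\{\delta_k\}$ is decreasing — each $\delta_k$ is only subjected to upper bounds such as $4N_k\delta_k<\eta/2^k$, and in principle an earlier $\delta_j$ could have been chosen smaller than $\delta_k$. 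This is easily repaired (either declare, within the ``shrinking $\delta_k$ if necessary'' step, that $\delta_k<\delta_{k-1}$, which is harmless and clearly intended; or imitate the paper and first push both points outside $E_{1,k-1}$ by some $n_1<M_k$, so that Lemma~\ref{lm} is applied with both endpoints in $E_j^c$), but as written this is the one step of your proof that does not follow from the construction verbatim.
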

\begin{proof}
	To show that $(\mathbb{T}^2,T)$ is not equicontinuous,
	it is sufficient to show that for any $\epsilon>0$,
	there exist $(x_1,y_1), (x_2,y_2)\in\mathbb{T}^2$
	and $n\in\mathbb{N}$ such that $d((x_1,y_1),(x_2,y_2))\le\epsilon$ and $d(T^n(x_1,y_1),T^n(x_2,y_2))\ge\frac{1}{9}$.
	
	Fix  $\epsilon>0$.
	There exists $k\in\mathbb{N}$ such that $l_k+\delta_k<\epsilon$.
	Put $x'=\delta_k+\frac{1}{2}l_k$.
	One has $R_\alpha^{i}x'\in E_k^c$ and $h_k(R_\alpha^{i}x')=0$ for $i=0,1,\cdots,N_k-1$.
	By \eqref{eq:Ejk}, we can choose integers $n_1\in [0,M_k-1]$ and
	$n_2\in [\frac{1}{2}N_k-M_k,\frac{1}{2}N_k-1]$
	such that
	\[R_\alpha^{n_1}0,R_\alpha^{n_1}x', R_\alpha^{n_2}0,R_\alpha^{n_2}x'\in E_{1,k-1}^c. \]
	By using Lemma \ref{lm} and the fact $R_\alpha^{n_1}x',R_\alpha^{n_2}x'\in E_k^c$, we have
	\begin{align*}
	H_{n_2-n_1}^{h}(R_\alpha^{n_1}0)
	&=H_{n_2-n_1}^{h_{1,k-1}}(R_\alpha^{n_1}0)+H_{n_2-n_1}^{h_k}(R_\alpha^{n_1}0)+H_{n_2-n_1}^{h_{k+1,\infty}}(R_\alpha^{n_1}0)\\
	&= H_{n_2-n_1}^{h_k}(R_\alpha^{n_1}0)+H_{n_2-n_1}^{h_{k+1,\infty}}(R_\alpha^{n_1}0)\\
	&=(n_2-n_1)\frac{1}{N_k}+H_{n_2-n_1}^{h_{k+1,\infty}}(R_\alpha^{n_1}0)
	\end{align*}
	and
	\begin{align*}
	H_{n_2-n_1}^{h}(R_\alpha^{n_1}x')
	&=H_{n_2-n_1}^{h_{1,k-1}}(R_\alpha^{n_1}x')+
	H_{n_2-n_1}^{h_k}(R_\alpha^{n_1}x')
	+H_{n_2-n_1}^{h_{k+1,\infty}}(R_\alpha^{n_1}x')\\
	&=H_{n_2-n_1}^{h_{k+1,\infty}}(R_\alpha^{n_1}x').
	\end{align*}
	Note that $\frac{1}{2}N_k-2M_k\le n_2-n_1\le \frac{1}{2}N_k$ and
	$N_k=10^kM_k$, so
	\[ \frac{2}{5}\leq (n_2-n_1)\frac{1}{N_k}\leq \frac{1}{2}.\]
	By \eqref{eq:hk-bounded}, we have
	\begin{align*}
	\Vert H_{n_2-n_1}^{h_{k+1,\infty}}(R_\alpha^{n_1}0)-H_{n_2-n_1}^{h_{k+1,\infty}}(R_\alpha^{n_1}x')\Vert&\le \sum_{i=k+1}^\infty\Bigl(\Vert H_{n_2-n_1}^{h_i}(R_\alpha^{n_1}0)\Vert +\Vert H_{n_2-n_1}^{h_i}(R_\alpha^{n_1}x')\Vert \Bigr)\\
	&\le \sum_{i=k+1}^\infty 2(n_2-n_1)\frac{1}{N_i}\le \sum_{i=k+1}^\infty \frac{2N_k}{N_i}\\
	&\le2\sum_{i=k+1}^\infty\frac{1}{10^{i-k}}
	=\frac{2}{9}.
	\end{align*}
	Thus,
	$$\frac{16}{90}\le \Vert H_{n_2-n_1}^{h}(R_\alpha^{n_1}0)-H_{n_2-n_1}^{h}(R_\alpha^{n_1}x')\Vert
	\le \frac{65}{90},$$
	and
	\begin{align*}
	&d(T^{n_2-n_1}(R_\alpha^{n_1}0,0),T^{n_2-n_1}(R_\alpha^{n_1}x',0))\\	&\qquad\qquad =d((R_\alpha^{n_2}0,H^h_{n_2-n_1}(R_\alpha^{n_1}0)),
	(R_\alpha^{n_2}x',H^h_{n_2-n_1}(R_\alpha^{n_1}x'))\\
	&\qquad\qquad \ge
	\Vert H_{n_2-n_1}^{h}(R_\alpha^{n_1}0)-H_{n_2-n_1}^{h}(R_\alpha^{n_1}x') \Vert \ge \frac{16}{90}\ge \frac{1}{9}
	\end{align*}
	with $$d((R_\alpha^{n_1}0,0),(R_\alpha^{n_1}x',0))=
	\Vert R_\alpha^{n_1}0-R_\alpha^{n_1}x' \Vert = \Vert x'\Vert  =\delta_k+\frac{1}{2}\ell_k\le\epsilon.$$
	This implies that $(\mathbb{T}^2,T)$ is not equicontinuous.
\end{proof}

\begin{prop}\label{prop-11-14-1}
	$(\mathbb{T}^2,T)$ is not uniquely ergodic.
\end{prop}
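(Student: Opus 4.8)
\section*{Proof proposal (plan)}

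\noindent The plan is to contradict unique ergodicity by evaluating Birkhoff averages along the orbit of $(0,0)$ for a single Fourier mode in the fibre. Note first that the Lebesgue measure $m\times m$ on $\mathbb{T}^2$ is $T$--invariant, since each fibre map $y\mapsto y+h(x)$ is a rotation of $\mathbb{T}$. Hence, if $(\mathbb{T}^2,T)$ were uniquely ergodic, then $m\times m$ would be its only invariant measure, so (by the standard characterisation of unique ergodicity) the Birkhoff averages of every continuous function over every point would converge to its space average. Applying this to $f(x,y)=e^{2\pi i y}$ and to the point $(0,0)$, and using $T^i(0,0)=(i\alpha,\,H^{h}_i(0))$ together with $\int_{\mathbb{T}^2}f\,\mathrm{d}(m\times m)=0$, one would obtain $\tfrac1n\sum_{i=0}^{n-1}e^{2\pi i H^{h}_i(0)}\to 0$. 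So it suffices to produce $n_k\to\infty$ along which $\bigl|\tfrac1{n_k}\sum_{i=0}^{n_k-1}e^{2\pi i H^{h}_i(0)}\bigr|$ stays bounded away from $0$.

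\noindent The natural choice is $n_k=\lfloor N_k/2\rfloor$, so that $n_k\to\infty$ and $n_k/N_k\to\tfrac12$. For $0\le i<n_k$ write $H^{h}_i(0)=H^{h_{1,k-1}}_i(0)+H^{h_k}_i(0)+H^{h_{k+1,\infty}}_i(0)$. Since $h_k(l\alpha)=1/N_k$ for $0\le l\le N_k-1$ and $i<n_k\le N_k$, we have $H^{h_k}_i(0)=i/N_k$; by the same reasoning $H^{h_j}_i(0)=i/N_j$ for each $j>k$ (note $n_k<N_j$), so
\[
\bigl|H^{h_{k+1,\infty}}_i(0)\bigr|\le\sum_{j>k}\frac{i}{N_j}\le n_k\sum_{j>k}\frac1{N_j}=O\!\left(\frac{N_k}{N_{k+1}}\right),
\]
which tends to $0$ uniformly in $i<n_k$ because $N_{k+1}=10^{k+1}M_{k+1}$ with $M_{k+1}>N_k$. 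For the remaining summand, Lemma~\ref{lm} applied with $x=0$ shows $H^{h_j}_i(0)=0$ whenever $i\alpha\in E_j^{c}\cup[-\delta_j,\delta_j]$; and since $n_k\ge M_k$ and the $M_l$ are increasing, \eqref{20-2} (used with $j{+}1$ in place of $k$) bounds the density of $\{\,0\le i<n_k:\ i\alpha\in E_j\,\}$ by $\eta/2^{j}$ for every $1\le j\le k-1$. Summing over $j$, all but a fraction $<\eta=\tfrac1{100}$ of the indices $i\in[0,n_k)$ satisfy $H^{h_{1,k-1}}_i(0)=0$.

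\noindent For each such ``good'' index one gets $e^{2\pi i H^{h}_i(0)}=e^{2\pi i\, i/N_k}\bigl(1+o(1)\bigr)$ uniformly, while the at most $\eta n_k$ ``bad'' indices change the average by at most $2\eta$ in modulus; hence
\[
\Bigl|\tfrac1{n_k}\sum_{i=0}^{n_k-1}e^{2\pi i H^{h}_i(0)}\Bigr|\ \ge\ \Bigl|\tfrac1{n_k}\sum_{i=0}^{n_k-1}e^{2\pi i\, i/N_k}\Bigr|-2\eta-o(1).
\]
The last sum is geometric: $\tfrac1{n_k}\bigl|\sum_{i=0}^{n_k-1}e^{2\pi i\, i/N_k}\bigr|=\dfrac{|\sin(\pi n_k/N_k)|}{n_k\,|\sin(\pi/N_k)|}\to\dfrac{2}{\pi}$, and $\tfrac{2}{\pi}-2\eta>0$. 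This contradicts $\tfrac1n\sum_{i=0}^{n-1}e^{2\pi i H^{h}_i(0)}\to 0$, so $(\mathbb{T}^2,T)$ is not uniquely ergodic.

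\noindent The main obstacle is precisely the middle step: one must control the ``low--order'' Birkhoff sums $H^{h_{1,k-1}}_i(0)$ already at the finite scale $i\asymp N_k$ (rather than only asymptotically), showing they vanish off a set of density $<\eta$. This is exactly what the construction is set up to guarantee: the vanishing is Lemma~\ref{lm}, that is, the balanced $+/-$ pattern \eqref{eq-first1} and the block cancellation \eqref{eq:hk-2nk-1=0}, while the density bound is forced by $4N_j\delta_j<\eta/2^j$ together with the choice of $M_k$ in \eqref{20-2}, which makes the relevant equidistribution effective before time $N_k/2$. Everything else --- the unique ergodicity criterion, the identity $H^{h_k}_i(0)=i/N_k$, the tail estimate, and the geometric sum --- is routine.
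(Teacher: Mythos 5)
Your proof is correct and follows essentially the same route as the paper's: both observe that Haar measure is $T$-invariant, then contradict unique ergodicity by showing the Birkhoff averages at $(0,0)$ over the time window $[0,N_k/2)$ stay away from the space average, using the same decomposition $h=h_{1,k-1}+h_k+h_{k+1,\infty}$, the identity $H^{h_k}_i(0)=i/N_k$, Lemma~\ref{lm} together with \eqref{20-2} to make $H^{h_{1,k-1}}_i(0)$ vanish off a set of density $<\eta$, and the smallness of the tail. The only (cosmetic, and arguably cleaner) difference is your choice of test function: you use the continuous character $e^{2\pi i y}$ and an explicit geometric sum with limit $2/\pi$, whereas the paper uses the step function $\mathbf{1}_{\mathbb{T}\times[0,\frac12)}-\mathbf{1}_{\mathbb{T}\times[\frac12,1)}$ and counts indices with $H^h_i(0)\in[0,\tfrac12)$, which forces it to remark that the discontinuity set has Haar measure zero.
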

\begin{proof}
	Let $m_{\mathbb{T}^2}$ be the unique normalized Haar measure on $\mathbb{T}^2$.
	For any $m_{\mathbb{T}^2}$-integrable function $f(x,y)$, by the Fubini's theorem, one has
	\begin{align*}
	\int_{\mathbb{T}^2}f\circ T(x,y)\dd m_{\mathbb{T}^2}
	&=\int_{\mathbb{T}}\int_{\mathbb{T}}f(R_\alpha x,y+h(x))\dd m_\mathbb{T}(y)\dd m_\mathbb{T}(x)\\
	&=\int_{\mathbb{T}}\int_{\mathbb{T}}f(R_\alpha x,y)\dd m_\mathbb{T}(y)\dd m_\mathbb{T}(x)\\
	&=\int_{\mathbb{T}}\int_{\mathbb{T}}f(x,y)\dd m_\mathbb{T}(y)\dd m_\mathbb{T}(x)\\
	&=\int_{\mathbb{T}^2}f( x,y)\dd m_{\mathbb{T}^2}.
	\end{align*}
	Therefore $m_{\mathbb{T}^2}$ is $T$-invariant.
	
	If $(\mathbb{T}^2,T)$ is uniquely ergodic,
	then $m_{\mathbb{T}^2}$ is the unique invariant measure.
	We take a measurable function
	\begin{align*}
	f(x,y) =\mathbf{1}_{\mathbb{T}\times[0,\frac{1}{2})}(x,y)
	-\mathbf{1}_{\mathbb{T}\times[\frac{1}{2},1)}(x,y).
	\end{align*}
	Note that the boundary of $\mathbb{T}\times[0,\frac{1}{2}) $
	and $\mathbb{T}\times[\frac{1}{2},1)$ have zero $m_{\mathbb{T}^2}$-measure.
	By unique ergodicity, we have
	\begin{align}
	\label{11-9}
	\lim_{n\to\infty}\frac{1}{n}\sum_{i=0}^{n-1}f(T^i(x,y))=\int_{\mathbb{T}^2}f\dd m_{\mathbb{T}^2}=0
	\end{align}
	for each $(x,y)\in \mathbb{T}^2$.
	
	For $k\ge 1$, put $$A_k=
	\Bigl\{s\in\Bigl\{\frac{1}{10}N_k,\frac{1}{10}N_k+1,\cdots,\frac{4}{10}N_k\Bigr\} \colon R_\alpha^s0=s\alpha\in\bigcap_{j=1}^{k-1}E_j^c\Bigr\}.$$
	For $i\in A_k$, it is clear that
	$0,R^{i}_\alpha0\in \bigcap_{j=1}^{k-1}E_j^c\cup[-\delta_j,\delta_j]$,
	so we have that by  Lemma \ref{lm}
	\begin{align*}
	H^{h}_i(0)-\frac{i}{N_k}=\sum_{j=1}^\infty H^{h_j}_i(0)-\frac{i}{N_k}=\sum_{j=k}^\infty H^{h_j}_i(0)-\frac{i}{N_k}=\sum_{j=k+1}^\infty H^{h_j}_i(0)
	\end{align*}
	where in the last equality we use the fact that $H^{h_k}_i(0)=\sum_{l=0}^{i-1}h_k(R_\alpha^l0)=
	\frac{i}{N_k}$ by \eqref{eq-first1}.
	Notice that
	\[\Vert H^{h_j}_i(0)\Vert \le\sum_{l=0}^{i-1}\Vert h_j(R_\alpha^l0)\Vert \le\frac{i}{N_j}.\]
	Therefore
	\begin{align*}
	\Bigl\Vert H^{h}_i(0)-\frac{i}{N_k}\Bigr\Vert &=
	\Bigl\Vert \sum_{j=k+1}^\infty H^{h_j}_i(0)\Bigr\Vert \le \sum_{j=k+1}^\infty\Vert H^{h_j}_i(0)\Vert \le  \sum_{j=k+1}^\infty\frac{i}{N_j}\\
	&\le \sum_{j=k+1}^\infty\frac{i}{10^jM_j}\leq  \sum_{j=k+1}^\infty\frac{1}{10^j}<\frac{1}{10}.
	\end{align*}
	It is clear that $\frac{1}{10}\leq \frac{i}{N_k}\leq \frac{4}{10}$.
	So  $H^{h}_i(0)\in[0,\frac{1}{2})$ and
	\begin{align}
	\label{11-9-3}
	f(T^i(0,0))=f((i\alpha,H^{h}_i(0)))=1.
	\end{align}
	Put $S_k=\{0,1,\cdots,\frac{1}{2}N_k-1\}$ and
	$B_k=\{s\in S_k: R_\alpha^s0\in E_{1,k-1}\}$,
	and by the construction \eqref{20-2}
	\begin{align*}
	\frac{\#(B_k)}{\frac{1}{2}N_k}\le
	\sum_{j=1}^{k-1}\frac{\eta}{2^{j}}<\eta=\frac{1}{100}.
	\end{align*}
	Hence
	$$\frac{\#(A_k)}{\frac{1}{2}N_k}\ge
	\frac{\frac{3}{10}N_k-\#B_k}{\frac{1}{2}N_k}>\frac{59}{100}.$$
	Since $f(T^i(0,0))=1$ for $i\in A_k$ and $f(T^i(0,0))\in\{-1,1\}$ for $i\in S_k\setminus A_k$,
	we have
	\begin{align*} \frac{1}{\frac{1}{2}N_k}\sum_{i=0}^{\frac{1}{2}N_k-1}f(T^i(0,0))
	&=\frac{1}{\frac{1}{2}N_k}\Bigl(\sum_{i\in A_k}f(T^i(0,0))+\sum_{i\in S_k\setminus A_k}f(T^i(0,0))\Bigr)\\
	&\ge \frac{1}{\frac{1}{2}N_k}(\#(A_k)-\#(S_k\setminus A_k))\\
	&\ge\frac{59}{100}-\frac{41}{100}=\frac{18}{100}.	
	\end{align*}
	Thus
	$$\limsup_{k\to\infty}\frac{1}{\frac{1}{2}N_k}\sum_{i=0}^{\frac{1}{2}N_k-1}f(T^i(0,0))\ge \frac{18}{100}>0,$$
	which contradicts \eqref{11-9}.
	Therefore $(\mathbb{T}^2,T)$ is not uniquely ergodic.
	This completes the proof.
\end{proof}

For any real function $g$ on $\T$, $n\in\mathbb{N}$ and $x,y\in X$, we set
$$\bar{d}_n^g(x,y):=\frac{1}{n}\sum_{m=0}^{n-1}
\Vert H_m^{g}(x)-H_m^{g}(y)\Vert.$$
Then for any $(x_1,y_1), (x_2,y_2)\in \T^2$, we have
\begin{align*}
\bar{d}_n((x_1,y_1), (x_2,y_2))
&=\frac{1}{n}\sum_{m=0}^{n-1}d((R_\alpha^m x_1, y_1+H_m^h(x_1)),
(R_\alpha^m x_2, y_1+H_m^h(x_2)))\\
&\le \Vert x_1-x_2\Vert
+\Vert y_1-y_2\Vert +\bar d_{n}^h(x_1,x_2).
\end{align*}

The main result of this subsection is as follows.
\begin{prop}\label{pro-1}
	$(\mathbb{T}^2,T)$ has bounded topological complexity
	with respect to $\{\bar{d}_n\}$.
\end{prop}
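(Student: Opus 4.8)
The plan is to reduce everything to the one‑dimensional cocycle term. By the inequality displayed just before the statement,
\[\bar d_n\bigl((x_1,y_1),(x_2,y_2)\bigr)\le \Vert x_1-x_2\Vert+\Vert y_1-y_2\Vert+\bar d_n^h(x_1,x_2),\qquad \bar d_n^h(x,x')=\tfrac1n\sum_{m=0}^{n-1}\bigl\Vert H_m^h(x)-H_m^h(x')\bigr\Vert,\]
so it suffices to show: for every $\ep>0$ there is $p=p(\ep)$ and, for each $n$, a family of at most $p$ arcs covering $\T$ with $\bar d_n^h(x,x')<\ep$ whenever $x,x'$ lie in a common arc. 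Pairing such arcs with $\lceil 1/\ep\rceil$ equally spaced points of the $y$‑circle and using that $R_\alpha$ is an isometry then covers $\T^2$ by $O_\ep(1)$ balls of $\bar d_n$‑radius $\ep$ (after replacing $\ep$ by $\ep/3$ throughout), uniformly in $n$.

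The first ingredient is that each $h_k$ is a continuous coboundary over $(\T,R_\alpha)$: writing $Ug=g\circ R_\alpha$, a direct telescoping using \eqref{eq-first1} gives $h_k=(I-U)\psi_k$ with $\psi_k=-\sum_{r=1}^{2N_k-1}\min(r,2N_k-r)\,U^{-r}h_k^*$. By \eqref{fc-1} the arcs $\supp(U^{-r}h_k^*)=[r\alpha-\gamma_k,r\alpha+\gamma_k]$ $(1\le r\le 2N_k-1)$ are pairwise disjoint, hence $\supp\psi_k\subset E_k$, $\Vert\psi_k\Vert_\infty\le1$, and $\psi_k$ is Lipschitz with constant $\le 1/\gamma_k$ by \eqref{eq:Lip-continuous}. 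Consequently $H_m^{h_k}(x)=\psi_k(x)-\psi_k(R_\alpha^m x)$; in particular $|H_m^{h_k}(x)|\le1$ for all $m$ and $H_m^{h_k}(x)=0$ unless $x$ or $R_\alpha^m x$ lies in $E_k$. Lemmas~\ref{lm} and \ref{lm-2} are the quantitative forms of these facts, and the argument can equivalently be phrased purely through those lemmas without naming $\psi_k$.

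Fix $\ep>0$, choose $k=k(\ep)$ large, and split $h=h_{1,k}+h_{k+1,\infty}$. For $n<M_{k+1}$: by \eqref{k-infty}, $\Vert h_{k+1,\infty}\Vert_\infty<\tfrac1{9\cdot10^kM_{k+1}}$, so $\bar d_n^{h_{k+1,\infty}}(x,x')\le 2n\Vert h_{k+1,\infty}\Vert_\infty<\tfrac2{9\cdot10^k}<\ep/2$; and since $h_{1,k}=\Psi_k-\Psi_k\circ R_\alpha$ with $\Psi_k:=\sum_{j\le k}\psi_j$ Lipschitz, say with constant $L_k$, one gets $\bar d_n^{h_{1,k}}(x,x')\le 2L_k\Vert x-x'\Vert$ for \emph{all} $n$; so arcs of length $<\ep/(4L_k)$ handle $h_{1,k}$ once and for all, and $n<M_{k+1}$ is done. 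For $n\ge M_{k+1}$ put $k^*=k^*(n)=\max\{j:M_j\le n\}\ge k+1$ and split the tail again. For $i>k^*$ one has $N_i=10^iM_i>10^i n$, so $\bar d_n^{h_i}(x,x')\le\tfrac2n\sum_{m<n}m/N_i<10^{-i}$, summing to $<\ep/4$. For $k+1\le i\le k^*-1$ the estimate \eqref{20-2} (equivalently \eqref{eq:Ejk}) bounds the time the orbit of any point spends in $E_i$ by $n\eta/2^i$; with $\supp\psi_i\subset E_i$ and $H_m^{h_i}=\psi_i-\psi_i\circ R_\alpha$ this bounds $\bar d_n^{h_i}(x,x')$ by $\eta/2^i$ plus the oscillation of $\psi_i$ on the common arc. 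The single remaining index $i=k^*$ is treated using that $n<M_{k^*+1}$ forces, via \eqref{fc-1} (return times to $[-\gamma_{k^*},\gamma_{k^*}]$ exceed $2N_{k^*+1}$), the orbit to meet $\supp\psi_{k^*}$ in at most one ``pass'' over a window of length $n$, together with the crude bound $|H_m^{h_{k^*}}|\le\min(1,m/N_{k^*})$.

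The main obstacle — and the reason the construction, hence the full proof, is long — is to carry this out so that the \emph{number} of arcs is bounded by a function of $\ep$ alone: for $n$ with $k^*(n)=K$ one must, besides the fixed $\ep/(4L_k)$‑net, refine the net near $\supp\psi_i$ for $k<i\le K$, and the relevant scale $\gamma_i$ shrinks as $n$ grows, so a naive count is not uniform. Making this count collapse to a bound depending only on $\ep$ is exactly what the inductive choices $N_k=10^kM_k$, the nesting/disjointness \eqref{fc-1} and \eqref{lk-def}, and the ergodic‑averaging thresholds \eqref{20-1}--\eqref{20-2} selected at each stage are designed for. I would therefore organize the write‑up along the head/tail split and the dichotomy $n<M_{k+1}$ versus $n\ge M_{k+1}$, do the per‑index estimates via Lemmas~\ref{lm}--\ref{lm-2} and \eqref{20-2}, and devote the bulk of the effort to the uniform counting of cover pieces.
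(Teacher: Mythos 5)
Your coboundary observation is correct and genuinely clarifying: one checks that $h_k=(I-U)\psi_k$ with $\psi_k=-\sum_{r=1}^{2N_k-1}\min(r,2N_k-r)\,U^{-r}h_k^*$, so $H_m^{h_k}(x)=\psi_k(x)-\psi_k(R_\alpha^mx)$, which recovers Lemmas~\ref{lm} and \ref{lm-2} and neatly disposes of the head (for the fixed $k=k(\ep)$, the bound $\bar d_n^{h_{1,k}}(x,x')\le 2L_k\Vert x-x'\Vert$ is valid because $R_\alpha$ is an isometry) and of the far tail $N_i>10^i n$. But there is a genuine gap exactly where you place it, and your framing makes it unfixable as stated: reducing to a cover of $\T$ by boundedly many \emph{arcs} of small $\bar d_n^h$-diameter is a reduction to a false statement. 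Indeed, for $2N_k<n\le 2N_{k+1}$ and any $r$ with $\min(r,2N_k-r)\ge N_k/2$, the pair $x=r\alpha$, $x'=r\alpha+\gamma_k/2$ has $\Vert\psi_k(x)-\psi_k(x')\Vert\ge 1/4$, while $R_\alpha^mx,R_\alpha^mx'$ lie outside $E_k$ for all but an $\eta/2^{k}$-fraction of $m<n$ and the remaining $h_i$ contribute $o(1)$; hence $\bar d_n^h(x,x')\ge 1/4-o(1)$, so an arc of small $\bar d_n^h$-diameter containing $r\alpha$ has length $O(\gamma_k)$, and since these $\sim N_k$ points are $2\gamma_k$-separated by \eqref{fc-1}, at least $\sim N_k$ arcs are required. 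Any correct cover must use non-interval pieces.

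The two devices the paper uses, and which your plan lacks, are precisely such non-interval covers. First, all the intermediate indices between the fixed threshold and the critical scale are handled \emph{simultaneously} by partitioning $\T$ into the $c_\ep$ level sets of the single function $x\mapsto\sum_i H^{h_i}_{n_k^*(x)}(x)\bmod 1$ (in your language, the level sets of $\sum_i\psi_i\bmod 1$, since $H^{h_i}_{n_k^*(x)}(x)=\psi_i(x)$); this costs $c_\ep$ pieces no matter how many indices are involved, whereas controlling ``the oscillation of $\psi_i$ on the common arc'' index by index cannot be made uniform because the number of indices grows with $n$. Second, for the critical indices --- the paper takes exactly $i=k,k+1$ with $2N_k<n\le 2N_{k+1}$, a sharper choice than your $k^*=\max\{j\colon M_j\le n\}$, under which $n/N_{k^*}$ can be of order $1$ and neither your bound $\min(1,m/N_{k^*})$ nor the one-pass count $2N_{k^*}/n$ is small --- it uses the sets $Q_{r,s}$ that lump together $\sim N_k/c_\ep$ orbit-translates of a length-$\gamma_k/c_\ep^2$ subinterval of $[-\gamma_k,\gamma_k]$, exploiting $|c_{m_1}-c_{m_2}|\le N_k/c_\ep$ to collapse the $\sim N_k$ components of $\supp\psi_k$ into $O(c_\ep^4)$ cover elements. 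Without these two constructions the ``uniform counting of cover pieces'', which you correctly identify as the crux and then defer, does not go through.
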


\begin{proof}
	It is sufficient to show that for any $\epsilon\in(0, \frac{1}{100})$,  there exist two constants $C(\epsilon)>0$ and
$K(\epsilon)\in\mathbb{N}$ such that $\overline{\spann}(n,17\epsilon)\le C(\epsilon)$ for any $n> K(\epsilon)$.
	
	\medskip
	First, we choose an integer $q\in\mathbb{N}$ such that
	\begin{eqnarray}\label{eq-34}
	\sum_{i=q+1}^\infty \frac{\eta}{2^i}<\epsilon \text{ and } \frac{1}{10^q}<\epsilon.
	\end{eqnarray}
	Then there exists $\delta(\epsilon)>0$ such that
	\begin{eqnarray}\label{eq-40}
	\sum_{i=1}^{q}\Vert H_{s}^{h_{i}}(x)-H_{s}^{h_{i}}(y)\Vert <\epsilon,
	\end{eqnarray}
	for any $0\le s\le M_{q+1}-1$ and any $x,y\in \mathbb{T}$ with $\Vert x-y\Vert <\delta(\epsilon)$.
	
	Put $c_\epsilon=\lceil\frac{1}{\epsilon}\rceil$ and $c_\delta=\lceil \frac{1}{\delta(\epsilon)}\rceil$.
	Let $$C(\epsilon)=100 c_\epsilon^{11}c_\delta
	\text{ and }K(\epsilon)=2N_{q+2}.$$
	In the following, we are going to show that
	for any $n> K(\epsilon)$  there exists  a cover $\mathcal{T}$ of $\mathbb{T}^2$ (that depends on $n$),
	such that
	$$\#(\mathcal{T})\le C(\epsilon) \text{ and } \bar{d}_{n}((x_1,y_1),(x_2,y_2))\le 17\epsilon$$
	for any $(x_1,y_1),(x_2,y_2)\in W\in\mathcal{T}$. This will imply $\overline{\spann}(n,17\epsilon)\le C(\epsilon)$ for any $n> K(\epsilon)$.
	
	\medskip
	Now fix an integer $n> K(\epsilon)$.
	There exists a unique integer $k\ge q+2$
	such that
	\[2N_{k}<n\le2N_{k+1}.\]
Recall that $$h=h_{1,k-1}+h_{k}+h_{k+1}+h_{k+2,\infty},$$
$$\bar{d}_n((x_1,y_1), (x_2,y_2))\le \Vert x_1-x_2\Vert +\Vert y_1-y_2\Vert +\bar{d}_{n}^h(x_1,x_2),$$
and
$$\bar{d}_{n}^h(x_1,x_2)\le \bar{d}_{n}^{h_{1,k-1}}(x_1,x_2)+\bar{d}_{n}^{h_k}(x_1,x_2)+\bar{d}_{n}^{h_{k+1}}(x_1,x_2)+\bar{d}_{n}^{h_{k+2,\infty}}(x_1,x_2).$$
	
We divide the remaining proof into four steps, bounding each term of the sum above.
	
\medskip
\noindent \textbf{Step 1}: \textit{We will construct a finite cover $\mathcal{P}$ of $\mathbb{T}$ such that
$$\#(\mathcal{P})\le c_\delta c_\epsilon^2 \text{ and } \bar{d}_{n}^{h_{1,k-1}}(x,y)<6\epsilon$$
for  $x,y\in P\in \mathcal{P}$. }
	
\medskip
Firstly, for any $x\in\mathbb{T}$ and $\ell\ge 2$, we define
\[
n_\ell^{*}(x)=\min\{i\geq 0: R_\alpha^ix\in E_{1,\ell-1}^c\}	\ \text{and}\  x_\ell^{*}=R_\alpha^{n_\ell^{*}(x)}x.\]
Clearly, $n_\ell^{*}(x)\le M_\ell-1$ by \eqref{20-1}.
By Lemma \ref{lm}, if $R_\alpha^mx\in E_i^c$ for some
$1\le i\le \ell-1$ and $m\ge M_\ell$, one has $H_{m-n_\ell^*(x)}^{h_i}(x_\ell^*)=0$ and then
	\begin{eqnarray}\label{eq-bh1}
	H_{m}^{h_i}(x)=H_{n_\ell^*(x)}^{h_i}(x)+H_{m-n_\ell^*(x)}^{h_i}(x_\ell^*)=H_{n_\ell^*(x)}^{h_i}(x).
	\end{eqnarray}
	
	Next, let
	\begin{align*}
	\mathcal{P}_1&=\Bigl\{\Bigl[\frac{j}{c_\delta},\frac{j+1}{c_\delta}\Bigr): 0\le j\le c_\delta-1\Bigr\},\\
	\mathcal{P}_2&=\Bigl\{
	\Bigl\{x\in \T\colon
	\sum_{i=1}^{q}H_{n_{q+1}^*(x)}^{h_i}(x)
	\in \Bigl[\frac{j}{c_\epsilon},\frac{j+1}{c_\epsilon}\Bigr)\Bigr\}
	\colon 0\le j\le c_\epsilon-1 \Bigr\},\\
	\mathcal{P}_3&=\Bigl\{ \Bigl\{x\in \T:  \sum_{i=q+1}^{k-1}H_{n_k^*(x)}^{h_i}(x)
	\in \Bigl[\frac{j}{c_\epsilon},\frac{j+1}{c_\epsilon}\Bigr) \Bigr\}:0\le j\le c_\epsilon-1\Bigr\}.
	\end{align*}
	Put $\mathcal{P}=\mathcal{P}_1\vee\mathcal{P}_2\vee\mathcal{P}_3$.
	It is clear that $\mathcal{P}$ is a partition of $\mathbb{T}$ and $\#(\mathcal{P})\le c_\delta c_\epsilon^2$.
	
	Fix two points $x,y$ which are in the same atom of $\mathcal{P}$.
	If there exists $m\ge M_{k}$ with $R_\alpha^mx, R_\alpha^my\in E_{q+1,k-1}^c$, then by \eqref{eq-bh1}
	we have for $q+1\leq i\leq k-1$,
	\[
	H_{m}^{h_i}(x)=H_{n_k^*(x)}^{h_i}(x)
	\text{ and } H_{m}^{h_i}(y)=H_{n_k^*(y)}^{h_i}(y).
	\]
	Thus,
	\begin{align*}
	\Bigl\Vert \sum_{i=q+1}^{k-1}(H_{m}^{h_i}(x)-H_{m}^{h_i}(y))\Bigr\Vert
	=\Bigl\Vert \sum_{i=q+1}^{k-1}(H_{n_k^*(x)}^{h_i}(x)-H_{n_k^*(y)}^{h_i}(y))\Bigr\Vert
	\leq \frac{1}{c_\epsilon}\leq \epsilon,
	\end{align*}
	as $x,y$ are in the same atom in $\mathcal{P}_3$.
	
	By \eqref{20-2} for any $z\in\mathbb{T}$,
	\begin{equation*} 
	\frac{1}{M_{q+1}}\#\{0\le i\le M_{q+1}-1:R_\alpha^iz\in E_{1,q}^c\}\ge1-\sum_{i=1}^\infty\frac{\eta}{2^{i}}>\frac{1}{2}.
	\end{equation*}
	If there exists
	$m\ge M_{k}$ with $R_\alpha^mx, R_\alpha^my\in E_{q+1,k-1}^c$,
	then we can find an integer $M\in [m- M_{q+1}, m-1]$ such that
	$R_\alpha^Mx\in E_{1,q}^c$ and $R_\alpha^My\in E_{1,q}^c$.
	Note that $M\geq m- M_{q+1}>M_{q+1}$.
	By \eqref{eq-bh1}, for $1\leq i \leq q$,
	\[
	H_{M}^{h_i}(x)=H_{n_{q+1}^*(x)}^{h_i}(x)
	\text{ and } H_{M}^{h_i}(y)=H_{n_{q+1}^*(y)}^{h_i}(y).
	\]
	Then
	\begin{align*}
	\Bigl\Vert \sum_{i=1}^{q}(H_{M}^{h_i}(x)-H_{M}^{h_i}(y))\Bigr\Vert
	=\Bigl\Vert \sum_{i=1}^{q}(H_{n_{q+1}^*(x)}^{h_i}(x)-H_{n_{q+1}^*(y)}^{h_i}(y))\Bigr\Vert
	\leq \frac{1}{c_\epsilon}\leq \epsilon,
	\end{align*}
	as $x,y$ are in the same atom in $\mathcal{P}_2$.
	
	As $x,y$ are in the same atom in $\mathcal{P}_1$,
	$\Vert R_\alpha^Mx-R_\alpha^My\Vert=
	\Vert x-y\Vert \le \frac{1}{c_\delta}\le \delta(\epsilon)$.
	Note that $m-M\leq M_{q+1}-1$.
	By \eqref{eq-40} we have
	\[
	\Bigl\Vert \sum_{i=1}^{q}H_{m-M}^{h_{i}}(R_\alpha^Mx)-\sum_{i=1}^{q}H_{m-M}^{h_{i}}(R_\alpha^My)\Bigr\Vert < \epsilon.
	\]
	Hence, if there exists  $m\ge M_{k}$ with $R_\alpha^mx, R_\alpha^my\in E_{q+1,k-1}^c$, then we have
	\begin{align*}
	\bigl\Vert H_{m}^{h_{1,k-1}}(x)-H_{m}^{h_{1,k-1}}(y)\bigr\Vert
	&\le\Bigl\Vert \sum_{i=1}^{q}(H_{m}^{h_{i}}(x)-H_{m}^{h_{i}}(y))\Bigr\Vert +
	\Bigl\Vert \sum_{i=q+1}^{k-1}(H_{m}^{h_i}(x)-H_{m}^{h_i}(y))\Bigr\Vert \\
	&\leq\Bigl\Vert \sum_{i=1}^{q}(H_{M}^{h_{i}}(x)-H_{M}^{h_{i}}(y))\Bigr\Vert \\
	& \qquad \qquad +\Bigl\Vert \sum_{i=1}^{q}H_{m-M}^{h_{i}}(R_\alpha^Mx)-\sum_{i=1}^{q}H_{m-M}^{h_{i}}(R_\alpha^My) \Bigl\Vert \\
	&\qquad \qquad +\Bigl\Vert \sum_{i=q+1}^{k-1}(H_{m}^{h_i}(x)-H_{m}^{h_i}(y))\Bigr \Vert \\
	&\le 3\epsilon.
	\end{align*}
Finally,
	\begin{align*}
	\bar{d}_{n}^{h_{1,k-1}}(x,y)
	&=\frac{1}{n}\sum_{j=0}^{n-1}\Vert H_j^{h_{1,k-1}}(x)-H_j^{h_{1,k-1}}(y)\Vert \\
	&\leq\frac{1}{n}\Biggl(\sum_{\scriptstyle M_k\le j\le n-1 \atop \scriptstyle R_\alpha^jx,R_\alpha^jy\in
E_{q+1,k-1}^c}\Vert H_j^{h_{1,k-1}}(x)-H_j^{h_{1,k-1}}(y)\Vert \\
	& \qquad\qquad +\sum_{\scriptstyle M_k\le j\le n-1 \atop \scriptstyle R_\alpha^jx\in E_{q+1,k-1}}1+
\sum_{\scriptstyle M_k\le j\le n-1 \atop \scriptstyle R_\alpha^jy\in E_{q+1,k-1}}1+\sum_{0\le j\le M_k-1}1\Biggr)\\
	&\leq 3\epsilon+\frac{1}{n}\#(\{0\le j\le n-1:R_\alpha^jx\in E_{q+1,k-1}\})\\
	&\qquad\quad +\frac{1}{n-1}\#(\{0\le j\le n-1:R_\alpha^jy\in E_{q+1,k-1}\})+\frac{M_k}{n-1} \\
	&< 6\epsilon,
	\end{align*}
	where the last inequality follows from \eqref{eq:Ejk} and \eqref{eq-34}.
	
	\medskip
	 \noindent \textbf{Step 2}:  \textit{We will construct a finite cover $\mathcal{Q}$ of $\mathbb{T}$ such that
$$\#(\mathcal{Q})\le10c_\epsilon^4\text{ and } \bar{d}_{n}^{h_k}(x,y)\le 4\epsilon$$
		for any $x,y\in Q\in\mathcal{Q}$.}
	
	\medskip
	There are two cases. The first case is $n\le 2c_\epsilon N_k$.
	In this case, we put
	$$Q_0=\mathbb{T}\setminus \biggl(\bigcup_{-2c_\epsilon N_{k}\le i<(2+2c_\epsilon)N_{k}}[i\alpha-\gamma_k,i\alpha+\gamma_k]
	\biggr),$$ and
	$$Q_{r,s}=\bigcup_{ \frac{r N_{k}}{c_\epsilon}\le i<\frac{ (r+1) N_{k}}{\epsilon}}\biggl[i\alpha+\frac{\gamma_{k} s}{c_\epsilon^2},i\alpha+\frac{\gamma_{k} (s+1)}{c_\epsilon^2}\biggr].$$
	Let
	$$\mathcal{Q}=\{Q_0\}\cup\bigl\{Q_{r,s}: -2c_\epsilon^2\le r\le (2+2c_\epsilon)c_\epsilon-1,-c_\epsilon^2\le s\le c_\epsilon^2-1\bigr\}.$$
	It is clear that $\mathcal{Q}$ is a cover of $\mathbb{T}$ and
	$\#(\mathcal{Q})\le 2c_\epsilon^2\cdot  5c_\epsilon^2=\frac{10c_\epsilon^3}{\epsilon}\le10c_\epsilon^4$.
	For $x,y\in Q_0$, one has $\bar{d}_{n}^{h_{k}}(x,y)=0$ by \eqref{eq-first1}.
	
	Now assume that $x,y\in Q_{r,s}$ for some $r$ and $s$.
	There exist  integers $m_1,m_2\in [\frac{ r N_{k}}{c_\epsilon},\frac{(r+1)N_{k}}{c_\epsilon}]$ and $x_1,y_1\in [\frac{\gamma_{k} s}{c_\epsilon^2},\frac{\gamma_{k} (s+1)}{c_\epsilon^2}]$ such that
	$$x=R_\alpha^{m_1}x_1\text{ and }y= R_\alpha^{m_2}y_1.$$
	Without loss of generality, we can assume that $m_1\leq m_2$.
	For any $1\le m\le n$, one has
	\begin{align*}
	\Vert H_m^{h_{k}}(x)-H_m^{h_{k}}(y)\Vert
	&=\Bigl\Vert \sum_{i=0}^{m-1}(h_{k}(R_\alpha^ix)-h_{k}(R_\alpha^iy))\Bigr\Vert \\
	&\le \Bigl\Vert \sum_{i=m_1}^{m_1+m-1}h_{k}(R_\alpha^ix_1)-\sum_{i=m_2}^{m_2+m-1}h_{k}(R_\alpha^iy_1))\Bigr\Vert \\
	&\le\Bigl\Vert \sum_{i=m_1}^{m_1+m-1}h_{k}(R_\alpha^ix_1)-\sum_{i=m_2}^{m_2+m-1}h_{k}(R_\alpha^ix_1)\Bigr\Vert \\
	&\qquad\qquad +\Bigl\Vert \sum_{i=m_2}^{m_2+m-1}(h_{k}(R_\alpha^ix_1)-h_{k}(R_\alpha^iy_1))\Bigr\Vert \\
	&\le\sum_{i=m_1}^{m_2-1}\Vert h_{k}(R_\alpha^ix_1)\Vert +\sum_{i=m_1+m}^{m_2+m-1}\Vert h_{k}(R_\alpha^ix_1)\Vert +m\cdot\frac{\gamma_k}{c_\epsilon^2}\cdot\frac{1}{N_k\gamma_k}\\
	&\le2(m_2-m_1)\frac{1}{N_k}+m\cdot\frac{\gamma_k}{c_\epsilon^2}\cdot\frac{1}{N_k\gamma_k} \qquad\text{by \eqref{eq:hk-bounded} and \eqref{eq:Lip-continuous}}
	\\
	&\le 4\epsilon.
	\end{align*}
	Hence, summing up we obtain
	$$\bar{d}_{n}^{h_{k}}(x,y)\le 4\epsilon, \text{ for }x,y\in Q\in\mathcal{Q}.$$
	
	\medskip
	The second case is $n> 2c_\epsilon N_k$.
	In this case, we put
	$$Q_0=\mathbb{T}\setminus \biggl(\bigcup_{0\le i<2N_{k}}[i\alpha-\gamma_k,i\alpha+\gamma_k]\biggr),$$
	and
	$$Q_{r,s}=\bigcup_{\frac{rN_{k}}{c_\epsilon}\le i<\frac{ (r+1)N_{k}}{c_\epsilon}}\biggl[i\alpha+\frac{\gamma_{k} s}{c_\epsilon^2},i\alpha+\frac{\gamma_{k} (s+1)}{c_\epsilon^2}\biggr].$$
	Let
	$$\mathcal{Q}=\{Q_0\}\bigcup
	\bigl\{Q_{r,s}: 0\le r\le 2c_\epsilon -1,-c_\epsilon^2\le s\le c_\epsilon^2-1\bigr\}.$$
	
	It is clear that $\mathcal{Q}$ is a cover of $\mathbb{T}$ and $\#\mathcal{Q}\le 10c_\epsilon^4$.
	Given $x,y\in Q_0$, by \eqref{fc-1} and \eqref{eq-first1} one has
	$$\#\{0\le m\le n-1: H_{m}^{h_k}(x)\neq 0\}\le 2N_k$$ and $$\#\{0\le m\le n-1: H_{m}^{h_k}(y)\neq 0\}\le 2N_k.$$
	Then by \eqref{eq:hk-bounded}
	$$\bar{d}_{n}^{h_{k}}(x,y)\le\frac{1}{n}\cdot 4N_k\le 2\epsilon.$$
	
	Now assume that $x,y\in Q_{r,s}$ for some $r$ and $s$.
There exist  integers $m_1,m_2\in [\frac{ r N_{k}}{c_\epsilon},\frac{(r+1)N_{k}}{c_\epsilon}]$
and  $x_1,y_1\in [\frac{\gamma_{k} s}{c_\epsilon^2},\frac{\gamma_{k} (s+1)}{c_\epsilon^2}]$
	such that
	$$x=R_\alpha^{m_1}x_1\text{ and }y= R_\alpha^{m_2}y_1.$$
	Without loss of generality, we can assume that $m_1\leq m_2$.
Recall that $2N_{k}<n\le2N_{k+1}$.	By \eqref{fc-1} and \eqref{eq-first1} one has
	\begin{align}\label{eq-first2}
	h_{k}(R_\alpha^ix_1)=h_{k}(R_\alpha^iy_1)=0
	\end{align}
	for any $2N_k<i\le2N_k+ n\le 2N_k+2N_{k+1}$.
	For any $1\le m\le n$, one has
	\begin{align*}
	\Vert H_m^{h_{k}}(x)-H_m^{h_{k}}(y)\Vert
	&= \Bigl\Vert \sum_{i=m_1}^{m_1+m-1}h_{k}(R_\alpha^ix_1)-\sum_{i=m_2}^{m_2+m-1}h_{k}(R_\alpha^iy_1))\Bigr\Vert \\
	&\le\Bigl\Vert \sum_{i=m_1}^{m_1+m-1}h_{k}(R_\alpha^ix_1)-\sum_{i=m_2}^{m_2+m-1}h_{k}(R_\alpha^ix_1)\Bigr\Vert \\
	&\qquad\qquad +\Bigl\Vert \sum_{i=m_2}^{m_2+m-1}(h_{k}(R_\alpha^ix_1)-h_{k}(R_\alpha^iy_1))\Bigr\Vert \\
	&\le\sum_{i=m_1}^{m_2-1}\Vert h_{k}(R_\alpha^ix_1)\Vert +\sum_{i=m_1+m}^{m_2+m-1}\Vert h_{k}(R_\alpha^ix_1)\Vert \\
	&\qquad\qquad + \Bigl\Vert \sum_{\scriptstyle m_2\le i\le m_2+m-1 }(h_{k}(R_\alpha^ix_1)-h_{k}(R_\alpha^iy_1))\Bigr\Vert \\
	&= 2(m_2-m_1)\frac{1}{N_k}+\biggl\Vert \sum_{\scriptstyle m_2\le i\le m_2+m-1}(h_{k}(R_\alpha^ix_1)-h_{k}(R_\alpha^iy_1))\biggr\Vert
	\quad\text{by \eqref{eq:hk-bounded}}
	\\
	&= 2(m_2-m_1)\frac{1}{N_k}+\biggl\Vert \sum_{\scriptstyle m_2\le i\le m_2+m-1  \atop \scriptstyle i\le 2N_k}(h_{k}(R_\alpha^ix_1)-h_{k}(R_\alpha^iy_1))\biggr\Vert
	\quad\text{by \eqref{eq-first2}}
	\\
	&\le2(m_2-m_1)\frac{1}{N_k}+
	2N_k\cdot\frac{\gamma_k}{c_\epsilon^2}\cdot\frac{1}{N_k\gamma_k}
	\quad\text{by \eqref{eq:Lip-continuous}}  \\
	&\le 4\epsilon.
	\end{align*}
	Hence, summing up we get $$\bar{d}_{n}^{h_{k}}(x,y)\le 4\epsilon, \text{ for }x,y\in Q\in \mathcal{Q}.$$

	\medskip
	 \noindent \textbf{Step 3}:
		\textit{We will construct a finite cover $\mathcal{I}$ of $\mathbb{T}$ such that $$\#(\mathcal{I})\le10c_\epsilon^3\text{ and } \bar{d}_{n}^{h_{k+1}}(x,y)\le 4\epsilon$$
		for any $x,y\in I\in\mathcal{I}$.}
	
	\medskip
	Put
	$$I_0=\mathbb{T}\setminus \biggl(\bigcup_{i=-2N_{k+1}}^{2N_{k+1}} [i\alpha-\gamma_{k+1},i\alpha+\gamma_{k+1}]\biggr),$$
	and
	$$I_{r,s}=\bigcup_{\frac{rN_{k+1}}{c_\epsilon}\le i<
		\frac{ (r+1)N_{k+1}}{c_\epsilon}} \Bigl[i\alpha+\frac{\gamma_{k+1} s}{c_\epsilon^2},i\alpha+\frac{\gamma_{k+1} (s+1)}{c_\epsilon^2}\Bigr].$$
	Put $$\I=\{I_0\}\bigcup\bigl\{I_{r,s}: -2c_\epsilon\le r\le  2c_\epsilon-1,-c_\epsilon^2\le s\le c_\epsilon^2-1
	\bigr\}.$$
	It is clear that $\mathcal{I}$ is a cover of $\mathbb{T}$ and $\#(\mathcal{I})\le 10c_\epsilon^3$. Given $x,y\in I_0$, one has $\bar{d}_{n}^{h_{k+1}}(x,y)=0$ by \eqref{eq-first1}.
	
	Now assume that $x,y\in I_{r,s}$ for  some $r$ and $s$.
	There exist integers $m_1,m_2\in [\frac{ r N_{k}}{c_\epsilon},\frac{(r+1)N_{k}}{c_\epsilon}]$ and $x_1,y_1\in[\frac{\gamma_{k+1} s}{c_\epsilon^2},\frac{\gamma_{k+1} (s+1)}{c_\epsilon^2}]$
	such that
	$$x=R_\alpha^{m_1}x_1\text{ and }y= R_\alpha^{m_2}y_1.$$
	Without loss of generality, we can assume that $m_1\leq m_2$.
	For any $1\leq m\le n$, one has
	\begin{align*}
	\Vert H_m^{h_{k+1}}(x)-H_m^{h_{k+1}}(y)\Vert &=
	\Bigl\Vert \sum_{i=0}^{m-1}(h_{k+1}(R_\alpha^ix)-h_{k+1}(R_\alpha^iy))\Bigr\Vert \\
	&\le\Bigl \Vert \sum_{i=m_1}^{m_1+m-1}h_{k+1}(R_\alpha^ix_1)-\sum_{i=m_2}^{m_2+m-1}h_{k+1}(R_\alpha^iy_1))\Bigr\Vert \\
	&\le \Bigl\Vert \sum_{i=m_1}^{m_1+m-1}h_{k+1}(R_\alpha^ix_1)-\sum_{i=m_2}^{m_2+m-1}h_{k+1}(R_\alpha^ix_1) \Bigr\Vert \\
	&\qquad \qquad +\Bigl\Vert \sum_{i=m_2}^{m_2+m-1}(h_{k}(R_\alpha^ix_1)-h_{k}(R_\alpha^iy_1))
	\Bigr\Vert \\
	&\le2(m_2-m_1)\frac{1}{N_{k+1}}+m\cdot\frac{\gamma_{k+1}}{c_\epsilon^2}\cdot\frac{1}{N_{k+1}\gamma_{k+1}} \qquad\text{by \eqref{eq:hk-bounded} and \eqref{eq:Lip-continuous}}
	\\
	&\le 4\epsilon.
	\end{align*}
	Hence, summing up we have
	$$\bar{d}_{n}^{h_{k+1}}(x,y)\le 4\epsilon, \text{ for }x,y\in Q\in \mathcal{I}.$$
	
	\medskip
  \noindent \textbf{Step 4}: \textit{We will construct a finite cover $\mathcal{T}$ of $\mathbb{T}^2$ such that $$\#(\mathcal{T})\le100c_\epsilon^{11}c_\delta\text{ and } \bar{d}_{n}((x_1,y_1),(x_2,y_2))\le 17\epsilon$$
		for any $(x_1,y_1),(x_2,y_2)\in W\in\mathcal{T}$.}
	
	\medskip
	Note that for any$x\in\mathbb{T}$,
	\[
	\Vert h_{k+2,\infty}(x)\Vert \le\sum_{i=k+2}^\infty\frac{1}{N_i}
	\le\frac{2}{N_{k+2}}.
	\]
	For any $x,y\in\mathbb{T}$ and $1\le m\le n$, by \eqref{eq-34} and $2N_{k}<n\le2N_{k+1}$, one has
	\begin{align*}
	\Vert H_m^{h_{k+2,\infty}}(x)-H_m^{h_{k+2,\infty}}(y)\Vert &=
	\Bigl\Vert \sum_{i=0}^{m-1}(h_{k+2,\infty}(R_\alpha^ix)-h_{k+2,\infty}(R_\alpha^iy))\Bigr\Vert \\
	&\le\frac{m}{N_{k+2}}\le\frac{4N_{k+1}}{N_{k+2}}<\epsilon.
	\end{align*}
	Hence,
	\begin{eqnarray}\label{eq-36}
	\bar{d}_{n}^{h_{k+2,\infty}}(x,y)<\epsilon.
	\end{eqnarray}
	
	\medskip
	Finally, let $\mathcal{S}=\{[\frac{j}{c_\epsilon},\frac{j+1}{c_\epsilon})
	\colon j=0,1\cdots,c_\epsilon-1\}$
	and put
	$$\mathcal{T}=(\mathcal{S}\vee\mathcal{P}\vee\mathcal{Q}\vee\I)\times\mathcal{S}.$$
	It is clear that $\mathcal{T}$ is a finite cover of $\mathbb{T}^2$ with $$\#(\mathcal{T})\le c_\epsilon\cdot c_\delta c_\epsilon^2\cdot 10c_\epsilon^4\cdot 10c_\epsilon^3\cdot c_\epsilon=100c_\epsilon^{11}c_\delta=C(\epsilon).$$
	Hence, for $(x_1,y_1), (x_2,y_2)\in W\in \mathcal{T}$,
	by Steps 1, 2, 3 and \eqref{eq-36}, one has
	\begin{align*}
	\bar d_{n}^h(x_1,x_2)&\le \bar d_{n}^{h_{1,k-1}}(x_1,x_2)+\bar d_{n}^{h_{k}}(x_1,x_2)+\bar d_{n}^{h_{k+1}}(x_1,x_2)+\bar d_{n}^{h_{k+2,\infty}}(x_1,x_2)\\
	&< 6\epsilon+4\epsilon+4\epsilon+\epsilon=15\epsilon.
	\end{align*}
	We deduce that
	\begin{align*}
	\bar{d}_n((x_1,y_1), (x_2,y_2))\le \Vert x_1-x_2\Vert +\Vert y_1-y_2\Vert +\bar d_{n}^h(x_1,x_2)< 17\epsilon.
	\end{align*}
	This implies $\overline{\spann}(n,17\epsilon)\le C(\epsilon)$
	for all $n>K(\epsilon)$,
	which ends the proof.
\end{proof}

\subsection{The construction of the system in Proposition~\ref{prop:key-example2}}
First we need the following Furstenberg's dichotomy result.
\begin{prop}[\cite{Fur61}]
	Suppose $(\Omega_0,\mu_0,T_0)$ is a uniquely ergodic topological dynamical system with $\mu_0$ being the unique ergodic measure,
	and $h:\Omega_0\to\mathbb{T}$ is a continuous function.
	Let $T :\Omega_0\times\mathbb{T}$ be defined by $T(x,y)=(T_0(x),y+h(x))$.
	Then exactly one of the following is true:
	\begin{itemize}
		\item[(1)]  $T$ is uniquely ergodic and
		$\mu_0\times m_{\mathbb{T}}$  is the unique invariant measure;
		\item[(2)]  there exists a measurable map
		$g : \Omega_0\to\mathbb{T}$
		and a non-zero integer $s$
		such that $s\cdot h(x) = g(T_0(x))-g(x)$
		for $\mu_0$-almost every $x\in \Omega_0$.
	\end{itemize}
\end{prop}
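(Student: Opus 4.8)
The plan is to describe all $T$-invariant Borel probability measures on $\Omega_0\times\mathbb{T}$ directly. The circle $\mathbb{T}$ acts on $\Omega_0\times\mathbb{T}$ by the fibrewise rotations $R_t(x,y)=(x,y+t)$, and each $R_t$ commutes with $T$; a Fubini computation shows that $\mu_0\times m_{\mathbb{T}}$ is $T$-invariant, and the goal is to prove that this is the \emph{only} $T$-invariant measure unless alternative (2) occurs. So I would start from an arbitrary $T$-invariant probability measure $\nu$. Its projection to $\Omega_0$ is $T_0$-invariant, hence equals $\mu_0$ by unique ergodicity, so $\nu$ disintegrates as $\nu=\int_{\Omega_0}\nu_x\,\dd\mu_0(x)$ with each $\nu_x$ a probability measure on $\mathbb{T}$, and $T$-invariance of $\nu$ translates into the cocycle identity $(R_{h(x)})_*\nu_x=\nu_{T_0x}$ for $\mu_0$-a.e.\ $x$.

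Next I would expand the fibre measures in Fourier series. Putting $\widehat{\nu_x}(n)=\int_{\mathbb{T}}e^{-2\pi\mathrm{i}ny}\,\dd\nu_x(y)$ for $n\in\mZ$, the cocycle identity becomes $\widehat{\nu_{T_0x}}(n)=e^{-2\pi\mathrm{i}nh(x)}\,\widehat{\nu_x}(n)$. Taking moduli, $x\mapsto|\widehat{\nu_x}(n)|$ is a bounded $T_0$-invariant measurable function, hence $\mu_0$-a.e.\ equal to a constant $c_n$, since unique ergodicity of $(\Omega_0,\mu_0,T_0)$ gives ergodicity. If $c_n=0$ for every $n\neq0$ then $\nu_x=m_{\mathbb{T}}$ for $\mu_0$-a.e.\ $x$, so $\nu=\mu_0\times m_{\mathbb{T}}$. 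Otherwise $c_n>0$ for some $n\neq0$; as $x\mapsto\widehat{\nu_x}(n)$ is measurable with values on the circle of radius $c_n$, I would pick a measurable branch of its argument and write $\widehat{\nu_x}(n)=c_n e^{2\pi\mathrm{i}\phi(x)}$ with $\phi\colon\Omega_0\to\mathbb{T}$ measurable; the identity then forces $\phi(T_0x)-\phi(x)=-nh(x)\pmod1$, which is precisely alternative (2) with $s=n$ and $g=-\phi$. Thus every $T$-invariant measure is either $\mu_0\times m_{\mathbb{T}}$ or yields a solution of (2); in particular, if (2) fails, then $\mu_0\times m_{\mathbb{T}}$ is the unique $T$-invariant measure and (1) holds.

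To complete the dichotomy I would rule out (1) and (2) holding simultaneously. Given a measurable $g$ and $s\neq0$ with $s\cdot h(x)=g(T_0x)-g(x)$ for $\mu_0$-a.e.\ $x$, the function $\pi(x,y):=sy-g(x)\pmod1$ is, up to $\mu_0\times m_{\mathbb{T}}$-null sets, a $T$-invariant measurable function on $\Omega_0\times\mathbb{T}$. Since $y\mapsto sy$ is a nontrivial self-covering of $\mathbb{T}$, one has $\pi_*(\mu_0\times m_{\mathbb{T}})=m_{\mathbb{T}}$, which is not a Dirac mass, so $\mu_0\times m_{\mathbb{T}}$ is not ergodic and $T$ therefore carries more than one invariant measure, i.e.\ (1) fails. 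Combining the two implications shows that exactly one of (1), (2) holds.

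I expect the only genuinely delicate points to be bookkeeping: the measurability of the disintegration $x\mapsto\nu_x$ and of the maps $x\mapsto\widehat{\nu_x}(n)$, the measurable selection of the argument $\phi$, and the observation that $s\neq0$ is exactly what makes the invariant function $\pi$ non-constant in the last step. The Fourier-coefficient identity together with the ergodicity of $\mu_0$ is the conceptual core, and that part is short.
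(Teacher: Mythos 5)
Your argument is correct. Note that the paper does not prove this proposition at all: it is quoted from Furstenberg's 1961 paper \emph{Strict ergodicity and transformation of the torus}, and your proof --- disintegrating an arbitrary invariant measure over $\mu_0$, reading off the cocycle identity on the fibre measures, and using ergodicity of $\mu_0$ to make each $|\widehat{\nu_x}(n)|$ constant, so that a nonvanishing coefficient yields the coboundary equation while vanishing of all of them forces $\nu=\mu_0\times m_{\mathbb{T}}$ --- is essentially Furstenberg's original argument, including the converse step showing that a solution of (2) produces the non-constant invariant function $(x,y)\mapsto sy-g(x)$ and hence destroys unique ergodicity. The only points needing care are exactly the ones you flag (a.e.\ validity of the disintegration identities and the fact that the argument selection is harmless because $g$ takes values in $\mathbb{T}$ rather than $\mathbb{R}$), and none of them causes a problem.
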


Now we modify the example $(\mathbb{T}^2,T)$ in the previous subsection
to be uniquely ergodic.
As $(\mathbb{T}^2,T)$ is not uniquely ergodic,
by Furstenberg's dichotomy result there is an $m_\mathbb{T}$-measurable function $g(x)$ and a non-zero integer $s$ such that
\begin{equation}\label{11-14-2}
	s\cdot h(x)=g(x+\alpha)-g(x)
\end{equation}
	for $m_\mathbb{T}$-a.e. $x\in\mathbb{T}$.
	We define
	\[\phi\colon
	\mathbb{T}^2\to\mathbb{T}^2,\quad (x,y)\mapsto(x,s\cdot y)\]
	and
	\[\widetilde{T}\colon\mathbb{T}^2\to\mathbb{T}^2,
	\quad (x,y)\mapsto(x+\alpha,y+s\cdot h(x)).\]
	Then $\widetilde{T}\circ \phi=\phi\circ T$, in other words, the following diagram commutes.
	\[\xymatrix{
		\mathbb{T}^2 \ar[r]^{T}\ar[d]_{\phi} &    \mathbb{T}^2 \ar[d]_{\phi}\\
		\mathbb{T}^2 \ar[r]^{\widetilde{T}} &    \mathbb{T}^2
	}
	\]
	Take an irrational number $\beta\in\mathbb{R}$ such that
	$\alpha$ and $\beta$ are rationally independent.
	Then the system defined by
	\[T_{\alpha,\beta}\colon  \mathbb{T}^2\to\mathbb{T}^2,\quad  (x,y)\to(x+\alpha,y+\beta)\] is uniquely ergodic and $m_{\mathbb{T}^2}$ is the unique invariant measure.
	Finally, we define
	\[\widetilde{T}_{\beta}\colon  \mathbb{T}^2\to\mathbb{T}^2,
	\quad (x,y)\mapsto(x+\alpha,y+s\cdot h(x)+\beta).\]
	We will show that the system $(\mathbb{T}^2,\widetilde{T}_{\beta})$ is the one we need.
	It is clear that $(\mathbb{T}^2,\widetilde{T}_{\beta})$ is distal.
	
	\medskip
\begin{prop}
$(\mathbb{T}^2,\widetilde{T}_{\beta})$ is uniquely ergodic and minimal.
\end{prop}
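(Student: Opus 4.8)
The plan is to invoke Furstenberg's dichotomy (the Proposition stated just above) for the skew product $\widetilde{T}_\beta(x,y)=(R_\alpha x,\,y+\psi(x))$ over the uniquely ergodic base $(\mathbb{T},R_\alpha)$, where the cocycle $\psi(x):=s\cdot h(x)+\beta$ is continuous. This yields exactly one of: (1) $\widetilde{T}_\beta$ is uniquely ergodic with $m_{\mathbb{T}^2}=m_{\mathbb{T}}\times m_{\mathbb{T}}$ its unique invariant measure; or (2) there are a measurable $G\colon\mathbb{T}\to\mathbb{T}$ and a non-zero integer $t$ with $t\,\psi(x)=G(R_\alpha x)-G(x)$ for $m_{\mathbb{T}}$-a.e.\ $x$. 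I would then show that (2) cannot occur, so (1) holds.

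To rule out (2), assume it and multiply \eqref{11-14-2} by $t$ to get $ts\,h=t(g\circ R_\alpha-g)$ a.e.; subtracting this from $t\,\psi=ts\,h+t\beta=G\circ R_\alpha-G$ gives
\[
t\beta=H(R_\alpha x)-H(x)\quad\text{for }m_{\mathbb{T}}\text{-a.e.\ }x,\qquad H:=G-tg,
\]
an identity in $\mathbb{R}/\mathbb{Z}$ with $H$ measurable. Then $F:=e^{2\pi i H}$ is a measurable unimodular function with $F(R_\alpha x)=e^{2\pi i t\beta}F(x)$ a.e., i.e.\ an eigenfunction of $R_\alpha$ on $L^2(\mathbb{T},m_{\mathbb{T}})$ for the eigenvalue $e^{2\pi i t\beta}$. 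Since the group of eigenvalues of the irrational rotation $R_\alpha$ is $\{e^{2\pi i k\alpha}\colon k\in\mathbb{Z}\}$, this forces $t\beta-k\alpha\in\mathbb{Z}$ for some $k\in\mathbb{Z}$; as $t\neq 0$, this contradicts the rational independence of $\alpha$ and $\beta$ (the same hypothesis that makes $T_{\alpha,\beta}$ uniquely ergodic). Hence (1) holds, and $\widetilde{T}_\beta$ is uniquely ergodic with unique invariant measure $m_{\mathbb{T}^2}$.

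For minimality I would observe that unique ergodicity together with $\supp m_{\mathbb{T}^2}=\mathbb{T}^2$ already forces minimality: any non-empty proper closed $\widetilde{T}_\beta$-invariant set $Y$ would contain a minimal subsystem $M$ (by Zorn's lemma), which carries a $\widetilde{T}_\beta$-invariant measure supported in $M\subseteq Y\subsetneq\mathbb{T}^2$, contradicting unique ergodicity of the full-support measure $m_{\mathbb{T}^2}$. (Alternatively, since $(\mathbb{T}^2,\widetilde{T}_\beta)$ is distal every orbit closure is minimal, and two disjoint minimal sets would carry two distinct ergodic measures, so $\mathbb{T}^2$ is itself minimal.)

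The only step with real content is the elimination of alternative (2), where the crux is translating the mod-$\mathbb{Z}$ cohomological equation for $\psi$ into an eigenvalue relation for $R_\alpha$ and invoking the rational independence of $\alpha,\beta$; everything else is formal, and I do not expect a genuine obstacle.
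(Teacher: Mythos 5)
Your proof is correct, but the route to unique ergodicity is genuinely different from the paper's. The paper does not re-invoke Furstenberg's dichotomy: it uses the coboundary equation $s\cdot h=g\circ R_\alpha-g$ (valid on a set $K$ with $m_{\mathbb{T}}(K)=1$) to build the explicit measurable map $\pi(x,y)=(x,y-g(x))$, checks that $\pi\circ\widetilde{T}_{\beta}=T_{\alpha,\beta}\circ\pi$ on $K\times\mathbb{T}$, and then pushes any $\widetilde{T}_{\beta}$-invariant measure forward to a $T_{\alpha,\beta}$-invariant one (every invariant measure gives full mass to $K\times\mathbb{T}$ because its first marginal must be $m_{\mathbb{T}}$); unique ergodicity of the rotation $T_{\alpha,\beta}$ then pins down the measure as $m_{\mathbb{T}^2}\circ\pi=m_{\mathbb{T}^2}$. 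You instead apply the dichotomy a second time to the cocycle $s\cdot h+\beta$ and kill the coboundary alternative by converting it, via \eqref{11-14-2}, into the eigenvalue relation $F\circ R_\alpha=e^{2\pi i t\beta}F$, which contradicts the fact that the point spectrum of the irrational rotation is $\{e^{2\pi i k\alpha}\colon k\in\mathbb{Z}\}$ once $1,\alpha,\beta$ are rationally independent. Both arguments are sound and rest on the same independence hypothesis; the paper's conjugacy is more constructive and avoids the spectral input, while yours is shorter at the cost of quoting the eigenvalue group of $R_\alpha$ and a second use of the dichotomy. The minimality step (full support of the unique invariant measure forces minimality) is the same in both.
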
	
\begin{proof}
Let $K=\{x\in \mathbb{T}\colon s\cdot h(x)=g(x+\alpha)-g(x)\}$ and $\pi\colon \mathbb{T}^2\to\mathbb{T}^2$, $(x,y)\mapsto(x,y-g(x))$.
By \eqref{11-14-2} one has $m_\mathbb{T}(K)=1$.
It is easy to see that
	$\pi\colon K\times\mathbb{T}\to K\times\mathbb{T}$
is an invertible map with $\pi\circ\widetilde{T}_{\beta}=T_{\alpha,\beta}\circ\pi$.
For each $\widetilde{T}_{\beta}$-invariant measure $\mu$, we have $\mu(K\times\mathbb{T})=1$ and $\mu\circ\pi^{-1}$ is
$T_{\alpha,\beta}$-invariant. We have $\mu\circ\pi^{-1}=m_{\mathbb{T}^2}$ since $m_{\mathbb{T}^2}$ is the unique invariant probability measure of $T_{\alpha,\beta}$.
Thus, $\mu=m_{\mathbb{T}^2}\circ\pi$.
This implies that 	$m_{\mathbb{T}^2}\circ\pi$ is the only invariant measure for
$(\mathbb{T}^2,\widetilde{T}_{\beta})$. Moreover, $(\mathbb{T}^2,\widetilde{T}_{\beta})$ is minimal since the only invariant measure $m_{\mathbb{T}^2}\circ \pi$ is of full support.
\end{proof}	

\begin{prop}
$(\mathbb{T}^2,\widetilde{T}_{\beta})$ is not equicontinuous.
\end{prop}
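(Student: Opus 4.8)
The plan is to reduce the non-equicontinuity of $(\T^2,\widetilde{T}_{\beta})$ to a statement about the cocycle $sh$ over $R_\alpha$, and then re-run the argument already used to show that $(\T^2,T)$ is not equicontinuous. First I would observe that, exactly as for $T$, one has $\widetilde{T}_{\beta}^{\,n}(x,y)=(R_\alpha^nx,\,y+sH_n^h(x)+n\beta)$ for all $n\in\N$, where $H_n^h(x)=\sum_{i=0}^{n-1}h(R_\alpha^ix)$; hence, for the metric $d$ on $\T^2$ fixed above,
\[
d\bigl(\widetilde{T}_{\beta}^{\,n}(x_1,y_1),\widetilde{T}_{\beta}^{\,n}(x_2,y_2)\bigr)
=\Vert x_1-x_2\Vert+\bigl\Vert (y_1-y_2)+s\bigl(H_n^h(x_1)-H_n^h(x_2)\bigr)\bigr\Vert ,
\]
which is independent of $\beta$. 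So it suffices to produce, for every $\rho>0$, points $x_1,x_2\in\T$ with $\Vert x_1-x_2\Vert<\rho$ and an $n\in\N$ with $\Vert s(H_n^h(x_1)-H_n^h(x_2))\Vert\ge\frac18$; then $(x_1,0)$ and $(x_2,0)$ are $\rho$-close points of $\T^2$ whose $\widetilde{T}_{\beta}$-orbits separate by at least $\frac18$, which rules out equicontinuity.

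To produce such points I would imitate the proof that $(\T^2,T)$ is not equicontinuous, changing only the choice of the time. Fix a large $k$ and set $x'=\delta_k+\frac12\ell_k$, so that $R_\alpha^ix'\in E_k^c$ for $0\le i<N_k$. Using \eqref{20-2} (equivalently \eqref{eq:Ejk}), which bounds by less than $\eta$ the density of visits of any orbit to $E_{1,k-1}=\bigcup_{i=1}^{k-1}E_i$ over a long enough initial segment, I would choose first an integer $n_1\in[0,M_k)$ with $R_\alpha^{n_1}0,\ R_\alpha^{n_1}x'\notin E_{1,k-1}$, and then an integer $m\in[0,N_k/2]$ with $R_\alpha^{n_1+m}0,\ R_\alpha^{n_1+m}x'\notin E_{1,k-1}$ and, in addition, $\Vert s\,m/N_k\Vert\ge\frac14$. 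Such an $m$ exists because the residues $sm\bmod N_k$ (for $0\le m\le N_k/2$) land in the arc $[\tfrac14 N_k,\tfrac34 N_k]$ for a proportion of $m$ bounded below by an absolute constant, whereas the $m$ violating the first requirement form a set of density at most $4\eta$ in $[0,N_k/2]$. Put $x_1=R_\alpha^{n_1}0$, $x_2=R_\alpha^{n_1}x'$ and $n=m$; then $\Vert x_1-x_2\Vert=\Vert x'\Vert=\delta_k+\frac12\ell_k\to 0$ as $k\to\infty$.

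The computation is then the same termwise analysis as before. By Lemma~\ref{lm} together with the choice of $n_1$ and $m$, $H_m^{h_i}(x_1)=H_m^{h_i}(x_2)=0$ for $1\le i\le k-1$; since $n_1+m<N_k$ and $R_\alpha^ix'\in E_k^c$ for $0\le i<N_k$, \eqref{eq-first1} gives $H_m^{h_k}(x_2)=0$ and $H_m^{h_k}(x_1)=m/N_k$; and by \eqref{eq:hk-bounded}, $\Vert H_m^{h_{k+1,\infty}}(x_1)-H_m^{h_{k+1,\infty}}(x_2)\Vert\le 2\sum_{i\ge k+1}m/N_i<\frac{2}{9\cdot 10^k}$. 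Therefore
\[
\bigl\Vert s\bigl(H_m^h(x_1)-H_m^h(x_2)\bigr)\bigr\Vert
\ \ge\ \Vert s\,m/N_k\Vert-|s|\cdot\frac{2}{9\cdot 10^k}
\ \ge\ \frac14-|s|\cdot\frac{2}{9\cdot 10^k}\ >\ \frac18
\]
once $k$ is large (depending only on the fixed nonzero integer $s$). Letting $k\to\infty$ produces the required pairs, so $(\T^2,\widetilde{T}_{\beta})$ is not equicontinuous.

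The step I expect to be the main obstacle is the selection of the time $m$ in the second paragraph: in the treatment of $(\T^2,T)$ any $m$ with $m/N_k\in[\tfrac25,\tfrac12]$ worked, but here $m$ must in addition be nudged so that $s\,m/N_k$ stays bounded away from every integer, and its size is of order $N_k$ rather than $M_k$, so the requirement that the orbits of $0$ and of $x'$ avoid $E_{1,k-1}$ at times $n_1$ and $n_1+m$ must be met through the global density estimate on $[0,N_k]$ rather than on a short window — one has to check that the two density conditions on $m$ are compatible (they are, since $4\eta=\tfrac1{25}$ is far below the proportion of good $m$). One must also reuse, verbatim, the facts established for $(\T^2,T)$ that $x'=\delta_k+\frac12\ell_k$ satisfies $R_\alpha^ix'\in E_k^c$ for $0\le i<N_k$ and that the $h_{1,k-1}$-contributions vanish by Lemma~\ref{lm}.
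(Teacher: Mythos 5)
Your proof is correct and follows the paper's strategy almost verbatim: the same pair of points $R_\alpha^{n_1}0$ and $R_\alpha^{n_1}x'$ with $x'=\delta_k+\frac12 l_k$, the same decomposition $h=h_{1,k-1}+h_k+h_{k+1,\infty}$, Lemma~\ref{lm} to kill the low-index terms, the exact value $m/N_k$ for the $h_k$-contribution along the orbit of $0$ versus $0$ along the orbit of $x'$, and the geometric tail bound for $h_{k+1,\infty}$. The single point of divergence is the one you flagged, namely how to prevent the multiplication by $s$ from collapsing the separation modulo $1$. The paper does this by shrinking the time window: with $10^p\le|s|<10^{p+1}$ it takes $n_2-n_1\approx 10^{k-p-2}M_k=10^{-p-2}N_k$, so that $|s|\,(n_2-n_1)/N_k$ together with its error terms stays inside $(0,\tfrac12)$ and the circle norm coincides with the absolute value, yielding separation at least $\tfrac1{200}$. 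You instead keep $m$ of order $N_k$ and select it so that $\Vert sm/N_k\Vert\ge\tfrac14$ directly; your counting argument is sound, since the arithmetic progression $sm/N_k \pmod 1$ has step $|s|/N_k\ll 1$ and spends a proportion essentially $\tfrac12$ of the times $m\in[0,N_k/2]$ in $[\tfrac14,\tfrac34]$, while the set excluded by the $E_{1,k-1}$-avoidance condition has density at most $4\eta=\tfrac1{25}$ there, so the two conditions are compatible. Both devices work; yours gives the better constant $\tfrac18$ at the price of a small equidistribution count, the paper's avoids any such count by an explicit, $s$-dependent choice of the time.
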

\begin{proof}
	It is sufficient to show for any $\epsilon>0$,
	there exist $(x_1,y_1), (x_2,y_2)\in\mathbb{T}^2$ and a positive integer $n$ such that $d((x_1,y_1),(x_2,y_2))\le\epsilon$ and
$d(\widetilde{T}_{\beta}^n(x_1,y_1),\widetilde{T}_{\beta}^n(x_2,y_2))\ge\frac{1}{200}$.
	
	Assuming that $10^p\le |s|<10^{p+1}$ for some non-negative integer $p$. Given $\epsilon>0$, there exists $k\in\mathbb{N}$
such that $k>p+10$ and $l_k+\delta_k<\epsilon.$ Put $x'=\delta_k+\frac{1}{2}l_k.$
	One has $R_\alpha^{i}x'\in E_k^c$ and $h_k(R_\alpha^{i}x')=0$ for $i=0,1,2,\cdots,N_k-1$.
	By \eqref{20-2}, for any $x\in \mathbb{T}$,
	\begin{equation*}
	\frac{1}{M_k}\#\{0\le i\le M_k-1:R_\alpha^ix \in E_{1,k-1}^c\}\ge1-\sum_{i=1}^\infty\frac{\eta}{2^{i}}>\frac{1}{2}.
	\end{equation*}
	Then there are integers $n_1\in [0,M_k-1]$ and $n_2\in [10^{k-p-2}M_k-M_k,10^{k-p-2}M_k-1]$ such that
	$R_\alpha^{n_1}0,R_\alpha^{n_1}x', R_\alpha^{n_2}0,R_\alpha^{n_2}x'\in E_{1,k-1}^c$.
	By using Lemma \ref{lm} and the fact $R_\alpha^{n_1}x',R_\alpha^{n_2}x'\in E_k^c$, we have
	\begin{align*}
	H_{n_2-n_1}^{h}(R_\alpha^{n_1}0)-H_{n_2-n_1}^{h}(R_\alpha^{n_1}x')
	&=H_{n_2-n_1}^{h_{1,k-1}}(R_\alpha^{n_1}0)-H_{n_2-n_1}^{h_{1,k-1}}(R_\alpha^{n_1}x')
	+H_{n_2-n_1}^{h_k}(R_\alpha^{n_1}0)\\
	&\qquad -H_{n_2-n_1}^{h_k}(R_\alpha^{n_1}x')+H_{n_2-n_1}^{h_{k+1,\infty}}(R_\alpha^{n_1}0)-H_{n_2-n_1}^{h_{k+1,\infty}}(R_\alpha^{n_1}x')\\
	&= H_{n_2-n_1}^{h_k}(R_\alpha^{n_1}0)+H_{n_2-n_1}^{h_{k+1,\infty}}(R_\alpha^{n_1}0)-H_{n_2-n_1}^{h_{k+1,\infty}}(R_\alpha^{n_1}x')\\
	&=(n_2-n_1)\frac{1}{N_k}+H_{n_2-n_1}^{h_{k+1,\infty}}(R_\alpha^{n_1}0)-H_{n_2-n_1}^{h_{k+1,\infty}}(R_\alpha^{n_1}x').
	\end{align*}
	Moreover, we have
	\begin{align*}
	\Vert H_{n_2-n_1}^{h_{k+1,\infty}}(R_\alpha^{n_1}0)-H_{n_2-n_1}^{h_{k+1,\infty}}(R_\alpha^{n_1}x') \Vert
	&\le \sum_{i=k+1}^\infty(|H_{n_2-n_1}^{h_i}(R_\alpha^{n_1}0)|+|H_{n_2-n_1}^{h_i}(R_\alpha^{n_1}x')|)\\
	&\le \sum_{i=k+1}^\infty 2(n_2-n_1)\frac{1}{10^iM_i}\le \sum_{i=k+1}^\infty \frac{2\cdot 10^{k-p-2}M_k}{10^iM_i}\\
	&\le2\sum_{i=k+1}^\infty\frac{10^{-p-2}}{10^{i-k}}
	=\frac{2}{9}\cdot 10^{-p-2}.
	\end{align*}
	Note that $10^{k-p-2}M_k-2M_k\le n_2-n_1\le 10^{k-p-2}M_k$.
	One has
	$$|s|\cdot \Vert H_{n_2-n_1}^{h}(R_\alpha^{n_1}0)-H_{n_2-n_1}^{h}(R_\alpha^{n_1}x')\Vert
	\le |s|\cdot \Bigl (10^{-p-2}+\frac{2}{9}\cdot 10^{-p-2}\Bigr) \le\frac{2}{10}.$$
	and
	$$|s|\cdot \Vert H_{n_2-n_1}^{h}(R_\alpha^{n_1}0)-H_{n_2-n_1}^{h}(R_\alpha^{n_1}x')\Vert
	\ge|s|\cdot\Bigl(10^{-p-2}-\frac{2}{10^k}-\frac{2}{9}\cdot 10^{-p-2}\Bigr)\ge\frac{1}{200}.$$
	Let $x_1=R_\alpha^{n_1}0$, $x_2=R_\alpha^{n_1}x'$,
	$y_1=y_2=0$ and $n=n_1-n_2$.
	Then
	\[
	d((x_1,y_1),(x_2,y_2))=
	\Vert R_\alpha^{n_1}0-R_\alpha^{n_1}x' \Vert=
	\Vert x' \Vert =\delta_k+\frac{1}{2}\ell_k<\epsilon.
	\]
	and
	\begin{align*}
	d(\widetilde{T}_\beta^{n}(x_1,y_1),&
	\widetilde{T}_\beta^{n}(x_2,y_2))\\
	&=d\Bigl((R_\alpha^{n_2}0,s\cdot H^h_{n_2-n_1}(R_\alpha^{n_1}0)+(n_2-n_1)\beta),
	\\ &\qquad\qquad (R_\alpha^{n_2}x',s\cdot H^h_{n_2-n_1}(R_\alpha^{n_1}x')+(n_2-n_1)\beta\Bigr)\\
	&\ge \bigl\Vert s\cdot (H_{n_2-n_1}^{h}(R_\alpha^{n_1}0)-H_{n_2-n_1}^{h}(R_\alpha^{n_1}x'))\bigr\Vert \ge\frac{1}{200}.
	\end{align*}
	This implies that $(\mathbb{T}^2,\widetilde{T}_\beta)$ is not equicontinuous.
\end{proof}
	
\begin{prop}
		$(\mathbb{T}^2,\widetilde{T}_{\beta})$ has bounded topological complexity with respect to $\{\bar{d}_n\}$.
\end{prop}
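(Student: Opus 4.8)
The plan is to deduce this from Proposition~\ref{pro-1} by two elementary reductions: first passing from $T$ to $\widetilde{T}$ through the factor map $\phi$, and then observing that the extra rotation by $\beta$ does not change the metrics $\bar d_n$.

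\textbf{Step 1: $(\mathbb{T}^2,\widetilde{T})$ has bounded topological complexity with respect to $\{\bar d_n\}$.} Recall that $\phi(x,y)=(x,s\cdot y)$ is a continuous surjection (since $s\neq 0$) with $\widetilde{T}\circ\phi=\phi\circ T$. Moreover $\phi$ is Lipschitz for the metric $d$: as $s\in\mathbb{Z}$ with $|s|\geq 1$ we have $\Vert s\cdot a\Vert\leq|s|\,\Vert a\Vert$ for every $a\in\mathbb{T}$, hence $d(\phi p,\phi q)\leq |s|\,d(p,q)$. In general, bounded topological complexity with respect to $\{\bar d_n\}$ passes to Lipschitz factors: if $\pi\colon(Z,S)\to(W,R)$ is a factor map that is Lipschitz with constant $L\geq1$, and $F=\{z_1,\dots,z_m\}\subset Z$ satisfies $Z=\bigcup_i B_{\bar d_n}(z_i,\epsilon/L)$ for the metric $\bar d_n$ of $(Z,S)$, then for any $w=\pi z\in W$ one picks $z_i\in F$ with $\bar d_n(z,z_i)<\epsilon/L$ and, using $R^j\pi=\pi S^j$, obtains
\[\bar d_n(w,\pi z_i)=\frac1n\sum_{j=0}^{n-1}d_W(\pi S^jz,\pi S^jz_i)\leq L\,\bar d_n(z,z_i)<\epsilon,\]
so $\{\pi z_1,\dots,\pi z_m\}$ is an $(\bar d_n,\epsilon)$-spanning set for $W$. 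Thus $\overline{\spann}$ for $(W,R)$ at scale $\epsilon$ is bounded by that of $(Z,S)$ at scale $\epsilon/L$. Applying this with $\pi=\phi$, $L=|s|$, together with Proposition~\ref{pro-1}, gives the claim.

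\textbf{Step 2: the metrics $\bar d_n$ for $\widetilde{T}_\beta$ and $\widetilde{T}$ coincide.} A straightforward induction (as for $T$, using $H_n^h(x)=\sum_{i=0}^{n-1}h(R_\alpha^ix)$) gives $\widetilde{T}^n(x,y)=(x+n\alpha,\,y+s\cdot H_n^h(x))$ and $\widetilde{T}_\beta^n(x,y)=(x+n\alpha,\,y+s\cdot H_n^h(x)+n\beta)$, hence $\widetilde{T}_\beta^n=\sigma_n\circ\widetilde{T}^n$ where $\sigma_n(x,y)=(x,y+n\beta)$. Since $d((x_1,y_1),(x_2,y_2))=\Vert x_1-x_2\Vert+\Vert y_1-y_2\Vert$ is invariant under the common translation $\sigma_n$, for all $p,q\in\mathbb{T}^2$ and all $n$ we get $d(\widetilde{T}_\beta^np,\widetilde{T}_\beta^nq)=d(\sigma_n\widetilde{T}^np,\sigma_n\widetilde{T}^nq)=d(\widetilde{T}^np,\widetilde{T}^nq)$. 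Therefore the sequence $\{\bar d_n\}$ computed along $\widetilde{T}_\beta$-orbits is identical to the one computed along $\widetilde{T}$-orbits, so the two systems have the same $\overline{\spann}(n,\cdot)$ functions. Combining this with Step~1 proves that $(\mathbb{T}^2,\widetilde{T}_\beta)$ has bounded topological complexity with respect to $\{\bar d_n\}$.

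\textbf{Main obstacle.} There is no substantial difficulty beyond Proposition~\ref{pro-1}. The only points requiring care are that the factor map $\phi$ must be \emph{Lipschitz} rather than merely uniformly continuous, since knowing $\bar d_n(z,z_i)$ is small does not by itself control the individual distances $d(S^jz,S^jz_i)$; and that the auxiliary translations $\sigma_n$ act in the same way on both orbits, which is exactly why the rotation by $\beta$ is invisible to the averaged metrics $\bar d_n$.
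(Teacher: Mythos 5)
Your proof is correct. It reaches the same conclusion by a slightly more modular route than the paper: the paper simply reuses the explicit cover $\mathcal{T}$ constructed in the proof of Proposition~\ref{pro-1} and re-estimates, noting that $\bar d_n^{s\cdot h+\beta}(x_1,x_2)=\frac1n\sum_m\Vert sH_m^h(x_1)-sH_m^h(x_2)\Vert\le |s|\,\bar d_n^h(x_1,x_2)\le 15|s|\ep$, which yields $\overline{\spann}(n,(15|s|+2)\ep)\le 100c_\ep^{11}c_\delta$ directly. You instead isolate two general principles --- that bounded complexity with respect to $\{\bar d_n\}$ passes to Lipschitz factors, and that post-composing each $\widetilde{T}^n$ with the isometry $\sigma_n(x,y)=(x,y+n\beta)$ leaves every $\bar d_n$ unchanged --- and both of your lemmas are proved correctly (in particular you rightly check that $\phi$ is surjective and that $\Vert s\cdot a\Vert\le|s|\,\Vert a\Vert$, which is exactly the inequality the paper uses implicitly). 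The underlying computation is therefore the same; what your packaging buys is reusability (any Lipschitz factor of $(\mathbb{T}^2,T)$ inherits the property) at the cost of losing the explicit constants, while the paper's version keeps the quantitative bound on $\overline{\spann}$ in terms of $c_\ep$ and $c_\delta$. You are also right that Lipschitz continuity of $\phi$, not mere uniform continuity, is the point where care is needed, since smallness of the averaged distance $\bar d_n$ gives no control on the individual terms $d(T^jz,T^jz_i)$.
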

\begin{proof}
	For $\epsilon>0$, let $\mathscr{T}$, $c_\epsilon$ and $c_\delta$ be defined in Proposition \ref{pro-1}. Then for $(x_1,y_1),(x_2,y_2)\in W\in \mathscr{T}$, one has
	\begin{align*}
	\bar d_{n}^{s\cdot h+\beta}(x_1,x_2)&=\frac{1}{n}\sum_{m=0}^{n-1}
	\Vert H_m^{sh+\beta}(x)-H_m^{sh+\beta}(y)\Vert\\ &=\frac{1}{n}\sum_{m=0}^{n-1}\Vert sH_m^{h}(x)-sH_m^{h}(y)\Vert \\
	&\le\frac{1}{n}\sum_{m=0}^{n-1}|s|\cdot \Vert H_m^{h}(x)-H_m^{h}(y)\Vert \\
	&\le |s|\cdot \bar d_{n}^{h}(x_1,x_2)\le15|s|\epsilon.
	\end{align*}
	and
	\begin{align*}
	\bar{d}_n((x_1,y_1), (x_2,y_2))\le \Vert x_1-x_2\Vert +\Vert y_1-y_2\Vert +\bar d_{n}^h(x_1,x_2)\le (15|s|+2)\epsilon.
	\end{align*}
	Hence $\overline{\spann}(n,(15|s|+2)\epsilon)\le100c_\epsilon^{11}c_\delta$. Thus $(\mathbb{T}^2,\widetilde{T}_{\beta})$
has bounded topological complexity with respect to $\{\bar{d}_n\}$.
\end{proof}

\section{An Example by Cyr and Kra}
We first introduce some concepts.
Following \cite{D06}, by an \textit{assignment}, 
we mean a function $\Psi$ defined on an abstract metrizable Choquet simplex $\mathcal{P}$, whose ``value'' are measure-theoretic dynamical systems, i.e., for $p\in\mathcal{P}$, $\Psi(p)$ has the form $(X_p,\mathcal{B}_p,\mu_p,T_p)$.
Two assignments, $\Psi$ on a simplex $\mathcal{P}$ and $\Psi'$ on a simplex $\mathcal{P}'$,
are said to be \emph{equivalent} if there exists an affine homeomorphism
$\pi\colon\mathcal{P}\to\mathcal{P}'$ of Choquet simplexes such that for every $p\in\mathcal{P}$
the systems $\Psi(p)$ and $\Psi'(\pi(p))$ are isomorphic as measure-theoretic dynamical systems.
A topological dynamical system $(X,T)$ determines an assignment on the simplex of $T$-invariant probability measures by the rule $\mu\to (X,\mathcal{B}_X,\mu,T)$, where $\mathcal{B}_X$ denotes the collection of Borel sets on $X$.
By \cite[Theorem 1]{D06} or \cite[Theorem 1]{KO06},
we know that if $Y$ is zero dimensional and
$(Y,S)$ has no periodic points, then the assignment
determined by $(Y,S)$ is equivalent to an assignment determined by some
minimal system $(X,T)$.
If $(Y,S)$ is invertible, then we can require that $(X,T)$ is also invertible \cite {KO06}.
Applying \cite[Theorem 1]{D06} or \cite[Theorem 1]{KO06},
there is a minimal system $ (X,T)$
whose assignment is equivalent to that of $(Y,S)$.

\begin{prop}\label{prop:Cyr-Kra}
There exists a minimal system with bounded
complexity with respect to $\{\bar{d}_n\}$ for an invariant measure $\mu$, for which there exist two non-isomorphic ergodic measures
in the ergodic decomposition.
\end{prop}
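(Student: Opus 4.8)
The plan is to realize the example inside a minimal system by first building a convenient zero‑dimensional model and then invoking the assignment‑realization theorem of Downarowicz and Kerr–Li quoted above. First I would choose two \emph{non-isomorphic} building blocks with discrete spectrum. Let $(Y_1,S_1)$ be the dyadic odometer and $(Y_2,S_2)$ the triadic odometer: each is a zero‑dimensional, minimal, uniquely ergodic invertible system with no periodic points, with unique invariant measure $\nu_i$, and each $(Y_i,\mathcal B_i,\nu_i,S_i)$ has discrete spectrum, with eigenvalue group $\mathbb Z[1/2]/\mathbb Z$ and $\mathbb Z[1/3]/\mathbb Z$ respectively; in particular the two measure‑preserving systems are not isomorphic. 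Form the topological disjoint union $Y=Y_1\sqcup Y_2$ with $S|_{Y_i}=S_i$. Then $(Y,S)$ is zero‑dimensional and invertible and has no periodic points, and since $Y_1,Y_2$ are $S$-invariant clopen sets and each $S_i$ is uniquely ergodic, the simplex of $S$-invariant measures is the segment $\{a\nu_1+(1-a)\nu_2:a\in[0,1]\}$. Set $\mu=\tfrac12\nu_1+\tfrac12\nu_2$.

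Next I would check that $\mu$ has the two properties we want. Because $L^2(\mu)=L^2(\nu_1)\oplus L^2(\nu_2)$ (splitting functions according to the two clopen pieces) and any eigenfunction for $S_i$, extended by $0$ to the other piece, is an eigenfunction for $S$ on $(Y,\mathcal B,\mu,S)$, the space $L^2(\mu)$ is spanned by eigenfunctions, i.e.\ $\mu$ has discrete spectrum. Its ergodic decomposition is exactly $\tfrac12\nu_1+\tfrac12\nu_2$, whose two ergodic components $\nu_1,\nu_2$ are non-isomorphic by the previous paragraph.

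Then I would transport this to a minimal system. Applying \cite[Theorem 1]{D06} or \cite[Theorem 1]{KO06} to the invertible zero‑dimensional $(Y,S)$ (which has no periodic points), there exist an invertible minimal system $(X,T)$ and an affine homeomorphism $\pi$ from the simplex of $S$-invariant measures onto the simplex of $T$-invariant measures with $(Y,\mathcal B,m,S)\cong(X,\mathcal B_X,\pi(m),T)$ for every invariant $m$. Put $\widetilde\mu=\pi(\mu)$. Since $(X,\mathcal B_X,\widetilde\mu,T)\cong(Y,\mathcal B,\mu,S)$ has discrete spectrum and $T$ is invertible, Proposition~\ref{prop:discrete-specturm-HWY} gives that $\widetilde\mu$ has bounded complexity with respect to $\{\bar d_n\}$. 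As $\pi$ is an affine homeomorphism of Choquet simplices it carries extreme points to extreme points, so $\widetilde\mu=\tfrac12\pi(\nu_1)+\tfrac12\pi(\nu_2)$ is the ergodic decomposition of $\widetilde\mu$ with $\pi(\nu_1),\pi(\nu_2)$ ergodic; and $(X,\pi(\nu_i),T)\cong(Y_i,\nu_i,S_i)$ forces $\pi(\nu_1)\not\cong\pi(\nu_2)$. Hence $(X,T)$, with the measure $\widetilde\mu$, is as required.

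The construction is soft, so there is no real analytic obstacle; the only points requiring care are bookkeeping ones. One must make sure the assignment‑equivalence of \cite{D06,KO06} genuinely transports \emph{both} discrete spectrum (it does, being a measure‑theoretic isomorphism on each fibre) \emph{and} the shape of the ergodic decomposition (it does, being an affine homeomorphism of the simplices of invariant measures, hence extreme‑point preserving), and one must use the invertible version of the realization theorem so that Proposition~\ref{prop:discrete-specturm-HWY} — which is stated for invertible t.d.s.\ — applies to convert ``discrete spectrum'' into ``bounded complexity with respect to $\{\bar d_n\}$''. It also remains to confirm, trivially, that $(Y,S)$ has no periodic points so that the realization theorem is applicable.
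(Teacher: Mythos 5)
Your proposal is correct and follows essentially the same route as the paper: build a zero-dimensional, non-periodic union of two minimal uniquely ergodic systems with discrete spectrum but different spectra, realize its assignment in a minimal system via \cite[Theorem 1]{D06} or \cite[Theorem 1]{KO06}, and apply Proposition~\ref{prop:discrete-specturm-HWY}. The only difference is cosmetic: you use the dyadic and triadic odometers where the paper uses two Sturmian shifts with distinct spectra, and you are in fact slightly more careful than the paper in checking that the realization can be taken invertible so that Proposition~\ref{prop:discrete-specturm-HWY} applies.
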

\begin{proof}
Pick two Sturmian shifts $(Y_1,\sigma)$  and $(Y_2,\sigma)$ in the full shifts
$(\{0,1\}^{\mathbb{Z}},\sigma)$  and $(\{2,3\}^{\mathbb{Z}},\sigma)$
respectively.
Then $(Y_1,\sigma)$  and $(Y_2,\sigma)$ are minimal and uniquely ergodic.
Let $\nu_1$ and $\nu_2$ be the unique invariant measure
of  $(Y_1,\sigma)$  and $(Y_2,\sigma)$ respectively.
Then both $\nu_1$ and $\nu_2$ have discrete spectrum.
We can require that the spectra of $\nu_1$ and $\nu_2$ are different and then $\nu_1$ and $\nu_2$ are not isomorphic.
Let $Y=Y_1\cup Y_2\subset \{0,1,2,3\}^{\mathbb{Z}}$.
It is clear that $Y$ is zero dimensional and
 $(Y,\sigma)$ has no periodic points.

By \cite[Theorem 1]{D06} or \cite[Theorem 1]{KO06},
there is a minimal system $(Y,S)$
 whose assignment is equivalent to that of $(Y,\sigma)$.
This means that $(X,T)$ carries exactly two ergodic measures, $\mu_1$ and $\mu_2$,
and $(X,\mathcal{B}_X,\mu_i,T)$ is isomorphic to $(Y_i,\mathcal{B}_{Y_i},\nu_i,\sigma)$.
Let $\mu=\frac{1}{2}\mu_1+\frac{1}{2}\mu_2$.
As both $\mu_1$ and $\mu_2$ have discrete spectrum, so is $\mu$.
By Proposition~\ref{prop:discrete-specturm-HWY}
$\mu$ has bounded  complexity
with respect to  $\{\bar{d}_n\}$.
But the ergodic measures in the ergodic decomposition of $\mu$ are
$\mu_1$ and $\mu_2$, which are not isomorphic.
\end{proof}

\begin{rem}
It should be noticed that the same idea of construction in Proposition~\ref{prop:Cyr-Kra} can be used to provide countably many non-isomorphic ergodic
measures in the ergodic decomposition, but the uncountably many case is still not clear.	
\end{rem}


\begin{thebibliography}{SSS}

\bibitem{A59} J. Auslander, \textit{Mean-$L$-stable systems},
Illinois J. Math., \textbf{3} (1959), 566--579.

\bibitem{BHM} F. Blanchard, B. Host and A. Maass, \textit{Topological complexity}, Ergodic Theory Dynam. Systems,
 \textbf{20} (2000),  no.~3, 641--662.

\bibitem{CK} V. Cyr and B. Kra, \textit{Nonexpansive $\mathbb{Z}^2$-subdynamics and Nivat's conjecture},
Trans. Amer. Math. Soc., \textbf{367}  (2015),  no.~9, 6487-6537.

\bibitem{CK18} V. Cyr and B. Kra, personal communication.

\bibitem{D} R. Dudley, \textit{Real Analysis and Probability}
 Cambridge University Press, Cambridge, 2002.

\bibitem{D06} T. Downarowicz, \textit{Minimal models for noninvertible and not uniquely ergodic systems}, Israel J. Math. \textbf{156} (2006), 93--110.

\bibitem{DG16}
T. Downarowicz and E. Glasner, \textit{Isomorphic extensions and applications}, Topol, Methods Nonlinear Anal. \textbf{48} (2016), 321--338.

\bibitem{EN} N. Edeko, \textit{On the isomorphism problem for non-minimal transformations with discrete spectrum}, 
Discrete \& Continuous Dynamical Systems - A, \textbf{39} (2019),
no.~10, 6001--6021.

 \bibitem{Fur61} H. Furstenberg, \textit{Strict ergodicity and transformation of the torus}, Amer. J. Math. \textbf{83} (1961), 573--601.

\bibitem{Fer} S. Ferenczi,
\textit{Measure-theoretic complexity of ergodic systems},
Israel J. Math. \textbf{100} (1997), no.~1, 189--207.

\bibitem{F51} S. Fomin, \textit{On dynamical systems with a purely point spectrum}, Dokl. Akad. Nauk SSSR,
vol. {\bf 77} (1951), 29--32 (In Russian).

\bibitem{F-1} F. Garc{\'\i}a-Ramos, \textit{A characterization of $\mu$-equicontinuity for topological dynamical systems}, Proc. Amer. Math. Soc.  \textbf{145} (2017),  no.~8, 3357--3368.

\bibitem{Felip}F. Garc{\'\i}a-Ramos,
\textit{Weak forms of topological and measure theoretical equicontinuity: relationships with
discrete spectrum and sequence entropy},
Ergodic Theory Dynam. Systems, \textbf{37} (2017), 1211--1237.


\bibitem{J-G} F. Garc{\'\i}a-Ramos and L. Jin, \textit{Mean proximality and mean Li-Yorke chaos}, Proc. Amer. Math. Soc. \textbf{145} (2017), 2959--2969.

\bibitem{HK18} B. Host and B. Kra, \textit{Nilpotent structures in ergodic theory}, Mathematical Surveys and Monographs, 236. American Mathematical Society, Providence, RI, 2018.

\bibitem{HKM-2} B. Host, B. Kra and A. Maass, 
\textit{Complexity of nilsystems and systems lacking nilfactors},
 J. Anal. Math. \textbf{124} (2014), 261--295.

\bibitem{HLY} W. Huang, P. Lu and X. Ye,
\textit{Measure-theoretical sensitivity and equicontinuity},
 Israel J. Math. \textbf{183} (2011), 233--283.

\bibitem{HWY} W. Huang, Z. Wang and X. Ye,
\textit{Measure complexity and M\"{o}bius disjointness},
Adv. Math., {\bf 347}(2019), 827--858.

\bibitem{HXU} W. Huang and L. Xu, 
\textit{Special flow, weak mixing and complexity}, 
Communications in Mathematics and Statistics, 
\textbf{7} (2019) no.~1, 85--122.

\bibitem{HXY} W. Huang, L. Xu and X. Ye,
\textit{A minimal distal map on the torus with sub-exponential measure complexity},   Ergodic Theory Dynam. Systems,
2018, published online, doi:10.1017/etds.2018.57.


\bibitem{Katok} A. Katok, \textit{Lyapunov exponents, entropy and the periodic orbits for diffeomorphisms},
Inst. Hautes études Sci. Publ. Math. No. {\bf 51} (1980), 137--173.

\bibitem{KO06} I. Kornfeld and N. Ormes,
\textit{Topological realizations of families of ergodic automorphisms, multitowers and orbit equivalence}, 
Israel J. Math. \textbf{155} (2006), 335--357.

\bibitem{Kwi8} J. Kwiatkowski, 
\textit{Classification of non-ergodic dynamical systems
with discrete spectra}, Comment. Math. Prace Mat., {\bf 22} (1981), 263--274.

\bibitem{L16} Jian Li, 
\textit{Measure-theoretic sensitivity via finite partitions},
Nonlinearity, \textbf{29} (2016), 2133--2144

\bibitem{LTY} Jian Li, S. Tu and X. Ye,
\textit{Mean equicontinuity and mean sensitivity}, 
Ergodic Theory Dynam. Systems,
\textbf{35} (2015), 2587--2612.

\bibitem{Li} Jie Li, \textit{How chaotic is an almost mean equicontinuous system?}, Discrete \& Continuous Dynamical Systems - A, \textbf{38} (2018), no.~9, 4727--4744.

\bibitem{LM} 
E. Lindenstrauss and  M. Tsukamoto, 
\textit{From rate distortion theory to metric mean dimension: variational principle},
IEEE Trans. Inform. Theory \textbf{64} (2018), no.~5, 3590--3609.

\bibitem{O52} J. C. Oxtoby, \textit{Ergodic sets},
 Bull. Amer. Math. Soc., \textbf{58} (1952), 116--136.

\bibitem{S82} B. Scarpellini,
\textit{Stability properties of flows with pure point spectrum},
J. London Math. Soc. (2)  \textbf{26} (1982), no.~3, 451--464.


\bibitem{M87} R. Ma\~n\'e, 
\textit{Ergodic theory and differentiable dynamics},
Springer-Verlag, Berlin, 1987.

\bibitem{QZ18} J. Qiu  and J. Zhao,
\textit{A note on mean equicontinuity},
J. Dyn. Diff. Equ., 2018, published online, 
doi:10.1007/s10884-018-9716-5

\bibitem{VZP12} A.~Vershik,  P.~ Zatitskiy and F.~Petrov, \textit{Geometry and dynamics of admissible metrics in measure spaces}, Cent. Eur. J. Math. 11 (2013), no.~3, 379--400.

\bibitem{Wal} P. Walters,
\textit{An introduction to ergodic theory},  Graduate Texts in Mathematics, 79. Springer-Verlag, New York-Berlin, 1982.

\bibitem{AR} A. Velozo and  R. Velozo, \textit{Rate distortion theory, metric mean dimension and measure theoretic entropy},
arXiv:1707.05762.

\bibitem{YU} T. Yu, \textit{Measure-theoretic mean equicontinuity and bounded complexity}, J. Diff. Equ., published online, 
doi:10.1016/j.jde.2019.06.017

\end{thebibliography}
\end{document}